\newcommand{\RN}[1]{%
  \textup{\uppercase\expandafter{\romannumeral#1}}%
}
\newcommand{\cprime}{'}
\theoremstyle{plain}
\newtheorem{theorem}{Theorem}[section]
\newtheorem{lemma}[theorem]{Lemma}
\newtheorem{corollary}[theorem]{Corollary}
\newtheorem{proposition}[theorem]{Proposition}
\theoremstyle{definition}
\newtheorem{definition}[theorem]{Definition}
\theoremstyle{remark}
\newtheorem{remark}[theorem]{Remark}
\providecommand{\keywords}[1]{\textbf{Key words.} #1}
\renewcommand{\div}{\text{div}}
\newcommand{\Range}{\text{Range}}
\renewcommand{\epsilon}{\varepsilon}
\newcommand{\R}{\mathbb{R}}
\numberwithin{equation}{section}
\begin{document}
\title[Gradient estimates for $p$-Laplace type equations]{Gradient estimates for singular $p$-Laplace type equations with measure data}

\author[H. Dong]{Hongjie Dong}
\address[H. Dong]{Division of Applied Mathematics, Brown University, 182 George Street, Providence, RI 02912, USA}

\email{Hongjie\_Dong@brown.edu}

\thanks{H. Dong is partially supported by the Simons Foundation, grant \# 709545.}

\author[H. Zhu]{Hanye Zhu}
\address[H. Zhu]{Division of Applied Mathematics, Brown University, 182 George Street, Providence, RI 02912, USA}

\email{Hanye\_Zhu@brown.edu}

\subjclass[2020]{35J62, 35J75, 35B65, 35R06, 31C45}

\keywords{$p$-Laplace type equations, gradient estimates, measure data, Dini continuity}

\begin{abstract}
We are concerned with interior and global gradient estimates for solutions to a class of singular quasilinear elliptic equations with measure data, whose prototype is given by the $p$-Laplace equation $-\Delta_p u=\mu$ with $p\in (1,2)$. The cases when $p\in \big(2-\frac 1 n,2\big)$ and $p\in \big(\frac{3n-2}{2n-1},2-\frac{1}{n}\big]$ were
studied in \cite{duzaar2010gradient} and \cite{nguyen2020pointwise}, respectively. In this paper, we improve the results in \cite{nguyen2020pointwise} and address the open case when $p\in \big(1,\frac{3n-2}{2n-1}\big]$. Interior and global modulus of continuity estimates of the gradients of solutions are also established.
\end{abstract}
\maketitle

\section{Introduction}
In this paper, we consider the quasilinear elliptic equation with measure data
\begin{equation}\label{eq:u}
    -\div(A(x,\nabla u))=\mu
\end{equation}
in a domain $\Omega\subset \R^n$, where $n\geq 2$. Here $\mu$ is a locally finite signed Radon measure in $\Omega$, namely, $|\mu|(B_R(x)\cap \Omega)<\infty$ for any ball $B_R(x)\subset \mathbb{R}^n$.  By setting $|\mu|(\R^n\backslash\Omega)$=0, we will always assume that $\mu$ is defined in the whole space $\R^n$. The vector field $A=(A_1,\ldots,A_n):\Omega\times \R^n\rightarrow \R^n$ is assumed to satisfy the following growth, ellipticity, and continuity conditions: there exist constants $\lambda\geq 1$, $s\geq 0$, and $p>1$ such that
\begin{equation}\label{ineq:growth}
    |A(x,\xi)|\leq\lambda(s^2+|\xi|^2)^{(p-1)/2}, \quad |D_\xi A(x,\xi)|\leq\lambda(s^2+|\xi|^2)^{(p-2)/2},
\end{equation}
\begin{equation}\label{ineq:elliptic}
    \left\langle D_\xi A(x,\xi)\eta,\eta \right\rangle\geq\lambda^{-1}(s^2+|\xi|^2)^{(p-2)/2}|\eta|^2,
\end{equation}
and
\begin{equation}\label{ineq:osi}
    |A(x,\xi)-A(x_0,\xi)|\leq\lambda\omega(|x-x_0|)(s^2+|\xi|^2)^{(p-1)/2}
\end{equation}
hold for every $x,x_0\in\Omega$ and every $(\xi,\eta)\in\mathbb{R}^n\times\mathbb{R}^n\backslash\{(0,0)\}$, and $\omega:[0,\infty)\rightarrow[0,1]$ is a
concave non-decreasing function satisfying
$$
\lim_{r\rightarrow0^+}\omega(r)=\omega(0)=0
$$
and the Dini condition
\begin{equation}\label{dini}
    \int_0^1\omega(r)\,\frac{dr}{r}<+\infty.
\end{equation}

A typical model equation is given by the (possibly nondegenerate) $p$-Laplace equation with measure data and $s\geq 0$:
\begin{equation}\label{p-eq}
    -\div\left(a(x)(|\nabla u|^2+s^2)^\frac{p-2}{2}\nabla u\right)=\mu \quad \text{in} \,\, \Omega,
\end{equation}
where $a(\cdot)$ is a Dini continuous function in $\Omega$, satisfying
\begin{equation}\label{bdd}
0<\lambda^{-1}\leq a(x)\leq \lambda
\end{equation}
and
\begin{equation}\label{dini-cts}
|a(x)-a(x_0)|\leq \lambda\omega(|x-x_0|)
\end{equation}
for every $x,x_0\in \Omega$.

By a (weak) solution to Eq. \eqref{eq:u}, we mean a function $u\in W^{1,p}_{\text{loc}}(\Omega)$ such that
the distributional relation
$$\int_{\Omega}\langle A(x,\nabla u),D\varphi\rangle\, dx=\int_{\Omega}\varphi \;d\mu$$
holds whenever $\varphi\in C^\infty_0(\Omega)$ has compact support in $\Omega$. We use $B_R(x)$ to denote the open ball of radius $R$ centered at $x$ and we set
$$
B_R=B_R(0),\quad
\Omega_R(x)=\Omega\cap B_R(x).
$$

The gradient estimates for the super-quadratic case when $p\geq 2$ were well studied in the literature. See \cite{MR2729305,MR2823872,MR2746772,MR3004772}. However, the corresponding results for the singular case when $p\in(1,2)$ are far from complete.

In this paper, we are concerned with only the singular case when $p\in (1,2)$.

For singular quasi-linear equations, the case when $p\in\big(2-\frac{1}{n},2\big)$ was considered in the pioneering work \cite{duzaar2010gradient}, in which the authors proved that under the conditions \eqref{ineq:growth}-\eqref{dini}, if $u\in C^1(\Omega)$ solves (\ref{eq:u}), then it holds that
\begin{align*}
    |\nabla u(x)|\leq C\big[\mathbf{I}_1^R(|\mu|)(x)\big]^\frac{1}{p-1}+C\fint_{B_R(x)}(|\nabla u(y)|+s)\,dy
\end{align*}
for every ball $B_R(x)\subset\Omega$ with $R\in (0,1]$, where $C=C(n,p,\lambda,\omega)$. Here $\fint_E$ stands for the integral average over a measurable set $E$ and
\begin{equation}\label{def:riesz}
    \mathbf{I}_1^R(|\mu|)(x)=\int_0^R\frac{|\mu|(B_t(x))}{t^{n-1}}\,\frac{dt}{t}
\end{equation}
is the truncated first-order Riesz potential.
Later, the case when $p\in \big(\frac{3n-2}{2n-1},2-\frac{1}{n}\big]$ was treated in \cite{nguyen2020pointwise}, in which the authors obtained a pointwise gradient bound involving the Wolff potential under stronger assumptions on $A$ and $\omega$.  Namely, under the conditions \eqref{ineq:growth}-\eqref{ineq:osi} and further assuming that
\begin{equation}
                        \label{eq3.31}
\begin{aligned}
&|D_\xi A(x,\xi)-D_{\xi} A(x,\eta)|\\
&\leq \lambda (s^2+|\xi|^2)^{(p-2)/2}(s^2+|\eta|^2)^{(p-2)/2} (s^2+|\xi|^2+|\eta|^2)^{(2-p-\alpha_0)/2}|\xi-\eta|^{\alpha_0}
\end{aligned}
\end{equation}
and
$$
\int_0^1\omega^\gamma(r)\,\frac{dr}{r}<+\infty
$$
for some $\alpha_0\in(0,2-p)$ and $\gamma\in \Big(\frac{n}{2n-1},\frac{n(p-1)}{n-1}\Big)\subset (0,1)$, if $u\in C^1(\Omega)$ is a solution to (\ref{eq:u}), then it holds that
\begin{align*}
    |\nabla u(x)|\leq C
    [\mathbf{P}_{\gamma}^R(|\mu|)(x)]^\frac{1}{\gamma (p-1)}+C\Big(\fint_{B_R(x)}(|\nabla u(y)|+s)^{\gamma}\,dy\Big)^\frac{1}{\gamma},
\end{align*}
where $C=C(n,p,\lambda,\alpha_0,\omega,\gamma)$ and
$$
\mathbf{P}_{\gamma}^R(|\mu|)(x)=\int_0^R
    \Big(\frac{|\mu|(B_t(x))}{t^{n-1}}\Big)^{\gamma}\,\frac{dt}{t}
$$
is a truncated nonlinear Wolff potential. We recall that in general, the truncated Wolff potential is defined as
\begin{equation}\label{def:wolff}
   \mathbf{W}_{\beta,p}^R(|\mu|)(x)=\int_0^R\left(\frac{|\mu|(B_t(x))}{t^{n-\beta p}}\right)^\frac{1}{p-1}\,\frac{dt}{t}, \quad \beta\in(0,n/p].
\end{equation}

Our first main result is stated as follows.
\begin{theorem}[Interior pointwise gradient estimate]\label{thm:int}
Let $p\in \big(\frac{3n-2}{2n-1},2\big)$ and suppose that $u\in W^{1,p}_{\text{loc}}(\Omega)$ is a solution to \eqref{eq:u}. Then under the assumptions \eqref{ineq:growth}-\eqref{dini}, there exists a constant $C=C(n, p, \lambda,\omega)$ such that the estimate
\begin{equation}\label{ineq:int}
    |\nabla u(x)|\leq C\big[\mathbf{I}_1^R(|\mu|)(x)\big]^\frac{1}{p-1}+C\left(\fint_{B_R(x)}(|\nabla u(y)|+s)^{2-p} \,dy\right)^\frac{1}{2-p}
\end{equation}
holds for any  Lebesgue point $x$ of the vector-valued function $\nabla u$, with $B_R(x)\subset\Omega$ and $R\in (0, 1]$.
\end{theorem}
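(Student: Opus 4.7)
The plan is to adapt the dyadic comparison-and-iteration scheme of \cite{duzaar2010gradient} to the whole singular range $p\in((3n-2)/(2n-1),2)$ by replacing the $L^1$ mean of the gradient that appears there with its sub-linear $L^{2-p}$ counterpart. By a standard density argument (regularise both $A$ and $\mu$, prove the estimate for the resulting smooth problem with constants independent of the regularisation, and pass to the limit), it suffices to establish \eqref{ineq:int} under the extra hypothesis $u\in C^1(\Omega)$; the bound at a Lebesgue point $x$ of $\nabla u$ then follows by Lebesgue differentiation.

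Fixing such an $x$ and a small $r>0$, I would compare $u$ on $B_r(x)$ with the solution $v=v_r$ of the frozen homogeneous problem
$$
-\div(A(x,\nabla v))=0\text{ in }B_r(x),\qquad v=u\text{ on }\partial B_r(x).
$$
The first key ingredient is the comparison estimate
$$
\left(\fint_{B_r(x)}|\nabla u-\nabla v|^{2-p}\right)^{\frac{1}{2-p}}\leq C\left[\frac{|\mu|(B_r(x))}{r^{n-1}}\right]^{\frac{1}{p-1}}+C\,\omega(r)\!\left(\fint_{B_r(x)}(|\nabla u|+s)^{2-p}\right)^{\frac{1}{2-p}}.
$$
This is obtained by testing the equation for $u-v$ against a suitable truncation of $u-v$, using the monotonicity of $A$ together with \eqref{ineq:osi} to produce the natural weighted energy $\int(s^2+|\nabla u|^2+|\nabla v|^2)^{(p-2)/2}|\nabla u-\nabla v|^2$, and then converting it into an $L^{2-p}$ bound via Hölder combined with the Sobolev inequality applied to $u-v$. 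The Sobolev step is the critical one: it forces exactly $2-p<n(p-1)/(n-1)$, i.e.\ $p>(3n-2)/(2n-1)$, which is the hypothesis of the theorem.

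The second ingredient is the regularity side for the frozen homogeneous equation: standard DiBenedetto-type results in the singular range yield the local Lipschitz bound and the oscillation decay
$$
\sup_{B_{\tau r}(x)}|\nabla v|\leq c\left(\fint_{B_r(x)}(|\nabla v|+s)^{2-p}\right)^{\frac{1}{2-p}},\qquad \mathrm{osc}_{B_{\tau r}(x)}\nabla v\leq c\,\tau^\alpha\!\left(\sup_{B_r(x)}|\nabla v|+s\right)
$$
for some $\alpha=\alpha(n,p,\lambda)>0$ and every $\tau\in(0,1/2)$. Combining this decay for $\nabla v$ with the comparison estimate and the triangle inequality yields a one-step inequality for the $(2-p)$-excess
$$
E(x,r):=\left(\fint_{B_r(x)}(|\nabla u|+s)^{2-p}\right)^{\frac{1}{2-p}}+s
$$
of the form
$$
E(x,\tau r)\leq C\tau^\alpha E(x,r)+C\,\tau^{-n/(2-p)}\!\left[\frac{|\mu|(B_r(x))}{r^{n-1}}\right]^{\frac{1}{p-1}}+C\,\omega(r)\,E(x,r).
$$

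Finally I would choose $\tau$ small enough to absorb the $C\tau^\alpha$-term upon iteration, and iterate along the dyadic scales $r_j=\tau^j R$. Because $1/(p-1)\geq 1$ for $p\leq 2$, the embedding $\ell^1\subset\ell^{1/(p-1)}$ turns $\sum_j[|\mu|(B_{r_j})/r_j^{n-1}]^{1/(p-1)}$ into $C[\mathbf{I}_1^R(|\mu|)(x)]^{1/(p-1)}$, while the Dini corrections $\sum_j\omega(r_j)E(x,r_j)$ are summable by \eqref{dini} together with a discrete Grönwall-type argument; what remains is $E(x,R)$, which is exactly the second term on the right-hand side of \eqref{ineq:int}. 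Lebesgue differentiation at $x$ converts $\lim_{j\to\infty}E(x,r_j)$ into $|\nabla u(x)|$ and delivers the desired bound. The hard part will be the sharp $L^{2-p}$ comparison step: extracting the sub-linear $L^{2-p}$ norm from the weighted $L^2$ energy at the borderline Sobolev exponent $2-p$ (rather than at a smaller $\gamma$, as in \cite{nguyen2020pointwise}, which would require the stronger structural assumption \eqref{eq3.31}) is precisely what makes $p>(3n-2)/(2n-1)$ the exact threshold for the theorem.
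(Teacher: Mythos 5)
You correctly identify the overall comparison-and-iteration framework, the role of the $L^{2-p}$ scale, and that the threshold $p > (3n-2)/(2n-1)$ arises precisely from the Sobolev step in the comparison estimate (equivalently, from requiring $2-p < n(p-1)/(n-1)$). However, the central step of your iteration is wrong, and a key technical ingredient is missing.

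\textbf{The quantity you iterate does not decay.} You define $E(x,r)=(\fint_{B_r}(|\nabla u|+s)^{2-p})^{1/(2-p)}+s$ and assert a one-step inequality of the form $E(x,\tau r)\le C\tau^\alpha E(x,r)+\dots$. This cannot hold: $E(x,r)$ is a full $L^{2-p}$ average, not an oscillation, so as $r\to 0$ it converges to $|\nabla u(x)|+2s$ at every Lebesgue point with $\nabla u(x)\neq 0$, and the remaining terms on the right vanish as $r\to 0$ whenever $\mathbf{I}_1(|\mu|)(x)<\infty$; thus for small $\tau$ the inequality would force $|\nabla u(x)|+2s\le C\tau^\alpha(|\nabla u(x)|+2s)$, a contradiction. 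The quantity that actually decays with the exponent $\alpha$ coming from the homogeneous regularity theory is the \emph{excess}
$\phi(x,\rho)=\inf_{\mathbf{q}\in\R^n}\big(\fint_{B_\rho(x)}|\nabla u-\mathbf{q}|^{\gamma_0}\big)^{1/\gamma_0}$ with $\gamma_0=2-p$, not the norm $E$. The paper (Proposition \ref{prop1}) derives the one-step inequality for $\phi$, and in the iteration one must also track the minimizing vectors $\mathbf{q}_{x_0,r_j}$, prove $|\mathbf{q}_{j+1}-\mathbf{q}_j|\le C(\phi_j+\phi_{j+1})$, and finally use the Lebesgue-point identity $\mathbf{q}_{x_0,\rho}\to\nabla u(x_0)$. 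Your sketch has none of this bookkeeping, and without it the $\tau^\alpha$-absorption simply does not close.

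\textbf{The sub-linear oscillation decay is not a standard DiBenedetto estimate.} You invoke ``standard DiBenedetto-type results'' to get the decay you need, but what is actually required is a decay of the $L^{\gamma_0}$-mean oscillation for $\gamma_0=2-p\in(0,1)$, namely $\inf_{\mathbf q}(\fint_{B_\rho}|\nabla v-\mathbf q|^{\gamma_0})^{1/\gamma_0}\le C(\rho/R)^\alpha\inf_{\mathbf q}(\fint_{B_R}|\nabla v-\mathbf q|^{\gamma_0})^{1/\gamma_0}$. The classical references give the $L^p$- (or $L^2$-) mean oscillation decay, and lowering the exponent below $1$ is not a direct consequence of H\"older because $\|\cdot\|_{L^{\gamma_0}}$ is not a norm; one must first deduce a local $L^\infty$ bound for $\nabla v-\mathbf q_{\rho/2}$ against the $L^{\gamma_0}$-oscillation (via Young's inequality and an iteration on shrinking annuli, as in the proof of Theorem \ref{thm:osc}), and then go back down. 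This is the genuinely new technical lemma (Theorem \ref{thm:osc}) that makes the argument run, and it is absent from your proposal. Likewise, the conversion of the weighted energy from the test-function step into an $L^{2-p}$ comparison bound (Lemma \ref{lem:u-w}) goes through weak-$L^q$ (Marcinkiewicz) estimates rather than a single H\"older--Sobolev step, so ``via H\"older combined with Sobolev'' glosses over the delicate part. Finally, the paper uses a two-stage comparison (first $u\to w$ with $-\div(A(x,\nabla w))=0$, then $w\to v$ with frozen coefficients), whereas you propose a single-stage comparison; this is a minor organizational difference and could in principle be made to work, but it is the iteration quantity and the sub-linear oscillation estimate that are the real gaps.
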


\begin{remark}
Our pointwise bound in Theorem \ref{thm:int} using Riesz potential $\mathbf{I}_1^R(|\mu|)$ is  an improvement of the bound in \cite[Theorem 1.1]{nguyen2020pointwise} which contains the Wolff potential $\mathbf{P}_{\gamma}^R(|\mu|)$, since
$$
\mathbf{I}_1^R(|\mu|)\leq C\; \mathbf{P}_{\gamma}^{2R}(|\mu|)^\frac{1}{\gamma}\quad \forall\, \gamma<1.
$$
The conditions on $\omega$ and $A$ in Theorem \ref{thm:int} are also weaker. In particular, \eqref{eq3.31} is not assumed.
\end{remark}

For the more singular case when $p\in \big(1,\frac{3n-2}{2n-1}\big]$, which was open, we obtain the following Lipschitz estimate.
\begin{theorem}[Interior Lipschitz estimate]\label{thm:int:lip}
Let $p\in(1,2)$ and suppose that $u\in W^{1,p}_{\text{loc}}(\Omega)$ is a solution to \eqref{eq:u}. Then under the assumptions \eqref{ineq:growth}-\eqref{dini}, there exists a constant $C=C(n, p, \lambda,\omega)$ such that the estimate
\begin{equation}\label{ineq:int:lip}
   \|\nabla u\|_{L^\infty(B_{R/2}(x))}
         \leq C \big\|\mathbf{I}_1^R(|\mu|)\big\|^\frac{1}{p-1}_{L^\infty(B_{R}(x))}+C R^{-\frac{n}{2-p}} \||\nabla u|+s\|_{L^{2-p}(B_{R}(x))}
\end{equation}
holds for any $B_R(x)\subset\Omega$ and $R\in (0, 1]$.
\end{theorem}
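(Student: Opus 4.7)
Since $\nabla u\in L^p_{\mathrm{loc}}(\Omega)$, almost every $y\in B_{R/2}(x)$ is a Lebesgue point of $\nabla u$, so \eqref{ineq:int:lip} will follow from a pointwise bound at each such $y$ upon taking the essential supremum. The argument naturally splits according to the two regimes of $p$.

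For $p\in\big(\frac{3n-2}{2n-1},2\big)$ the estimate is a direct corollary of Theorem \ref{thm:int} applied at $y$ with radius $R/2$: since $y\in B_{R/2}(x)$ implies $B_{R/2}(y)\subset B_R(x)$, the Riesz potential at $y$ is bounded by $\|\mathbf I_1^R(|\mu|)\|_{L^\infty(B_R(x))}$, and the $(2-p)$-average over $B_{R/2}(y)$ is bounded by $CR^{-n}\int_{B_R(x)}(|\nabla u|+s)^{2-p}\,dz$. Raising to the power $1/(2-p)$ produces exactly the second term of \eqref{ineq:int:lip}.

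For the genuinely open regime $p\in\big(1,\frac{3n-2}{2n-1}\big]$, Theorem \ref{thm:int} is unavailable and I would run a scale iteration at $y$. Fix $\tau\in(0,1)$, set $\rho_k=\tau^k(R/2)$, and on each $B_{\rho_k}(y)$ introduce the frozen-coefficient $A$-harmonic replacement $v_k$ defined as the weak solution of $-\div(A(y,\nabla v_k))=0$ in $B_{\rho_k}(y)$ with $v_k=u$ on $\partial B_{\rho_k}(y)$. Two ingredients combine. First, classical DiBenedetto-type theory for homogeneous $A$-harmonic maps yields, for any $q>0$, the sup-estimate $\|\nabla v_k\|_{L^\infty(B_{\rho_k/2}(y))}\le C\big(\fint_{B_{\rho_k}(y)}(|\nabla v_k|+s)^{q}\,dz\big)^{1/q}$ together with a H\"older-type decay of the oscillation of $\nabla v_k$ in $L^{2-p}$. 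Second, a comparison estimate of the form
$$\Big(\fint_{B_{\rho_k}(y)}|\nabla u-\nabla v_k|^{2-p}\,dz\Big)^{\frac{1}{2-p}}\le C\Big(\frac{|\mu|(B_{\rho_k}(y))}{\rho_k^{n-1}}\Big)^{\frac{1}{p-1}}+C\,\omega(\rho_k)\Big(\fint_{B_{\rho_k}(y)}(|\nabla u|+s)^{2-p}\,dz\Big)^{\frac{1}{2-p}}$$
is obtained by testing $u-v_k$ against itself in the two weak formulations and exploiting the monotonicity in \eqref{ineq:elliptic} together with the Dini oscillation bound \eqref{ineq:osi}. Telescoping the resulting excess decay along the geometric sequence $\rho_k$ converts the measure contributions into $\mathbf I_1^{R/2}(|\mu|)(y)^{1/(p-1)}$ (or, in the worst case, its $L^\infty(B_R(x))$-norm), the Dini errors sum by \eqref{dini}, and the initial scale contributes precisely $R^{-n/(2-p)}\||\nabla u|+s\|_{L^{2-p}(B_R(x))}$.

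\textbf{Main obstacle.} The technical core is the comparison inequality in $L^{2-p}$ rather than the customary $L^p$. When $p\le\frac{3n-2}{2n-1}$ the standard $L^p$ energy comparison produces too weak an exponent on $|\mu|(B_\rho)/\rho^{n-1}$, and only closes into a Wolff potential (as in \cite{nguyen2020pointwise}); obtaining the sharp exponent $1/(p-1)$ that matches the first-order Riesz potential appears to require combining a reverse-H\"older self-improvement for $\nabla u-\nabla v_k$ with the precise form of \eqref{ineq:elliptic}, and carefully balancing the Dini oscillation term against the excess decay of $\nabla v_k$ so that the scale-to-scale iteration remains contractive for a suitably small choice of $\tau$. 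This last balancing is where the assumption $p<2$ is fully consumed.
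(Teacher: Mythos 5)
Your reduction of the case $p\in\big(\tfrac{3n-2}{2n-1},2\big)$ to Theorem \ref{thm:int} is correct, but that is not the content of Theorem \ref{thm:int:lip}; the new case is $p\in\big(1,\tfrac{3n-2}{2n-1}\big]$, and here your proposal has a genuine gap at its center. The comparison estimate you write down, namely
\begin{equation*}
\Big(\fint_{B_{\rho}(y)}|\nabla u-\nabla v|^{2-p}\,dz\Big)^{\frac{1}{2-p}}\le C\Big(\frac{|\mu|(B_{\rho}(y))}{\rho^{n-1}}\Big)^{\frac{1}{p-1}}+C\,\omega(\rho)\Big(\fint_{B_{\rho}(y)}(|\nabla u|+s)^{2-p}\,dz\Big)^{\frac{1}{2-p}},
\end{equation*}
is precisely the estimate that is \emph{not} available when $p\le\tfrac{3n-2}{2n-1}$. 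In Lemma \ref{lem:u-w} the left-hand $L^{\gamma_0}$ exponent is constrained to $\gamma_0<\tfrac{(p-1)n}{n-1}$, which is strictly less than $2-p$ in this range (equality $\tfrac{(p-1)n}{n-1}=2-p$ occurs exactly at $p=\tfrac{3n-2}{2n-1}$). The obstruction appears in the truncation/Sobolev step of the comparison proof: one needs $\tfrac{n}{2(n-1)}>\tfrac{2-p}{2(p-1)}$ to close the weak-Lebesgue self-bound for $u-w$, and this inequality fails once $p\le\tfrac{3n-2}{2n-1}$. Your own ``Main obstacle'' paragraph acknowledges the issue, but the proposed remedies (reverse H\"older self-improvement for $\nabla u-\nabla v$, balancing the Dini term) do not repair the exponent on $|\mu|(B_\rho)/\rho^{n-1}$; a pointwise scale iteration at a single $y$ driven by an $L^{2-p}$ comparison would in fact prove the full pointwise estimate of Theorem \ref{thm:int} for all $p\in(1,2)$, which the paper explicitly argues is out of reach.

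What the paper does instead is structurally different. It keeps the achievable comparison exponent $\gamma_0<2-p$ on the \emph{left}, while the right-hand side of Lemma \ref{lem:u-w} still carries the $(2-p)$-average of $|\nabla u|+s$; this mismatch means the iterated oscillations $\phi(x,\rho)$ cannot be closed pointwise against themselves, so the $(2-p)$-average is dominated by $\|\nabla u\|_{L^\infty}$ on a slightly larger ball. The loop is then closed not by a pointwise telescoping at $y$, but by a Moser-type absorption over the nested family $\rho_k=(1-2^{-k})R$: one multiplies the resulting inequality by $3^{-nk/\gamma_0}$, sums in $k$, and subtracts the convergent tail, producing \eqref{infty-norm1}--\eqref{infty-norm2}. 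This absorption is what replaces the unavailable sharp $L^{2-p}$ comparison and is the key new idea you are missing. Separately, the paper's proof also contains a nontrivial approximation step (Step 2) to pass from $C^1$ solutions to general $W^{1,p}_{\mathrm{loc}}$ solutions, which your proposal does not address, though that is a more standard matter.
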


We also obtain a modulus of continuity estimate of $\nabla u$ in Theorem \ref{thm4.1}, which directly implies the following sufficient condition for the continuity of $\nabla u$.

\begin{theorem}[Gradient continuity via Riesz potential]\label{thm1.2}
Let $p\in(1,2)$ and $u\in W^{1,p}_{\text{loc}}(\Omega)$ be a solution to \eqref{eq:u}. Assume that \eqref{ineq:growth}-\eqref{dini} are satisfied and the functions
\begin{equation}\label{asp1}
x\;\to \mathbf{I}_1^R(|\mu|)(x) \text{ converge locally uniformly to zero in } \Omega \text{ as } R\;\to\;0.
\end{equation}
Then $\nabla u$ is continuous in $\Omega$.
\end{theorem}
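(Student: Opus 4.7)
The plan is to derive Theorem~\ref{thm1.2} as a direct corollary of the quantitative modulus-of-continuity estimate asserted in Theorem~\ref{thm4.1}, via a short compactness argument. Although the precise form of Theorem~\ref{thm4.1} is not reproduced in the excerpt, its function must be to bound the oscillation of $\nabla u$ on small balls in terms of three independently controllable quantities: a modulus built from $\omega$ through the Dini condition \eqref{dini}, a power of the truncated Riesz potential $\mathbf{I}_1^R(|\mu|)$, and a nonlocal average of $\nabla u$ of the type already appearing in Theorems~\ref{thm:int} and \ref{thm:int:lip}. Granting such a bound, the continuity conclusion follows by making each of these quantities small in turn.

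More precisely, I would fix an arbitrary compact set $K\subset\subset\Omega$ and choose $R_0\in(0,1]$ so that $B_{R_0}(x)\subset\Omega$ for every $x\in K$. For $x_0\in K$ and $r<R\le R_0/2$, Theorem~\ref{thm4.1} is expected to yield an estimate of the schematic form
\[
\operatorname{osc}_{B_r(x_0)}\nabla u \;\le\; C\,\Phi(r/R)\,M(x_0,R)\;+\;C\,\Bigl[\sup_{z\in B_R(x_0)}\mathbf{I}_1^{R}(|\mu|)(z)\Bigr]^{1/(p-1)}\;+\;C\,\tilde\omega(R),
\]
where $\Phi(t)\to 0$ as $t\to 0^+$, $\tilde\omega$ is the Dini-type modulus obtained from $\omega$, and $M(x_0,R)$ is controlled uniformly on $K$ by the local $L^p$ integrability of $\nabla u$ (or, in the most singular range, by its local $L^{2-p}$ bound as in Theorem~\ref{thm:int:lip}).

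Given $\varepsilon>0$, I would first invoke the hypothesis \eqref{asp1} together with the Dini condition to choose a single $R\in(0,R_0/2]$ for which
\[
\sup_{x\in K}\,\sup_{z\in B_R(x)}\mathbf{I}_1^{R}(|\mu|)(z) \;+\; \tilde\omega(R) \;<\; \varepsilon.
\]
With this $R$ fixed, the first term in the oscillation bound above can be made smaller than $\varepsilon$ by taking $r/R$ small enough, uniformly for $x_0\in K$, since $M(x_0,R)$ is bounded on $K$. The displayed estimate then gives $\operatorname{osc}_{B_r(x_0)}\nabla u \le C\,\varepsilon^{\min(1,\,1/(p-1))}$ uniformly on $K$, which is precisely uniform continuity of $\nabla u$ on $K$; since $K\subset\subset\Omega$ was arbitrary, $\nabla u$ is continuous in $\Omega$.

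The real technical content lies in Theorem~\ref{thm4.1} itself, where one must produce a three-term decomposition of the oscillation that genuinely separates the scale parameter from the Riesz potential and from the coefficient modulus. In the singular regime $p\in(1,2)$ this separation is delicate: the exponent $1/(p-1)>1$ amplifies small quantities, and the nonlocal $L^{2-p}$ average inherited from Theorem~\ref{thm:int:lip} must be shown to remain uniformly bounded---not merely finite---as one iterates down in scale. Once Theorem~\ref{thm4.1} is in hand, however, the deduction of Theorem~\ref{thm1.2} above is essentially automatic and constitutes no further obstacle.
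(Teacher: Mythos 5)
Your proposal is correct and is essentially the paper's proof: it deduces continuity from the modulus-of-continuity bound of Theorem~\ref{thm4.1} by showing that every term on its right-hand side vanishes as the scale shrinks, using \eqref{asp1} for the potential terms and the Dini condition for the coefficient modulus, together with density of Lebesgue points of $\nabla u$. The only cosmetic difference is that the actual estimate \eqref{eq7.10} evaluates the (tilde-modified) truncated potentials at the small scale $\rho=|x-y|$ rather than at the outer radius $R$, so the paper keeps $R$ fixed and sends $\rho\to 0$ directly (invoking \eqref{asp1} and dominated convergence), whereas your imagined schematic form requires the intermediate step of first shrinking $R$; both routes reach the same conclusion.
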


Recall the Lorentz space $L^{n,1}$ is the collection of measurable functions $f$ such that
$$
\int_0^\infty |\{x:\,|f(x)|\ge t\}|^{\frac 1 n}\,{dt}<\infty.
$$
Theorem \ref{thm1.2} has the following corollary.

\begin{corollary}[Gradient continuity via Lorentz spaces]\label{thm1.3}
Let $p\in(1,2)$ and $u\in W^{1,p}_{\text{loc}}(\Omega)$ be a solution to \eqref{eq:u}. Assume that
\eqref{ineq:growth}-\eqref{dini} are satisfied and
\begin{equation}\label{asp2}
  \mu\in L^{n,1} \text{ holds locally in }\Omega.
\end{equation}
Then $\nabla u$ is continuous in $\Omega$.
\end{corollary}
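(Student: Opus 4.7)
The plan is to reduce Corollary~\ref{thm1.3} to Theorem~\ref{thm1.2} by verifying that the Lorentz hypothesis \eqref{asp2} implies the local uniform vanishing hypothesis \eqref{asp1}. All the work is then a purely real-variable estimate relating the truncated Riesz potential $\mathbf{I}_1^R$ to the $L^{n,1}$ norm of the density.

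First, I fix a compact set $K\subset\Omega$ and choose an open set $U$ with $K\Subset U\Subset\Omega$. Assumption \eqref{asp2} is interpreted in the usual sense that $\mu$ is locally absolutely continuous with density $f$ in $L^{n,1}(U)$; I extend $f$ by zero to $\R^n$. For any $R_0\le \operatorname{dist}(K,\partial U)$, every $x\in K$, and every $t\le R_0$, the ball $B_t(x)$ lies in $U$, so $|\mu|(B_t(x))=\int_{B_t(x)}|f|\,dy$. The Hardy--Littlewood rearrangement inequality gives
\[
|\mu|(B_t(x))\le \int_0^{\omega_n t^n} f^{\ast}(s)\,ds,
\]
where $\omega_n=|B_1|$ and $f^{\ast}$ denotes the decreasing rearrangement of $|f|$.

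The second step is a Fubini computation. Substituting the above into the definition of $\mathbf{I}_1^R(|\mu|)(x)$ and swapping the order of integration,
\[
\mathbf{I}_1^R(|\mu|)(x)=\int_0^R \frac{|\mu|(B_t(x))}{t^{n}}\,dt\le \int_0^{\omega_n R^n} f^{\ast}(s)\int_{(s/\omega_n)^{1/n}}^{R}\frac{dt}{t^{n}}\,ds\le C(n)\int_0^{\omega_n R^n} f^{\ast}(s)\,s^{\frac{1}{n}-1}\,ds,
\]
where in the last inequality I used $n\ge 2$ to bound the inner integral. The right-hand side is independent of $x\in K$, and is exactly (up to a multiplicative constant) the tail of the Lorentz-norm integral $\|f\|_{L^{n,1}}\simeq \int_0^\infty f^{\ast}(s)\,s^{1/n-1}\,ds$.

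Since $f\in L^{n,1}(\R^n)$, the full integral is finite, and dominated convergence yields that the tail vanishes as $R\to 0^+$. Thus $\sup_{x\in K}\mathbf{I}_1^R(|\mu|)(x)\to 0$, which is exactly \eqref{asp1} on $K$. Because $K$ was an arbitrary compact subset of $\Omega$, Theorem~\ref{thm1.2} applies and yields continuity of $\nabla u$ throughout $\Omega$. There is no real obstacle here: the only items requiring minor care are the interpretation of \eqref{asp2} as absolute continuity with an $L^{n,1}$ density, and matching the weight $s^{1/n-1}$ produced by the rearrangement/Fubini estimate with the standard definition of the Lorentz norm.
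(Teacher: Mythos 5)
Your proof is correct and follows the same overall route as the paper: both reduce the corollary to Theorem~\ref{thm1.2} by showing that \eqref{asp2} implies the local uniform vanishing \eqref{asp1} of the truncated Riesz potential. The only difference is that the paper simply cites \cite[Lemma~3]{MR2729305} (with $p=2$, $k=1$) for this implication, while you reprove it from scratch via the Hardy--Littlewood rearrangement inequality and Fubini; your computation is accurate and is in fact essentially the proof of the cited lemma.
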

We remark that the Lorentz-space result above was proved in \cite{MR3247381} for a $p$-Laplacian system similar to \eqref{p-eq} when $p\in (1,\infty)$.

A further, actually immediate, corollary of Theorem \ref{thm1.2} concerns measures with certain density properties.
\begin{corollary}[Gradient continuity via density]\label{thm1.4}
Let $p\in(1,2)$ and $u\in W^{1,p}_{\text{loc}}(\Omega)$ be a solution to \eqref{eq:u}. Assume that \eqref{ineq:growth}-\eqref{dini} are satisfied and $\mu$ satisfies
\begin{equation}\label{asp3}
|\mu|(B_{\rho}(x))\leq C \rho^{n-1}h(\rho)
\end{equation}
for every ball $B_{\rho}(x)\subset\subset\Omega$, where $C$ is a positive constant and $h:[0,\infty)\to[0,\infty)$ is a function satisfying the Dini condition
\begin{equation}\label{asp4}
\int_0^R h(r)\;\frac{dr}{r}<\infty \text{ for some }R>0.
\end{equation}
Then $\nabla u$ is continuous in $\Omega$.
\end{corollary}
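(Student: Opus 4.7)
My plan is to obtain Corollary \ref{thm1.4} as an immediate consequence of Theorem \ref{thm1.2}. The only thing to verify is that the density condition \eqref{asp3}--\eqref{asp4} implies the local uniform decay \eqref{asp1} of the truncated Riesz potential $\mathbf{I}_1^R(|\mu|)$.

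First, I would fix an arbitrary compact set $K\subset \Omega$ and choose $R_0\in (0,1]$ small enough that $B_{R_0}(x)\subset\subset \Omega$ for every $x\in K$. For any such $x$ and every $t\in (0,R_0)$, the hypothesis \eqref{asp3} applies to the ball $B_t(x)$ with a constant $C$ that may depend on $K$ but not on the particular $x\in K$, so
$|\mu|(B_t(x))\le Ct^{n-1}h(t)$. Substituting this estimate into the definition \eqref{def:riesz} yields
$$
\mathbf{I}_1^R(|\mu|)(x)=\int_0^R \frac{|\mu|(B_t(x))}{t^{n-1}}\,\frac{dt}{t}\le C\int_0^R h(t)\,\frac{dt}{t}
$$
for all $x\in K$ and $R\in (0,R_0]$. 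The right-hand side is independent of $x\in K$ and, by the Dini condition \eqref{asp4}, tends to zero as $R\to 0^+$. This is exactly the locally uniform decay required in \eqref{asp1}, so Theorem \ref{thm1.2} then forces $\nabla u$ to be continuous in $\Omega$.

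No substantive obstacle is expected: the corollary reduces, via Theorem \ref{thm1.2}, to the elementary tail-integral estimate displayed above. The only bookkeeping care is to localize on compact subsets of $\Omega$ so that the ambient requirement $B_\rho(x)\subset\subset\Omega$ in \eqref{asp3} is met with a uniform radius $R_0$ and a uniform constant $C$; this is immediate by compactness.
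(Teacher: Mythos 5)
Your proof is correct and follows the paper's approach exactly: the paper simply declares the corollary immediate from Theorem \ref{thm1.2} because \eqref{asp3}--\eqref{asp4} imply \eqref{asp1}, and your computation spells out the straightforward substitution of the density bound into the definition of $\mathbf{I}_1^R(|\mu|)$ that makes this implication obvious.
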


We remark that Theorem \ref{thm1.2} and Corollaries \ref{thm1.3} and \ref{thm1.4} above are indeed the sub-quadratic --$p\in(1,2)$-- counterparts of \cite[Theorem 1, 3, 4]{MR2729305}. See also \cite[Theorem 1.5 and Corollaries 1.6 and 1.7]{MR3004772}. We refer the reader to \cite{MR2729305} for a discussion of
the borderline nature of the assumptions in these results.

Another interesting consequence of Theorem \ref{thm4.1} is the following gradient H\"older continuity result.

\begin{corollary}[Gradient H\"older continuity via Riesz potential]\label{thm1.8}
Let $p\in(1,2)$ and $u\in W^{1,p}_{\text{loc}}(\Omega)$ be a solution to \eqref{eq:u}. Then under the assumptions \eqref{ineq:growth}-\eqref{dini}, there exists a constant $\alpha\in(0,1)$ depending only on $n$, $p$, and $\lambda$, such that if $\omega(r)\leq C r^{\beta}$ whenever $r>0$ and
$\mathbf{I}_1^{\rho}(|\mu|)(x)\leq C \rho^{\beta}$ whenever $B_{\rho}(x)\subset\subset\Omega$,
for some constants $C>0$ and $\beta\in (0,\alpha)$, then $u\in C^{1,\beta}_{\text{loc}}(\Omega)$.
\end{corollary}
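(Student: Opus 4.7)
The plan is to deduce this corollary directly from the quantitative modulus-of-continuity estimate produced in Theorem \ref{thm4.1}. That theorem gives, at any Lebesgue point $x_0\in\Omega'\Subset\Omega$ and for sufficiently small $r>0$, a pointwise bound of the schematic form
\begin{equation*}
\mathrm{osc}_{B_r(x_0)}\nabla u\leq \Psi\bigl(r;\omega,\mathbf{I}_1^{\cdot}(|\mu|)(x_0)\bigr),
\end{equation*}
where $\Psi$ is an explicit functional built from integrals of $\omega$ and of the truncated Riesz potential. Substituting the power-law hypotheses $\omega(\rho)\leq C\rho^{\beta}$ and $\mathbf{I}_1^{\rho}(|\mu|)(x_0)\leq C\rho^{\beta}$ collapses every such integral to a constant multiple of $r^{\beta}$, producing a modulus of order $r^{\beta}$ uniformly in $x_0\in\Omega'$. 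The Campanato characterization of H\"older continuity then upgrades this to $\nabla u\in C^{0,\beta}_{\mathrm{loc}}(\Omega)$, which is exactly $u\in C^{1,\beta}_{\mathrm{loc}}(\Omega)$.

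The threshold exponent $\alpha=\alpha(n,p,\lambda)\in(0,1)$ is the intrinsic gradient H\"older exponent of the frozen, homogeneous problem $-\div(A(x_0,\nabla w))=0$, and it enters Theorem \ref{thm4.1} as the geometric decay rate of the excess-decay iteration used in its proof. The iteration yields, for the relevant excess $E(r)$ of $\nabla u$ on $B_r(x_0)$, an estimate of the form
\begin{equation*}
E(\tau r)\leq c\,\tau^{\alpha}E(r)+c\,r^{\beta}
\end{equation*}
for all $\tau\in(0,1/2]$ and small $r$, with $c$ independent of $r$ and $\tau$; here the second term aggregates the contributions of the $x$-oscillation of $A$ and the Riesz potential of $\mu$. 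Choosing $\tau$ small so that $c\tau^{\alpha}\leq \tau^{\beta}/2$---this is possible precisely when $\beta<\alpha$---and summing the resulting geometric series gives $E(r)\leq Cr^{\beta}$ at every scale, which is the Campanato condition for $C^{1,\beta}$.

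The step I expect to require the most care is the absorption of the Riesz contribution into the $Cr^{\beta}$ term in the subquadratic range. Since $1/(p-1)>1$, the natural estimate from the hypothesis is $[\mathbf{I}_1^{r}(|\mu|)(x_0)]^{1/(p-1)}\leq Cr^{\beta/(p-1)}$ with the possibly worse exponent $\beta/(p-1)$; however $r^{\beta/(p-1)}\leq r^{\beta}$ for $r\leq 1$, so the term is still dominated by $Cr^{\beta}$ at small scales and the iteration closes. Beyond this, one needs the constants in the application of Theorem \ref{thm4.1} to be uniform in $x_0\in\Omega'$, which follows from the pointwise nature of \eqref{ineq:growth}--\eqref{dini} and the locally uniform character of the power-law assumptions on $\omega$ and $\mathbf{I}_1^{\rho}(|\mu|)$.
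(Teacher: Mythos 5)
Your proof is correct and takes essentially the same route as the paper: apply the modulus-of-continuity estimate of Theorem \ref{thm4.1} with the power-law hypotheses on $\omega$ and $\mathbf{I}_1^{\rho}(|\mu|)$; your tuning of the scale $\tau$ so that $c\tau^{\alpha}\leq\tau^{\beta}/2$ plays the same role as the paper's choice of $\alpha_1\in(\beta,\alpha)$ in making the series defining $\Tilde{\omega}$, $\Tilde{\mathbf{I}}_1^{\rho}$, $\Tilde{\mathbf{W}}_{1/p,p}^{\rho}$ decay at rate $\rho^{\beta}$. You also correctly identify and resolve the only subtlety (the exponent $1/(p-1)>1$ on the Riesz/Wolff term is harmless since $r^{\beta/(p-1)}\leq r^{\beta}$ for $r\leq 1$).
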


\begin{remark}
We should stress that the constant $\alpha$ in Corollary \ref{thm1.8} is the natural H\"older exponent of the gradients of solutions to corresponding homogeneous equations with $x$-independent nonlinearities (cf. Lemma \ref{lem2.2}). Therefore, our result in Corollary \ref{thm1.8} provides the best possible H\"older exponent for the gradient of the solution. The previous Corollary is an improvement of the gradient H\"older regularity result by Liebermann \cite[Theorem 5.3]{MR1233190}, who proved $u\in C^{1,\beta_1}_{\text{loc}}$ for some $\beta_1=\beta_1(n,p,\lambda,\beta)\in(0,1)$ assuming that $\omega(r)\leq C r^{\beta}$ and $|\mu|(B_{\rho}(x))\leq C \rho^{n-1+\beta}$ for some $\beta\in (0,1)$. It is easily seen that the last condition implies $\mathbf{I}_1^{\rho}(|\mu|)(x)\leq C \rho^{\beta}$ whenever $B_{\rho}(x)\subset\subset\Omega$.
\end{remark}

We also obtain up-to-boundary gradient estimates for the $p$-Laplace equations with measure data in domains with $C^{1,\text{Dini}}$ boundaries.

\begin{definition}\label{def:bdry}
Let $\Omega$ be a domain in $\R^n$. We say that $\Omega$ has $C^{1,\text{Dini}}$ boundary if there exists a constant $R_0\in(0,1]$ and a non-decreasing function $\omega_0:[0,1]\rightarrow[0,1]$  satisfying the Dini condition
$$
\int_0^1\omega_0(r)\,\frac{dr}{r}<+\infty,
$$
such that the following holds: for any $x_0=(x_{01},x^{\prime}_0)\in \partial \Omega$, there exists a $C^{1,\text{Dini}}$ function (i.e., $C^1$ function whose first derivatives are uniformly Dini continuous) $\chi:\mathbb{R}^{n-1}\rightarrow\mathbb{R}$ and a coordinate system depending on $x_0$ such that
\begin{equation*}
\sup_{|x'_1-x'_2|\leq r} |\nabla_{x'}\chi(x'_1)-\nabla_{x'}\chi(x'_2)|\leq \omega_0(r),\quad \forall r\in(0,R_0),
\end{equation*}
and that in the new coordinate system, we have
\begin{equation*}
|\nabla_{x'}\chi(x_0^{\prime})|=0, \quad \Omega_{R_0}(x_0)=\{x\in B_{R_0}(x_0):x_1>\chi(x')\}.
\end{equation*}
\end{definition}

Our global pointwise gradient estimate and Lipschitz estimate are stated as follows.
\begin{theorem}[Boundary pointwise gradient estimate]\label{thm:bdry}
Let $p\in \big(\frac{3n-2}{2n-1},2\big)$ and suppose that $u\in W^{1,p}_0(\Omega)$ is a solution to \eqref{p-eq} with Dirichlet boundary data $u=0$ on $\partial \Omega$. Assuming that \eqref{dini}, \eqref{bdd}, and \eqref{dini-cts} are satisfied and $\Omega$ has a $C^{1,Dini}$ boundary characterized by $R_0$ and $\omega_0$ as in Definition \ref{def:bdry}, then there exists a constant $C=C(n, p, \lambda,\omega,R_0,\omega_0)$ such that estimate
\begin{equation}\label{ineq:bdry1}
    |\nabla u(x)|\leq C[\mathbf{I}^R_1(|\mu|)(x)]^\frac{1}{p-1}+C\left(\fint_{\Omega_R(x)}(|\nabla u(y)|+s)^{2-p} \,dy\right)^\frac{1}{2-p}
\end{equation}
holds for any Lebesgue point $x\in {\Omega}$ of the vector-valued function $\nabla u$ and $R\in (0,1]$. Moreover, if $u\in C^1(\Bar{\Omega})$, the estimate \eqref{ineq:bdry1} holds for any $x\in \Bar{\Omega}$.
\end{theorem}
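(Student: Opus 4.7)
The plan is to reduce Theorem \ref{thm:bdry} to the interior estimate from Theorem \ref{thm:int} by a standard flattening-plus-reflection argument, and to handle the two natural regimes separately.

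First I would dispose of the easy case. If $x\in\Omega$ satisfies $\operatorname{dist}(x,\partial\Omega)\geq R/8$ (with $R\in(0,1]$ fixed), then $B_{R/8}(x)\subset\Omega$ and Theorem \ref{thm:int} applied on $B_{R/8}(x)$, together with $\mathbf{I}_1^{R/8}(|\mu|)(x)\leq \mathbf{I}_1^{R}(|\mu|)(x)$ and the elementary inclusion $B_{R/8}(x)\subset \Omega_R(x)$, immediately yields \eqref{ineq:bdry1}. Hence the whole issue is the case when $\operatorname{dist}(x,\partial\Omega)<R/8$, for which I select a boundary point $x_0\in\partial\Omega$ with $|x-x_0|=\operatorname{dist}(x,\partial\Omega)$ and work in the coordinate chart provided by Definition \ref{def:bdry} at $x_0$.

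Next I would flatten the boundary by the change of variables $y=\Psi(x):=(x_1-\chi(x'),x')$. Since $\chi$ is $C^{1,\text{Dini}}$ with $\nabla_{x'}\chi(x_0')=0$, the map $\Psi$ is a $C^{1,\text{Dini}}$ diffeomorphism near $x_0$ whose Jacobian is the identity plus a correction controlled by $\omega_0$. Pulling equation \eqref{p-eq} back through $\Psi$ produces, in a half-ball $B_{r_0}^+$, an equation of the same form
\begin{equation*}
-\operatorname{div}\bigl(\widetilde A(y,\nabla v)\bigr)=\widetilde\mu,
\end{equation*}
where $v=u\circ\Psi^{-1}$, $\widetilde A$ satisfies \eqref{ineq:growth}--\eqref{ineq:osi} with a new modulus $\widetilde\omega$ comparable to $\omega+\omega_0$ (hence still Dini), and $|\widetilde\mu|$ is comparable to $|\mu|\circ\Psi^{-1}$. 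Because $u$ vanishes on $\partial\Omega$, $v$ vanishes on $\{y_1=0\}$, so I extend $v$ by odd reflection $V(y_1,y')=v(y_1,y')$ for $y_1\geq 0$ and $V(y_1,y')=-v(-y_1,y')$ for $y_1<0$, and extend $\widetilde A$ by the matching even reflection in the first variable and $\widetilde\mu$ by odd reflection. A direct computation, using that the scalar model \eqref{p-eq} only involves the isotropic factor $(|\nabla v|^2+s^2)^{(p-2)/2}$ together with the scalar coefficient $a$, shows that $V$ solves an equation of the same structure in the whole ball $B_{r_0}$ and that the reflected coefficient retains the Dini modulus of continuity.

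Finally, with $V$ satisfying a full-ball equation of the type covered by Theorem \ref{thm:int}, I apply the interior pointwise estimate at the point $y:=\Psi(x)$ on a ball $B_{cR}(y)\subset B_{r_0}$ with $c=c(R_0,\omega_0)$. Undoing the diffeomorphism, $|\nabla v(y)|\simeq|\nabla u(x)|$, $\mathbf{I}_1^{cR}(|\widetilde\mu|)(y)\leq C\mathbf{I}_1^R(|\mu|)(x)$ (since $\Psi$ is bi-Lipschitz and measures transform with bounded Jacobian), and the Lebesgue average of $|\nabla V|+s$ over $B_{cR}(y)$ is controlled by the average of $|\nabla u|+s$ over $\Omega_R(x)$ because the odd reflection doubles the mass on $\{y_1<0\}$ but $\Psi^{-1}(B_{cR}^+(y))\subset\Omega_R(x)$ for small $c$. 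Combining this with Case 1 gives \eqref{ineq:bdry1} for every Lebesgue point $x\in\Omega$; the statement for all $x\in\bar\Omega$ when $u\in C^1(\bar\Omega)$ then follows from continuity of both sides in $x$. The main obstacle I anticipate is verifying that the odd reflection genuinely produces a coefficient and a vector field satisfying \eqref{ineq:growth}--\eqref{ineq:osi} on both sides of $\{y_1=0\}$, including the across-the-interface Dini continuity; this forces using the scalar structure of \eqref{p-eq} (rather than the general $A(x,\xi)$), which is precisely why the boundary theorem is stated for \eqref{p-eq}.
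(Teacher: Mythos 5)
Your reduction strategy has a genuine gap in the reflection step, and it is precisely the issue you flag at the end as ``the main obstacle I anticipate'' --- but that obstacle is fatal to this approach, not merely a technicality.

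The problem is that after flattening, the equation \emph{is no longer of the scalar form} \eqref{p-eq}. Pulling \eqref{p-eq} back through $\Psi(x)=(x_1-\chi(x'),x')$ produces (as in \eqref{eq:u1}) the equation
\begin{equation*}
-\div_y\bigl(a_1(y)(|M\nabla_y v|^2+s^2)^{(p-2)/2}M^TM\,\nabla_y v\bigr)=\mu_1,\qquad M=D\Lambda,
\end{equation*}
where $M^TM=I+\text{(terms in }\nabla_{x'}\chi)$ is not the identity away from the origin; its first row and column contain the entries $-\partial_j\chi$, $j\ge 2$. For the odd reflection $V(y_1,y')=-v(-y_1,y')$ to solve a full-ball equation, the reflected vector field must satisfy the identity $P^T\widetilde A(y^*,P\xi)=\widetilde A(y,\xi)$, where $P=\mathrm{diag}(1,-1,\dots,-1)$ and $y^*=(-y_1,y')$. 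For the scalar $p$-Laplacian with diagonal coefficient this holds because $|P\xi|=|\xi|$ and $P^TP=I$; but for the flattened field one computes
\begin{equation*}
|MP\xi|^2-|M\xi|^2 = 4\xi_1\sum_{j\ge 2}\partial_j\chi\,\xi_j,\qquad
PM^TMP-M^TM\ne 0,
\end{equation*}
so the defect in the reflection identity is of size $\sim|\nabla_{x'}\chi(y')|\,|\xi|^2\sim\omega_0(|y'|)|\xi|^2$. This defect cannot be absorbed into the Dini modulus $\omega(|y-y_0|)$ for $y,y_0$ on opposite sides of $\{y_1=0\}$: the modulus must vanish as $|y-y_0|\to 0$, but the defect persists at the fixed scale $\omega_0(|y'|)$, which is not small unless $|y'|$ is small. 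Hence the even extension of $\widetilde A$ (or the natural $P$-conjugated extension) fails condition \eqref{ineq:osi} across the interface, and Theorem~\ref{thm:int} cannot be applied to $V$. This is exactly why the paper states, in the introduction, that the odd-extension argument ``only works for equations in diagonalized form.''

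The paper circumvents this by using odd reflection only for the frozen, $x$-independent homogeneous equation $-\div(A_1(0,\nabla v))=0$ (Lemma~\ref{lem5.3}), where $\nabla_{x'}\chi(0')=0$ makes $M=I$ and the equation is genuinely of the scalar $p$-Laplace form. The boundary oscillation decay is then obtained by a comparison argument: a half-ball reverse H\"older inequality (Lemma~\ref{lem5.1}), a half-ball comparison estimate between $u_1$ and the homogeneous solution $w$ (Lemma~\ref{lem:u-w2}), and a comparison between $w$ and the frozen-coefficient solution $v$, followed by the decay estimate for $v$ from Lemma~\ref{lem5.3}. The resulting boundary decay \eqref{ineq:cor} is then spliced with the interior decay \eqref{ineq:prop} through the case analysis based on $\mathrm{dist}(x,\partial\Omega)$ relative to the scale, yielding \eqref{sum3}, and the final pointwise bound follows by the same Case~1/2/3 iteration as in the proof of Theorem~\ref{thm:int}. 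Your ``easy case'' for interior points and the bi-Lipschitz bookkeeping for the Riesz potential under the change of variables are fine; the missing ingredient is the chain of half-ball comparison lemmas replacing the illegal global reflection.
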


\begin{theorem}[Boundary Lipschitz estimate]\label{thm:bdry:lip}
Let $p\in (1,2)$ and suppose that $u\in W^{1,p}_0(\Omega)$ is a solution to \eqref{p-eq} with Dirichlet boundary data $u=0$ on $\partial \Omega$. Assuming that \eqref{dini}, \eqref{bdd}, and \eqref{dini-cts} are satisfied and $\Omega$ has a $C^{1,Dini}$ boundary characterized by $R_0$ and $\omega_0$ as in Definition \ref{def:bdry}, then there exists a constant $C=C(n, p, \lambda,\omega,R_0,\omega_0)$ such that the estimate
\begin{equation}\label{ineq:bdry:lip}
\|\nabla u\|_{L^\infty(\Omega_{R/2}(x))}
         \leq C\big\|\mathbf{I}_1^R(|\mu|)\big\|^\frac{1}{p-1}_{L^\infty(\Omega_{R}(x))}+ CR^{-\frac{n}{2-p}} \||\nabla u|+s\|_{L^{2-p}(\Omega_{R}(x))}
  \end{equation}
holds for any $x\in \Bar{\Omega}$ and $R\in (0,1]$.
\end{theorem}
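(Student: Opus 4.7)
The plan is to combine Theorem~\ref{thm:int:lip} with its boundary counterpart, which I would obtain by flattening the $C^{1,\text{Dini}}$ boundary and rerunning the interior comparison/iteration scheme in a half-ball. Fix $x\in\bar\Omega$ and $R\in(0,1]$ (we may assume $R$ is comparable to $R_0$ via a covering). For $y\in\Omega_{R/2}(x)$, let $d=\text{dist}(y,\partial\Omega)$. If $d\geq cR$ for a small absolute constant $c$, then $B_{cR/2}(y)\subset\Omega\cap B_R(x)$, and Theorem~\ref{thm:int:lip} on $B_{cR/2}(y)$ already yields the pointwise bound $|\nabla u(y)|\leq$ RHS of \eqref{ineq:bdry:lip}. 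The substantive case is $d<cR$: choose $\bar y\in\partial\Omega$ with $|y-\bar y|=d$; Definition~\ref{def:bdry} supplies a $C^{1,\text{Dini}}$ diffeomorphism $\Phi$ (of the form $(x_1,x')\mapsto(x_1-\chi(x'),x')$) flattening $\partial\Omega$ near $\bar y$ onto $\{z_1=0\}$, with $D\Phi(\bar y)=I$ since $|\nabla_{x'}\chi(\bar y')|=0$. The push-forward $\tilde u:=u\circ\Phi^{-1}$ then solves an equation of the form \eqref{eq:u} in a half-ball $B_\rho^+$ (with $\rho$ comparable to $R$) with zero Dirichlet trace on $B_\rho\cap\{z_1=0\}$, for a transformed nonlinearity $\tilde A$ still satisfying \eqref{ineq:growth}--\eqref{ineq:osi} with Dini modulus $\tilde\omega\lesssim\omega+\omega_0$.

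The core step I would establish is the half-ball analog of \eqref{ineq:int:lip}: for every Lebesgue point $z\in B_{\rho/2}^+$,
\begin{equation*}
|\nabla\tilde u(z)|\leq C\big\|\mathbf{I}_1^\rho(|\tilde\mu|)\big\|_{L^\infty(B_\rho^+)}^{1/(p-1)}+C\rho^{-n/(2-p)}\big\||\nabla\tilde u|+s\big\|_{L^{2-p}(B_\rho^+)}.
\end{equation*}
I would prove this by rerunning the dyadic iteration behind Theorem~\ref{thm:int:lip} at $z$ across scales $r\leq\rho/2$. At each scale one performs the two-step comparison on $B_r(z)\cap B_\rho^+$: first with the $\tilde A$-harmonic function $v$ sharing boundary trace with $\tilde u$, then with the solution $w$ of the coefficient-frozen homogeneous problem with the same trace. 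Because $\tilde u=v=w=0$ on $\{z_1=0\}\cap B_\rho$, their differences are admissible test functions up to the flat boundary, so the pointwise monotonicity inequalities that underpin the singular-$p$ $L^{2-p}$ comparison estimates transfer verbatim from the interior. The excess decay for $\nabla w$ up to $\{z_1=0\}$ follows from Lieberman's~\cite{MR1233190} up-to-boundary $C^{1,\alpha}$ estimate for the homogeneous singular $p$-Laplacian with zero Dirichlet data on a flat portion of the boundary, which furnishes the boundary counterpart of Lemma~\ref{lem2.2} needed to close the iteration.

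Undoing $\Phi$ transfers the pointwise bound back to $y$; since $\Phi$ is bi-Lipschitz with constants depending only on $R_0$ and $\omega_0$, both $\mathbf{I}_1^\rho(|\tilde\mu|)$ and $\||\nabla\tilde u|+s\|_{L^{2-p}}$ are controlled by their counterparts on $\Omega_R(x)$. Taking the supremum over $y\in\Omega_{R/2}(x)$ and combining both cases yields \eqref{ineq:bdry:lip}. I expect the main obstacle to be the half-ball excess-decay step in the full singular range $p\in(1,2)$: for $p\leq\frac{3n-2}{2n-1}$ the $L^1$-comparison of \cite{duzaar2010gradient} is unavailable, so the argument must rely on the $L^{2-p}$-comparison inherited from the proof of Theorem~\ref{thm:int:lip}, and this comparison must be established uniformly up to the flat boundary. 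The boundary modulus $\omega_0$ enters additively into the transformed coefficient modulus, which preserves the Dini condition, but its appearance requires careful tracking of all constants through the iteration, in particular to produce a constant depending only on $n, p, \lambda, \omega, R_0, \omega_0$.
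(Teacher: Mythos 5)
Your overall strategy (flatten the boundary, run a two-step comparison, iterate) is the same as the paper's, but there are two places where your plan diverges or falls short.

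First, for the key boundary excess-decay lemma you propose to invoke Lieberman's up-to-boundary $C^{1,\alpha}$ estimate for the coefficient-frozen problem (though the relevant reference is \cite{MR969499}, not \cite{MR1233190}, which concerns measure-data estimates). The paper instead proves the half-ball oscillation estimate (Lemma \ref{lem5.3}) by \emph{odd reflection} across $\{y_1=0\}$, reducing it to the already-established interior result Theorem \ref{thm:osc}. This is possible precisely because, after freezing coefficients, the operator has the diagonal form $a_1(0)(|\xi|^2+s^2)^{(p-2)/2}\xi$, which is preserved under odd reflection; the paper explicitly notes this is why the global results are stated only for the $p$-Laplace-type equation \eqref{p-eq}. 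Your route via Lieberman would avoid the diagonal-form restriction, but yields a possibly suboptimal H\"older exponent; for the Lipschitz estimate that is harmless, though it matters for Theorem \ref{thm:bdry2} and Corollary \ref{thm1.8}. The reflection also directly produces the correct boundary oscillation quantity $\inf_\theta(\fint_{B^+_\rho}|D_{y_1}v-\theta|^{\gamma_0}+|D_{y'}v|^{\gamma_0})^{1/\gamma_0}$, with the tangential gradient left unsubtracted because $v$ vanishes on the flat boundary. In your plan you do not explain how the interior oscillation quantity $\phi$ and the boundary quantity $\psi$ are to be reconciled as the iterated scale $r$ crosses the threshold $r\sim\mathrm{dist}(z,\{z_1=0\})$; the paper handles this by the three-case split in Lemma \ref{lem:iter} and the auxiliary Lemma \ref{tech2}, and a matching mechanism is needed.

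Second, and more substantively, your proposal omits the approximation step. The dyadic iteration that absorbs $\|\nabla u\|_{L^\infty}$ is only an \emph{a priori} estimate: one must already know $\nabla u$ is bounded up to the boundary to perform the absorption and the subtraction in the geometric-series argument. The paper's Step 2 handles this by approximating $\Omega$ from inside with smooth domains $\Omega^k$ via Lieberman's regularized distance (as modified in \cite{dong2020on}), mollifying $a$ and $\mu$, truncating $u$ with cutoffs $\zeta_k$ controlled through Hardy's inequality so that $u\zeta_k\to u$ in $W^{1,p}$, proving $V(\nabla u_k)\to V(\nabla u)$ in $L^2$, and then passing to the limit. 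Without such an argument, \eqref{ineq:bdry:lip} is established only for solutions already known to be $C^1$ up to the boundary, which is strictly weaker than the stated theorem for $u\in W^{1,p}_0(\Omega)$. This is a genuine gap in your proposal.
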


As a corollary, we also obtain the global Lipschitz estimate when $\Omega$ is bounded.
\begin{corollary}\label{thm:glo:lip}
Let $\Omega\subset \R^n$ be a bounded domain. Under the assumptions of Theorem \ref{thm:bdry:lip}, there exists a constant $C=C(n, p, \lambda,\omega,R_0,\omega_0,\text{diam}(\Omega))$ such that
$$ \|\nabla u\|_{L^\infty(\Omega)}
         \leq C\big\|\mathbf{I}^{1}_1(|\mu|)\big\|^\frac{1}{p-1}_{L^\infty(\Omega)}+Cs.
         $$
\end{corollary}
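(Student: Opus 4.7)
The plan is to apply Theorem \ref{thm:bdry:lip} at a fixed scale to cover $\bar\Omega$, and then control the resulting $L^{2-p}$ term via classical integrability estimates for measure data equations combined with an interpolation and absorption argument.

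First, I would cover $\bar\Omega$ by finitely many balls $B_{1/2}(x_i)$, $i=1,\ldots,N$, with $x_i\in\bar\Omega$ and $N$ depending only on $\text{diam}(\Omega)$. Applying Theorem \ref{thm:bdry:lip} with $R=1$ at each $x_i$ and taking the maximum yields
\begin{equation*}
\|\nabla u\|_{L^\infty(\Omega)}\leq C\|\mathbf{I}_1^1(|\mu|)\|_{L^\infty(\Omega)}^{1/(p-1)}+C\||\nabla u|+s\|_{L^{2-p}(\Omega)},
\end{equation*}
where $C$ now depends additionally on $\text{diam}(\Omega)$. Setting $M_0:=\|\mathbf{I}_1^1(|\mu|)\|_{L^\infty(\Omega)}^{1/(p-1)}+s$, it remains to show $\||\nabla u|\|_{L^{2-p}(\Omega)}\leq CM_0$.

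By the classical Boccardo--Gallouet type estimate applied to \eqref{p-eq}, $\|\nabla u\|_{L^q(\Omega)}\leq C(|\mu|(\Omega))^{1/(p-1)}+Cs$ for every $q<n(p-1)/(n-1)$. A direct integration of $t\mapsto|\mu|(B_t(\cdot))/t^n$, exploiting monotonicity in $t$, gives $|\mu|(\Omega)\leq C\|\mathbf{I}_1^1(|\mu|)\|_{L^\infty(\Omega)}$, with $C$ depending on $\text{diam}(\Omega)$, so $\|\nabla u\|_{L^q(\Omega)}\leq CM_0$. For $p>(3n-2)/(2n-1)$ the exponent $2-p$ itself lies in the admissible range, and one may take $q=2-p$ directly. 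For the very singular range $p\in(1,(3n-2)/(2n-1)]$, we have $2-p\geq n(p-1)/(n-1)$, and I would instead interpolate: for any $q<n(p-1)/(n-1)$ and $\theta:=q/(2-p)\in(0,1)$,
\begin{equation*}
\|\nabla u\|_{L^{2-p}(\Omega)}\leq\|\nabla u\|_{L^q(\Omega)}^{\theta}\|\nabla u\|_{L^\infty(\Omega)}^{1-\theta}.
\end{equation*}
Young's inequality with a small parameter $\varepsilon>0$ then gives $\|\nabla u\|_{L^{2-p}(\Omega)}\leq\varepsilon\|\nabla u\|_{L^\infty(\Omega)}+C_\varepsilon\|\nabla u\|_{L^q(\Omega)}\leq\varepsilon\|\nabla u\|_{L^\infty(\Omega)}+C_\varepsilon M_0$. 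Choosing $\varepsilon$ sufficiently small absorbs the $L^\infty$ term on the right into the left-hand side of the first displayed inequality, yielding the desired bound in all cases.

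The main subtlety is the absorption step, which presupposes $\|\nabla u\|_{L^\infty(\Omega)}$ to be finite a priori. I would handle this by the standard device of approximating $\mu$ by a sequence of smooth bounded data $\mu_k$, for which the associated solutions $u_k$ have bounded gradients by classical regularity; the estimate is established uniformly in $k$ and one passes to the limit. The final constant depends on $\text{diam}(\Omega)$ through both the covering cardinality $N$ and the density-type bound for $|\mu|(\Omega)$.
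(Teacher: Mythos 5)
Your proposal is correct in outline, but it takes a more roundabout route than the paper and leans on a result (a quantitative Boccardo--Gallou\"et type estimate) that is not established here.

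The paper's proof is a two-line argument: testing \eqref{eq:p} with $u$ itself yields the energy estimate
$\|\nabla u\|_{L^p(\Omega)}\leq C\|\mu\|_{W^{-1,p'}(\R^n)}^{1/(p-1)}+Cs$, and the Hedberg--Wolff duality theorem gives $\|\mu\|_{W^{-1,p'}(\R^n)}\leq C\|\mathbf{I}_1^1(|\mu|)\|_{L^\infty(\Omega)}$. Since $2-p<p$ and $\Omega$ is bounded, H\"older immediately bounds the $L^{2-p}$ term in \eqref{ineq:bdry:lip} by the $L^p$ energy norm, and the estimate follows with no interpolation or absorption. This gets more than you ask for: the energy estimate controls $\nabla u$ in $L^p$, which always dominates $L^{2-p}$ on a bounded domain when $p>1$, whereas the Marcinkiewicz exponent $n(p-1)/(n-1)$ you invoke is strictly smaller than $p$ in the whole singular range, so you are forced into the interpolation-with-$L^\infty$ plus absorption step precisely because you chose a weaker intermediate estimate. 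Your approach does work: the covering reduction to a fixed scale $R=1$ is sound; the bound $|\mu|(\Omega)\leq C(\operatorname{diam}\Omega)\|\mathbf{I}_1^1(|\mu|)\|_{L^\infty(\Omega)}$ follows from the observation that $\mathbf{I}_1^1(|\mu|)(x)\geq c_n|\mu|(B_{1/2}(x))$ together with a finite cover; the range split at $p=(3n-2)/(2n-1)$ is exactly where $2-p$ crosses $n(p-1)/(n-1)$; and the a priori finiteness of $\|\nabla u\|_{L^\infty}$ needed for absorption is in fact already guaranteed by Theorem \ref{thm:bdry:lip} itself (its right-hand side is finite because $u\in W^{1,p}_0$ and $2-p<p$), so you do not even strictly need a separate approximation pass. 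That said, you should either prove the quantitative Boccardo--Gallou\"et bound you use or cite it; the paper avoids this dependency entirely. If you revise, try the energy-estimate route: it shortens the argument, removes the absorption step, and keeps the proof self-contained within the tools already used in Section 4 and the Hedberg--Wolff reference.
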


A global modulus of continuity estimate is also established in Theorem \ref{thm:bdry2} under the same conditions, which implies that corresponding up-to-boundary gradient continuity results as in Theorem \ref{thm1.2} and Corollaries \ref{thm1.3}-\ref{thm1.8} also hold for the $p$-Laplace equations with measure data.

Let us give a brief description of the proofs. We first apply an iteration argument to get an $L^{\gamma_0}$-mean oscillation estimate of the gradients of solutions to the homogeneous equation with
$x$-independent nonlinearities
$$
-\div (A_0(\nabla v))=0
$$
in Section \ref{sec2}, where $\gamma_0\in (0,1)$.
Our proofs of the interior gradient estimates are then based on a comparison estimate between the original solution $u$ of \eqref{eq:u} and the solution to the homogeneous equation $ -\div(A(x,\nabla w))=0$ in a ball $B_R$ with the boundary condition $u=w$ on $\partial B_R$. The outcome is the inequality
\begin{equation}
            \label{eq12.06}
\begin{aligned}
         &\left(\fint_{B_{R}}|\nabla u-\nabla w|^{\gamma_0}\,dx \right)^{1/\gamma_0}\\
         &\leq C\left[\frac{|\mu|(B_{R})}{R^{n-1}} \right]^\frac{1}{p-1}+C\frac{|\mu|(B_{R})}{R^{n-1}}\fint_{B_{R}}(|\nabla u|+s)^{2-p}\,dx,
\end{aligned}
\end{equation}
which holds for some constant $\gamma_0\in(0,1)$. The details can be found in Lemma \ref{lem:u-w}. For the case when $p\in \big(\frac{3n-2}{2n-1},2\big)$, we can choose $\gamma_0=2-p$, which is the same integral exponent on the right hand side. We then borrow an idea in \cite{dong2017c1} by estimating the $L^{\gamma_0}$-mean oscillation to adapt the iteration scheme used, for instance, in \cite{duzaar2010gradient}. However, for the case when $p\in \big(1,\frac{3n-2}{2n-1}\big]$, we are only able to prove the comparison estimate \eqref{eq12.06} for some $\gamma_0<2-p$ and that is the reason why we only obtain Lipschitz estimates instead of pointwise gradient estimates in this case.

For the gradient estimates up to the boundary, we use the technique of flattening the boundary and generalize the interior oscillation estimates to half balls. We adapt an idea in \cite{choi2019gradient} to establish the global $L^{\gamma_0}$-mean oscillation estimates by a delicate combination of the interior estimates and the estimates near a flat boundary. To this end, we also apply an odd extension argument to derive an  $L^{\gamma_0}$-mean oscillation estimate on half balls for homogeneous equations with  $x$-independent nonlinearities. This argument only works for equations in diagonalized form, such as the $p$-Laplace equation, so the global estimates for general equations remain open. As a partial result in this direction, we refer the reader to \cite{nguyen2020pointwise} for a weighted pointwise boundary estimate under the condition that $\partial \Omega$ is sufficiently flat in the sense of Reifenberg. We also refer the reader to \cite{MR969499,breit2019global} for boundary regularity results for quasi-linear equations with sufficiently regular right-hand side.

The rest of the paper is organized as follows. In the next section, we derive an $L^{\gamma_0}$-mean oscillation estimate of solutions to the homogeneous equation with $x$-independent nonlinearities. In Section \ref{sec3}, we give the proof of Theorem \ref{thm:int}. Section \ref{sec4} is devoted to the Lipschitz estimate and the interior modulus of continuity estimate of the gradient of solutions as well as its corollaries. Finally, in Section \ref{sec5} we consider the corresponding boundary estimates.

\section{An oscillation estimate}\label{sec2}

This section is devoted to the proof of the following interior oscillation estimate for solutions to the homogeneous equation
\begin{equation}
                        \label{eq1.01}
-\div (A_0(\nabla v))=0\quad \text{in}\,\, \Omega,
\end{equation}
where $A_0=A_0(\xi)$ is a vector field independent of $x$ satisfying conditions (\ref{ineq:growth}) and (\ref{ineq:elliptic}) for some $s\geq0$, $\lambda\geq1$, and $p>1$. In this section, we denote the integral average over $B_R(x)$ by $(\cdot)_{B_R(x)}$.

\begin{theorem}\label{thm:osc}
Let $v\in W_{\text{loc}}^{1,p}(\Omega)$ be a solution to \eqref{eq1.01}
and $\gamma_0\in (0,1)$. Then there exist constants $\alpha\in(0,1)$ depending on $n$, $p$, and $\lambda$, and $C>1$ depending on $n$, $p$, $\lambda$, and $\gamma_0$, such that
for every $B_R(x_0)\subset\Omega$ and $\rho\in (0,R)$, we have
\begin{align}\label{ineq:osc:c}
     \inf_{\mathbf{q}\in \mathbb{R}^n}\left(\fint_{B_{\rho}(x_0)}|\nabla v-\mathbf{q}|^{\gamma_0}\right)^{1/\gamma_0}\leq& C\left(\frac{\rho}{R}\right)^{\alpha}\inf_{\mathbf{q}\in \mathbb{R}^n}\left(\fint_{B_{R}(x_0)}|\nabla v-\mathbf{q}|^{\gamma_0}\right)^{1/\gamma_0}.
\end{align}
\end{theorem}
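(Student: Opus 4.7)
My plan is to reduce the statement to the classical interior $C^{1,\alpha}$ regularity for solutions of the $x$-independent homogeneous equation \eqref{eq1.01} (due to Uhlenbeck, Tolksdorf, DiBenedetto, and Lieberman), and then to introduce the minimizing constant vector on the right-hand side by the translation trick $v\mapsto v-\mathbf{q}^*\cdot x$. The starting ingredient, to be established separately as Lemma~\ref{lem2.2}, is an $L^{\gamma_0}$-flavored H\"older decay
\[
\mathop{\mathrm{osc}}_{B_\rho(y_0)}\nabla v \le C\Bigl(\frac{\rho}{r}\Bigr)^\alpha\Bigl(\fint_{B_r(y_0)}(|\nabla v|+s)^{\gamma_0}\,dx\Bigr)^{1/\gamma_0},\quad \rho\le r/2,
\]
valid for every $B_r(y_0)\subset\Omega$, with $\alpha=\alpha(n,p,\lambda)\in(0,1)$ and $C=C(n,p,\lambda,\gamma_0)$. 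This follows by combining the classical H\"older continuity of $\nabla v$ with a Moser-type $L^\infty$ bound controlling $\|\nabla v\|_{L^\infty(B_{r/2})}+s$ by its $L^{\gamma_0}$-average on $B_r$.

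To obtain the infimum form in \eqref{ineq:osc:c}, I pick a near-minimizer $\mathbf{q}^*\in\mathbb{R}^n$ of $\mathbf{q}\mapsto\fint_{B_R(x_0)}|\nabla v-\mathbf{q}|^{\gamma_0}\,dx$ and set $w(x):=v(x)-\mathbf{q}^*\cdot x$, so that $\nabla w=\nabla v-\mathbf{q}^*$ and $w$ solves $-\div(\tilde A_0(\nabla w))=0$ with $\tilde A_0(\xi):=A_0(\xi+\mathbf{q}^*)$. A brief check shows that $\tilde A_0$ satisfies \eqref{ineq:growth}--\eqref{ineq:elliptic} with the same exponent $p$, an adjusted constant $\tilde\lambda=C(p)\lambda$, and a shifted degeneracy parameter $\tilde s^2:=s^2+|\mathbf{q}^*|^2$. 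Applying the H\"older estimate above to $w$ on $B_R(x_0)$, and using the translation invariance of the oscillation, yields
\[
\mathop{\mathrm{osc}}_{B_\rho(x_0)}\nabla v \le C\Bigl(\frac{\rho}{R}\Bigr)^\alpha\Bigl(\fint_{B_R(x_0)}(|\nabla v-\mathbf{q}^*|+\tilde s)^{\gamma_0}\,dx\Bigr)^{1/\gamma_0}
\]
for $\rho\le R/2$. The left-hand side of \eqref{ineq:osc:c} is dominated by this oscillation via $\inf_{\mathbf{q}'}(\fint_{B_\rho(x_0)}|\nabla v-\mathbf{q}'|^{\gamma_0})^{1/\gamma_0}\le\mathop{\mathrm{osc}}_{B_\rho(x_0)}\nabla v$ (take $\mathbf{q}'=(\nabla v)_{B_\rho(x_0)}$), and the $\tilde s$-term on the right is absorbed into the infimum using that $|\mathbf{q}^*|$ itself is controlled by averages of $\nabla v$ on $B_R(x_0)$ plus the minimizing infimum. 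The range $\rho>R/2$ is trivial from $C\ge 1$.

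The principal obstacle is the translation step: verifying that $\tilde A_0$ satisfies the structural conditions with constants depending only on $n,p,\lambda$ is delicate in the singular range $p\in(1,2)$, where $(p-2)/2<0$ and the negative powers $(s^2+|\xi|^2)^{(p-2)/2}$ do not transform cleanly under $\xi\mapsto\xi+\mathbf{q}^*$; the same difficulty reappears when absorbing $\tilde s$ uniformly in $|\mathbf{q}^*|$. A cleaner alternative that bypasses the explicit translation is to prove Lemma~\ref{lem2.2} directly in the infimum form, either by a compactness/blow-up argument applied to appropriate scalings of $v$, or by iterating an oscillation decay on dyadic scales $\rho=2^{-k}R$ combined with Campanato's characterization of H\"older spaces.
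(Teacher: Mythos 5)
Your proposal does not go through as written. The principal issue, which you partly recognize but do not resolve, is that the translation trick $v\mapsto w=v-\mathbf{q}^*\cdot x$ fails in the singular range $p\in(1,2)$. You claim that $\tilde A_0(\xi)=A_0(\xi+\mathbf{q}^*)$ satisfies \eqref{ineq:growth}--\eqref{ineq:elliptic} with $\tilde s^2=s^2+|\mathbf{q}^*|^2$ and $\tilde\lambda=C(p)\lambda$. This is false when $p<2$: to verify the upper bound $|D_\xi\tilde A_0(\xi)|\le\tilde\lambda(\tilde s^2+|\xi|^2)^{(p-2)/2}$, one needs $s^2+|\xi+\mathbf{q}^*|^2\ge c\,(\tilde s^2+|\xi|^2)$ because $(p-2)/2<0$; at $\xi=-\mathbf{q}^*$ with $s=0$ the left side vanishes while the right side is $c\cdot 2|\mathbf{q}^*|^2>0$. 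So the shifted vector field does not satisfy the hypotheses uniformly in $\mathbf{q}^*$. Moreover, even granting a workable shifted structure, your absorption step is circular: after translation your starting estimate has $\tilde s\ge|\mathbf{q}^*|$ on the right, and $|\mathbf{q}^*|$ is controlled by the $L^{\gamma_0}$-\emph{average} of $\nabla v$, not by the oscillation infimum. For $v(x)=K\,x_1$ with $K$ large, the oscillation infimum on any ball is $0$ while $|\mathbf{q}^*|=K$, so $|\mathbf{q}^*|$ cannot be absorbed into the right-hand side of \eqref{ineq:osc:c}. Your proposed ``cleaner alternative'' (compactness/blow-up, or dyadic iteration plus Campanato) gestures in the right direction but is not an argument.

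The paper sidesteps both issues by starting from a genuinely different Lemma~\ref{lem2.2}: a Campanato-type decay of the \emph{$L^p$-mean oscillation} of $\nabla v$, namely $\fint_{B_r}|\nabla v-(\nabla v)_{B_r}|^p\le C(r/R)^{\alpha p}\fint_{B_R}|\nabla v-(\nabla v)_{B_R}|^p$, which already controls oscillation by oscillation with no $s$-term or zeroth-order average of $\nabla v$. This makes any translation unnecessary: subtracting a constant vector from $\nabla v$ does not change either side. From this, Corollary~\ref{cor} yields a pointwise $C^\alpha$-seminorm bound with the $L^p$-mean oscillation on the right, and then the key step \eqref{eq:norm} passes from $L^p$-mean oscillation to $L^{\gamma_0}$-mean oscillation by interpolation
$\|\nabla v-\mathbf{q}_R\|_{L^p}\le\|\nabla v-\mathbf{q}_R\|_{L^\infty}^{(p-\gamma_0)/p}\|\nabla v-\mathbf{q}_R\|_{L^{\gamma_0}}^{\gamma_0/p}$,
Young's inequality to split off an $\epsilon\|\nabla v-\mathbf{q}_R\|_{L^\infty}$ term, and an absorption iteration over $r_k=(1-2^{-k})\rho$ summed against $\epsilon^k$. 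If you want to repair your argument, replace your proposed starting estimate (which carries $(|\nabla v|+s)^{\gamma_0}$ on the right) by the paper's Lemma~\ref{lem2.2}, and drop the translation entirely; the $L^p\to L^{\gamma_0}$ interpolation-absorption iteration is the missing ingredient.
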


To prove the above theorem, we first recall a classical oscillation estimate. Estimates of this type, with different exponents involved, were developed in \cite{lieberman1991natural, dibenedetto1993higher, duzaar2010gradient}.
\begin{lemma}
                \label{lem2.2}
Let $v\in W_{\text{loc}}^{1,p}(\Omega)$ be a solution to \eqref{eq1.01}. There exist constants $C>1$ and $\alpha\in(0,1)$ depending only on $n$, $p$, and $\lambda$, such that $v\in C^{1,\alpha}_{\text{loc}}(\Omega)$ and for every $B_R(x_0)\subset\Omega$ and $r\in (0,R)$, we have
\begin{align*}
    \fint_{B_r(x_0)}|\nabla v-(\nabla v)_{B_r(x_0)}|^p \,dx\leq C\left(\frac{r}{R}\right)^{\alpha p} \fint_{B_R(x_0)}|\nabla v-(\nabla v)_{B_R(x_0)}|^p \,dx.
\end{align*}
\end{lemma}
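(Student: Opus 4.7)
The plan is to prove Lemma \ref{lem2.2} by the standard differentiate-and-linearize strategy used in the classical works \cite{lieberman1991natural, dibenedetto1993higher, duzaar2010gradient}. First, a Moser-type iteration based on the Caccioppoli inequality for $v$ and the monotonicity from \eqref{ineq:elliptic} yields the local $L^\infty$ gradient bound
$$
\|\nabla v\|_{L^\infty(B_{R/2}(x_0))} \le C\left(\fint_{B_R(x_0)}(|\nabla v|+s)^p\,dx\right)^{1/p},
$$
so that $|\nabla v|$ is pointwise controlled on a slightly smaller ball.

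Next, since $A_0$ is $x$-independent, each partial derivative $w:=\partial_k v$ satisfies, at least formally (rigorously via difference quotients in Uhlenbeck's scheme), the linear divergence-form equation
$$
\partial_i\bigl(a^{ij}(x)\,\partial_j w\bigr)=0,\qquad a^{ij}(x):=(D_\xi A_0)^{ij}(\nabla v(x)),
$$
whose coefficients are bounded, measurable, and uniformly elliptic on $B_{R/4}(x_0)$ by Step 1 together with \eqref{ineq:growth}--\eqref{ineq:elliptic}. I would apply the De Giorgi--Nash--Moser theorem to $w-(w)_{B_{R/4}(x_0)}$, still a solution of this linear equation, to obtain a H\"older exponent $\alpha=\alpha(n,p,\lambda)\in(0,1)$ and the oscillation decay
$$
\operatorname{osc}_{B_r(x_0)} w \le C\Bigl(\frac{r}{R}\Bigr)^{\alpha}\operatorname{osc}_{B_{R/4}(x_0)} w \qquad \text{for } r\le R/8,
$$
while a Moser $L^\infty$--$L^p$ estimate applied to $w-(w)_{B_{R/2}(x_0)}$ gives
$$
\operatorname{osc}_{B_{R/4}(x_0)} w \le C\left(\fint_{B_{R/2}(x_0)}\bigl|w-(w)_{B_{R/2}(x_0)}\bigr|^p\,dx\right)^{1/p}.
$$
Summing over $k=1,\dots,n$, using the elementary bound $\fint_{B_r}|\nabla v-(\nabla v)_{B_r}|^p\le C(\operatorname{osc}_{B_r}\nabla v)^p$, together with the comparison $\fint_{B_{R/2}}|\nabla v-(\nabla v)_{B_{R/2}}|^p\le 2^{n}\fint_{B_R}|\nabla v-(\nabla v)_{B_R}|^p$ and a trivial adjustment of constants to cover radii $r\in(R/8,R)$, yields the claimed estimate.

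The main obstacle is the singular regime $p\in(1,2)$, where the coefficient matrix $a^{ij}$ has ellipticity factor $(s^2+|\nabla v|^2)^{(p-2)/2}$ that can blow up from above as $|\nabla v|+s\to 0^+$, so that the linearized equation may fail to be uniformly elliptic on the set where $\nabla v$ vanishes. The standard resolution is approximation: solve the regularized equation with $s$ replaced by $s+\epsilon$ for $\epsilon>0$, carry out the argument for the approximating solutions $v^\epsilon$ with constants independent of $\epsilon$, and pass to the limit $\epsilon\to 0^+$ using uniform local $W^{1,p}$ bounds. This is the classical technical point handled in the cited references.
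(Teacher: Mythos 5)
The paper does not actually prove Lemma \ref{lem2.2}; it is recalled as a classical result with references to \cite{lieberman1991natural, dibenedetto1993higher, duzaar2010gradient}, so the relevant comparison is with the proofs in those works. Your overall architecture (local gradient bound, then regularity for $w=\partial_k v$) matches the classical strategy, but there is a genuine gap at the decisive step, and the fix you propose does not close it. After linearizing, the coefficients $a^{ij}(x)=(D_\xi A_0)^{ij}(\nabla v(x))$ satisfy $\lambda^{-1}(s^2+|\nabla v(x)|^2)^{(p-2)/2}|\eta|^2\le a^{ij}\eta_i\eta_j\le \lambda(s^2+|\nabla v(x)|^2)^{(p-2)/2}|\eta|^2$. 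De Giorgi--Nash--Moser requires the eigenvalues to lie in a fixed interval $[\Lambda^{-1},\Lambda]$ \emph{uniformly in $x$}, with $\alpha$ depending on $\Lambda$. Here the pointwise ratio of upper to lower eigenvalue is $\lambda^2$, but the common scale factor $(s^2+|\nabla v(x)|^2)^{(p-2)/2}$ varies with $x$ and, since $p<2$, blows up wherever $|\nabla v(x)|+s$ is small. Your Step 1 bound controls it only from below. Replacing $s$ by $s+\epsilon$ makes the equation uniformly elliptic for each fixed $\epsilon$, but the resulting global ellipticity ratio is of order $(\|\nabla v^\epsilon\|_{L^\infty}/\epsilon)^{2-p}$, so the De Giorgi exponent $\alpha$ and the constant $C$ degenerate as $\epsilon\to 0^+$; the regularization therefore does not produce an $\epsilon$-independent decay estimate, and the limit passage fails precisely where it matters.

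The classical proofs avoid this by never applying linear theory globally on a ball. They run a dichotomy (``degenerate versus nondegenerate alternative'') with intrinsic scaling: on each ball either the measure of the set where $|\nabla v|$ is below half its supremum is small, in which case the linearized equation is uniformly elliptic on most of the ball and a De Giorgi-type measure-theoretic argument applies, or that set is large, in which case one proves directly that $\sup|\nabla v|$ decays by a fixed factor on a smaller ball. Iterating the alternative yields the H\"older decay of $\nabla v$ with $\alpha=\alpha(n,p,\lambda)$, from which the stated $L^p$-mean oscillation estimate follows by exactly the elementary manipulations in your last paragraph. So the final reduction in your sketch is fine, but the core regularity input cannot be obtained the way you describe; you should either carry out the alternative argument or cite it, as the paper does.
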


The lemma above directly implies the following corollary.
\begin{corollary}\label{cor}
Under the conditions of Lemma \ref{lem2.2}, there exist constants $C>1$ and $\alpha\in(0,1)$ depending only on $n$, $p$, and $\lambda$, such that for every $B_R(x_0)\subset\Omega$, we have
\begin{equation}\label{eq:holder1}
    R^\alpha[\nabla v]_{C^\alpha(B_{R/2}(x_0))}\leq C\left(\fint_{B_R(x_0)}|\nabla v-(\nabla v)_{B_R(x_0)}|^p\right)^{1/p},
\end{equation}
and for any $r\in [R/2,R)$,
\begin{equation}\label{eq:holder2}
 [\nabla v]_{C^\alpha(B_{r}(x_0))}\leq C\frac{R^{n/p+1-\alpha}}{(R-r)^{n/p+1}}\left(\fint_{B_R(x_0)}|\nabla v-(\nabla v)_{B_R(x_0)}|^p\right)^{1/p}.
\end{equation}
\end{corollary}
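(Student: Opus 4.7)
The plan is to deduce both inequalities from Lemma~\ref{lem2.2} by combining it with Campanato's characterization of Hölder-continuous functions. The first inequality \eqref{eq:holder1} is essentially the special case $r=R/2$ of a sharper form of the second, so I treat them uniformly: the strategy is to promote the decay of the $L^p$ mean oscillation on a single nested pair of balls (Lemma~\ref{lem2.2}) to a Campanato-type bound valid on \emph{every} sub-ball of $B_r(x_0)$, and then to quote the Campanato embedding theorem.

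First I establish the mean-oscillation bound. Fix $r\in[R/2,R)$ and $x\in B_r(x_0)$; since $|x-x_0|<r$, one has the inclusion $B_{R-r}(x)\subset B_R(x_0)\subset\Omega$. Applying Lemma~\ref{lem2.2} on $B_{R-r}(x)$ with inner radius $\rho\in(0,R-r)$, then using the trivial inclusion $B_{R-r}(x)\subset B_R(x_0)$ together with the standard fact that the mean minimizes the $L^p$-distance up to a factor of $2^p$, gives
$$
\fint_{B_\rho(x)}|\nabla v-(\nabla v)_{B_\rho(x)}|^p\,dy
\leq C\Big(\frac{\rho}{R-r}\Big)^{\alpha p}\Big(\frac{R}{R-r}\Big)^n\fint_{B_R(x_0)}|\nabla v-(\nabla v)_{B_R(x_0)}|^p\,dy.
$$
For the remaining sub-balls $B_\rho(x)\subset B_r(x_0)$ with $\rho\geq R-r$ the same inclusion-plus-minimization argument (now \emph{without} the $(\rho/(R-r))^{\alpha p}$ factor), together with the harmless bound $1\leq(\rho/(R-r))^{\alpha p}$, produces the same right-hand side. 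Hence the displayed inequality holds uniformly for every $B_\rho(x)\subset B_r(x_0)$.

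This Campanato-type estimate combined with the classical embedding $\mathcal{L}^{p,n+\alpha p}\hookrightarrow C^{0,\alpha}$ on a ball yields
$$
[\nabla v]_{C^\alpha(B_r(x_0))}\leq C\,R^{n/p}(R-r)^{-\alpha-n/p}\left(\fint_{B_R(x_0)}|\nabla v-(\nabla v)_{B_R(x_0)}|^p\,dy\right)^{1/p}.
$$
Setting $r=R/2$ immediately produces \eqref{eq:holder1}. For general $r\in[R/2,R)$, the inequalities $R\geq R-r>0$ and $\alpha-1<0$ imply $R^{\alpha-1}\leq(R-r)^{\alpha-1}$, i.e.\ $R^{n/p}\leq R^{n/p+1-\alpha}(R-r)^{\alpha-1}$; multiplying by $(R-r)^{-\alpha-n/p}$ collapses the powers of $(R-r)$ to $-n/p-1$, which is precisely the constant in \eqref{eq:holder2}. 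The only real obstacle is bookkeeping — tracking constants through the ball-comparison step and ensuring that the Campanato bound holds at \emph{all} scales inside $B_r(x_0)$, including those above $R-r$ — and the simple ``$\rho\geq R-r$'' case handled above closes that gap.
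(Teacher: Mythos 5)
Your proof is correct, and it takes a genuinely different route from the paper's.

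The paper proves \eqref{eq:holder1} by establishing a Campanato-type bound only on the half-ball $B_{R/2}(x_0)$ (using Lemma \ref{lem2.2} on pairs $B_r(x)\subset B_{R/2}(x)\subset B_R(x_0)$) and then applying the Campanato embedding there; it then derives \eqref{eq:holder2} by a separate chaining argument, applying \eqref{eq:holder1} on the small balls $B_{(R-r)/2}(z)$ and subdividing the segment joining two points of $B_r(x_0)$ into $N\asymp |x-y|/(R-r)$ pieces, picking up the factor $N^{1-\alpha}\lesssim (R/(R-r))^{1-\alpha}$. You, by contrast, treat both inequalities uniformly: you establish the Campanato condition directly on the larger ball $B_r(x_0)$, with a bound that is uniform in $x$ and $\rho$ by splitting the small scales $\rho< R-r$ (where Lemma \ref{lem2.2} on $B_{R-r}(x)\subset B_R(x_0)$ gives decay) from the large scales (where the trivial volume comparison with $B_R(x_0)$ suffices), and then quote the Campanato embedding once. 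This buys you a somewhat cleaner argument with no chaining, and in fact your intermediate bound $[\nabla v]_{C^\alpha(B_r)}\lesssim R^{n/p}(R-r)^{-\alpha-n/p}(\cdots)^{1/p}$ is \emph{stronger} than \eqref{eq:holder2}, since $(R/(R-r))^\alpha\le (R/(R-r))$; you correctly weaken it to match the stated inequality.

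One caveat worth flagging: the standard Campanato embedding for a bounded domain $\Omega$ of type (A) is stated for the seminorm built from the truncated sets $\Omega\cap B_\rho(x)$, whereas you only verify the bound for balls $B_\rho(x)\subset B_r(x_0)$ (together with the case $x\in B_r(x_0)$, $\rho<R-r$, where the ball may protrude but stays inside $B_R(x_0)$). For a ball the two formulations are equivalent with a universal constant, but this equivalence requires an additional, if routine, chaining step near $\partial B_r(x_0)$: when both $x$ and $y$ are close to the boundary and far apart, one must drift the comparison balls toward the center while their radii grow geometrically. It would be worth one sentence acknowledging this (the paper also glosses over it when proving \eqref{eq:holder1}), but it is not a genuine gap.
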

\begin{proof}
Without loss of generality, we assume $x_0=0$. For any $x\in B_{R/2}$ and $r\leq R/2$, by Lemma \ref{lem2.2} we have
\begin{align*}
    \fint_{B_r(x)}|\nabla v-(\nabla v)_{B_r(x)}|^p&\leq C\left(\frac{r}{R}\right)^{\alpha p}\fint_{B_{R/2}(x)}|\nabla v-(\nabla v)_{B_{R/2}(x)}|^p\\
    &\leq C\left(\frac{r}{R}\right)^{\alpha p}\fint_{B_{R}}|\nabla v-(\nabla v)_{B_{R}}|^p.
\end{align*}
By Campanato's characterization of H\"older continuous functions, we obtain \eqref{eq:holder1}.

Now for any $r>R/2$ and $z\in B_r$, using (\ref{eq:holder1}) and the triangle inequality, we have
\begin{align}
                    \label{eq6.38}
    [\nabla v]_{C^\alpha(B_{(R-r)/2}(z))}&\leq C(R-r)^{-\alpha}\left(\fint_{B_{R-r}(z)}|\nabla v-(\nabla v)_{B_{R-r}(z)}|^p\right)^{1/p} \notag\\
    &\leq C\frac{R^{n/p}}{(R-r)^{n/p+\alpha}}\left(\fint_{B_R}|\nabla v-(\nabla v)_{B_R}|^p \right)^{1/p}.
\end{align}
Thus for any $x,y\in B_r$, let $N=\min\big\{m\in \mathbb{Z}:m>\frac{2|x-y|}{R-r}\big\}$. We can divide the line segment connecting $x$ and $y$ into $N$ equal segments using $x_1,...,x_{N-1}$, $x_0=x$, and  $x_N=y$, such that
$$
|x_k-x_{k+1}|=\frac{|x-y|}{N}<\frac{R-r}{2}.
$$
Then by the triangle inequality and \eqref{eq6.38}, we have
\begin{align*}
    &|\nabla v(x)-\nabla v(y)|\leq\sum_{k=0}^{N-1} |\nabla v(x_k)-\nabla v(x_{k+1})|\\
    &\leq C\sum_{k=0}^{N-1}\frac{R^{n/p}}{(R-r)^{n/p+\alpha}}\left(\int_{B_R}|\nabla v-(\nabla v)_{B_R}|^p\right)^{1/p}\left|\frac{x-y}{N}
    \right|^\alpha\\
    &\leq CN^{1-\alpha}\frac{R^{n/p}}{(R-r)^{n/p+\alpha}}\left(\int_{B_R}|\nabla v-(\nabla v)_{B_R}|^p\right)^{1/p}|x-y|^\alpha\\
    &\leq C\left(\frac{R}{R-r}\right)^{1-\alpha}\frac{R^{n/p}}{(R-r)^{n/p+\alpha}}\left(\int_{B_R}|\nabla v-(\nabla v)_{B_R}|^p\right)^{1/p}|x-y|^\alpha,
\end{align*}
which directly implies (\ref{eq:holder2}). The corollary is proved.
\end{proof}

Now we are ready to give the proof of Theorem \ref{thm:osc}.
\begin{proof}[Proof of Theorem \ref{thm:osc}]
As before, without loss of generality, we assume $x_0=0$. Clearly for any $B_{\rho}=B_{\rho}(0)\subset \Omega$, there exists $\mathbf{q}_{\rho}=(q_{\rho}^{(1)},...,q_{\rho}^{(n)})\in\mathbb{R}^n$ such that
\begin{align*}
    \left(\fint_{B_{\rho}}|\nabla v-\mathbf{q}_{\rho}|^{\gamma_0}\right)^{1/\gamma_0}
    =\inf_{\mathbf{q}\in\mathbb{R}^n}\left(\fint_{B_{\rho}}|\nabla v-\mathbf{q}|^{\gamma_0}\right)^{1/\gamma_0}.
\end{align*}
Also, it is easily  seen that
\begin{align}\label{range:q}
    q^{(i)}_{\rho}\in \Range(D_iv\mid_{B_\rho}).
\end{align}

We claim that there exists a constant $C$ depending only on $n$, $p$, $\lambda$, and $\gamma_0$, such that
\begin{equation}\label{eq:norm}
    \|\nabla v-\mathbf{q}_{\rho/2}\|_{L^\infty(B_{\rho/2})}\leq C\left(\fint_{B_\rho}|\nabla v-\mathbf{q}_\rho|^{\gamma_0}\right)^{1/\gamma_0}
\end{equation}
holds for any $B_{\rho}(x_0)\subset \Omega$.

We prove the claim by using Corollary \ref{cor} and iteration.

For any $R/2<r<R\leq\text{dist}(x_0,\partial\Omega)$, using (\ref{eq:holder2}) and the triangle inequality, we get
\begin{align*}
    &\|\nabla v-\mathbf{q}_{r}\|_{L^\infty(B_r)}\leq r^\alpha[\nabla v]_{C^\alpha(B_r)}\\
    &\leq C\frac{R^{n/p+1}}{(R-r)^{n/p+1}}\left(\fint_{B_R}|\nabla v-(\nabla v)_{B_R}|^p\right)^{1/p}\\
    &\leq C\frac{R^{n/p+1}}{(R-r)^{n/p+1}}\left(\fint_{B_R}|\nabla v-\mathbf{q}_R|^p\right)^{1/p}\\
    &\leq C\frac{R^{n/p+1}}{(R-r)^{n/p+1}}\|\nabla v-\mathbf{q}_{R}\|^\frac{p-\gamma_0}{p}_{L^\infty(B_{R})}\left(\fint_{B_R}|\nabla v-\mathbf{q}_R|^{\gamma_0}\right)^{1/p}\\
    &\leq \epsilon\|\nabla v-\mathbf{q}_R\|_{L^\infty(B_R)}+C_\epsilon\left(\frac{R^{n/p+1}}{(R-r)^{n/p+1}}\right)^\frac{p}{\gamma_0}\left(\fint_{B_R}|\nabla v-\mathbf{q}_R|^{\gamma_0}\right)^{1/\gamma_0},
\end{align*}
where we used Young's inequality with exponents $p/\gamma_0$ and $p/(p-\gamma_0)$ in the last line.

Now taking $r_k=(1-2^{-k})\rho$, $r=r_k$, and $R=r_{k+1}$, we have
\begin{align*}
    &\|\nabla v-\mathbf{q}_{r_k}\|_{L^\infty(B_{r_k})}\\
    &\leq \epsilon\|\nabla v-\mathbf{q}_{r_{k+1}}\|_{L^\infty(B_{r_{k+1}})}+C_\epsilon2^{k{\beta}}\left(\fint_{B_{r_{k+1}}}|\nabla v-\mathbf{q}_{r_{k+1}}|^{\gamma_0}\right)^{1/\gamma_0}\\
     &\leq \epsilon\|\nabla v-\mathbf{q}_{r_{k+1}}\|_{L^\infty(B_{r_{k+1}})}+C_\epsilon2^{k{\beta}+n/\gamma_0}\left(\fint_{B_{\rho}}|\nabla v-\mathbf{q}_{\rho}|^{\gamma_0}\right)^{1/\gamma_0},
\end{align*}
where ${\beta}=(n+p)/\gamma_0$.
Taking $\epsilon=3^{-{\beta}}$, multiplying both sides by $\epsilon^k$ and summing in $k$, we get
\begin{align*}
    \sum_{k=1}^{\infty}\epsilon^k\|\nabla v-\mathbf{q}_{r_k}\|_{L^\infty(B_{r_k})}\leq&\sum_{k=1}^\infty \epsilon^{k+1}\|\nabla v-\mathbf{q}_{r_{k+1}}\|_{L^\infty(B_{r_{k+1}})}\\&+C\left(\fint_{B_{\rho}}|\nabla v-\mathbf{q}_{\rho}|^{\gamma_0}\right)^{1/\gamma_0},
\end{align*}
where the summations are finite and $C=C(n,p,\lambda,\gamma_0)$.
By subtracting
$$
 \sum_{k=2}^{\infty}\epsilon^k\|\nabla v-\mathbf{q}_{r_k}\|_{L^\infty(B_{r_k})}
$$
from both sides of the above inequality, we obtain \eqref{eq:norm}. The claim is proved.

Now we are ready to prove \eqref{ineq:osc:c}. If $r\leq R/4$, by \eqref{eq:holder1}, \eqref{range:q}, and \eqref{eq:norm} we get
\begin{align*}
    &\left(\fint_{B_r}|\nabla v-\mathbf{q}_r|^{\gamma_0}\right)^{1/\gamma_0}
    \leq  Cr^\alpha[\nabla v]_{C^\alpha(B_\frac{R}{4})}\\
    &\leq C\left(\frac{r}{R}\right)^\alpha \left(\fint_{B_{\frac{R}{2}}}|\nabla v-(\nabla v)_{B_\frac{R}{2}}|^p\right)^{1/p}\leq  C\left(\frac{r}{R}\right)^\alpha \left(\fint_{B_{\frac{R}{2}}}|\nabla v-\mathbf{q}_\frac{R}{2}|^p\right)^{1/p}\\
    &\leq C\left(\frac{r}{R}\right)^\alpha\|\nabla v-\mathbf{q}_\frac{R}{2}\|_{L^\infty(B_\frac{R}{2})}
    \leq C\left(\frac{r}{R}\right)^\alpha \left(\fint_{B_{R}}|\nabla v-\mathbf{q}_R|^{\gamma_0}\right)^{1/\gamma_0}.
\end{align*}
If $r>R/4$, we have
\begin{align*}
     \left(\fint_{B_r}|\nabla v-\mathbf{q}_r|^{\gamma_0}\right)^{1/\gamma_0}
    &\leq  \left(\fint_{B_r}|\nabla v-\mathbf{q}_R|^{\gamma_0}\right)^{1/\gamma_0}\\
    &\leq C \left(\frac{r}{R}\right)^\alpha \left(\fint_{B_{R}}|\nabla v-\mathbf{q}_R|^{\gamma_0}\right)^{1/\gamma_0}.
\end{align*}
The theorem is proved.
\end{proof}

\section{Interior pointwise gradient estimates}\label{sec3}
In order to prove the interior pointwise gradient estimates, we follow the outline of arguments given in \cite{nguyen2020pointwise} while replacing their oscillation estimates with our new oscillation estimate in Section \ref{sec2}. We also borrow an idea in \cite{dong2017c1} by estimating the $L^{\gamma_0}$-mean oscillations of solutions, where $\gamma_0\in (0,1)$.

Let $u\in W^{1,p}_{\text{loc}}(\Omega)$ be a solution to \eqref{eq:u} and $B_{2r}(x_0)\subset\subset \Omega$. We consider the unique solution $w\in u+W_0^{1,p}(B_{2r}(x_0))$ to the equation
\begin{equation}\label{eqa:w}
    \left\{
\begin{aligned}
     -\div(A(x,\nabla w)) =&  0 \quad \text{in} \quad B_{2r}(x_0),  &\\
     w =&  u \quad \text{on}\quad \partial B_{2r}(x_0). &\\
\end{aligned}
\right.
\end{equation}

We first recall an interior reverse H\"older inequality that can be found in \cite[Theorem 6.7]{MR1962933}.
\begin{lemma}[\cite{nguyen2020pointwise}, Lemma 3.1]
Let $w$ be a solution to \eqref{eqa:w}. There exists a constant ${\theta}_1>p$ depending only on $n$, $p$, and $\lambda$ such that for any $t>0$, the estimate
\begin{equation}\label{reverseholder}
    \left (\fint_{B_{\rho/2}(y)}(|\nabla w|+s)^{{\theta}_1}\,dx\right)^\frac{1}{{\theta}_1}\leq C\left(\fint_{B_{\rho}(y)}(|\nabla w|+s)^t\,dx\right)^\frac{1}{t}
\end{equation}
holds for all $B_\rho(y)\subset B_{2r}(x_0)$, where $C=C(n,p,\lambda,t)>0$.
\end{lemma}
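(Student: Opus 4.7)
The plan is to combine a Caccioppoli estimate with the Sobolev--Poincar\'e inequality and Gehring's lemma to obtain a reverse H\"older inequality at some exponent $\theta_{1}>p$, and then use an interpolation-plus-iteration argument to lower the right-hand exponent from $p$ down to an arbitrary $t>0$. For the Caccioppoli step, fix $B_{\rho}(y)\subset B_{2r}(x_0)$, pick a cutoff $\eta\in C_c^{\infty}(B_{\rho}(y))$ with $\eta\equiv 1$ on $B_{\rho/2}(y)$ and $|\nabla\eta|\leq C/\rho$, test the weak form of \eqref{eqa:w} with $\varphi=\eta^{p}(w-(w)_{B_{\rho}(y)})$, and exploit \eqref{ineq:growth}--\eqref{ineq:elliptic} together with Young's inequality to absorb the resulting $|\nabla w|^{p}$-type term on the left. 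This should yield
\begin{equation*}
\fint_{B_{\rho/2}(y)}(|\nabla w|+s)^{p}\,dx\leq C\fint_{B_{\rho}(y)}\left|\frac{w-(w)_{B_{\rho}(y)}}{\rho}\right|^{p}\,dx + Cs^{p}.
\end{equation*}

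Next I would apply the Sobolev--Poincar\'e inequality with exponent $p_{\ast}=np/(n+p)$ (if $p\geq n$, any smaller exponent works even better) to bound the first term on the right by $C\big(\fint_{B_{\rho}(y)}(|\nabla w|+s)^{p_{\ast}}\,dx\big)^{p/p_{\ast}}$, and incorporate the trailing $s^{p}$ into the integrand. Since $p_{\ast}<p$, this is precisely the hypothesis of Gehring's self-improving integrability lemma (cf.\ \cite[Theorem 6.7]{MR1962933}), which produces some $\theta_{1}>p$ depending only on $n$, $p$, $\lambda$ together with a constant $C>0$ such that
\begin{equation*}
\left(\fint_{B_{\rho/2}(y)}(|\nabla w|+s)^{\theta_{1}}\,dx\right)^{1/\theta_{1}}\leq C\left(\fint_{B_{\rho}(y)}(|\nabla w|+s)^{p}\,dx\right)^{1/p}.
\end{equation*}

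To pass from $p$ to an arbitrary $t>0$: if $t\geq p$, H\"older's inequality on the right-hand ball suffices. For $0<t<p$ I would work on concentric balls $B_{r}(y)\subset B_{R}(y)\subset B_{\rho}(y)$, apply the reverse H\"older just obtained on each such pair, and combine it with the standard interpolation
$\|f\|_{L^{p}}\leq \|f\|_{L^{\theta_{1}}}^{\tau}\|f\|_{L^{t}}^{1-\tau}$ (where $\tau\in(0,1)$ satisfies $1/p=\tau/\theta_{1}+(1-\tau)/t$), together with Young's inequality, to derive
\begin{equation*}
\|f\|_{L^{\theta_{1}}(B_{r})}\leq \epsilon\,\|f\|_{L^{\theta_{1}}(B_{R})}+C_{\epsilon}(R-r)^{-\kappa}\|f\|_{L^{t}(B_{R})}
\end{equation*}
for $f=|\nabla w|+s$ and some $\kappa=\kappa(n,p,t,\theta_{1})$. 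A standard hole-filling iteration across shrinking concentric balls then absorbs the $\|f\|_{L^{\theta_{1}}(B_{R})}$ term on the right and yields \eqref{reverseholder}.

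No genuine obstacle is expected: because $w$ solves a homogeneous equation with no measure datum, the Caccioppoli estimate is routine and Gehring's lemma applies in its classical form, while the below-$p$ passage is elementary interpolation. The only minor bookkeeping concerns the $s$-term, which is harmless because adding the constant $s$ to $|\nabla w|$ is compatible with all of the inequalities used above.
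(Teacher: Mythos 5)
Your proposal is correct and follows the standard route (Caccioppoli with $\eta^{p}(w-(w)_{B_\rho})$, Sobolev--Poincar\'e, Gehring, then interpolation to lower the exponent below $p$). The paper itself does not reprove this interior statement --- it simply cites \cite{nguyen2020pointwise} and \cite[Theorem 6.7]{MR1962933} --- but the same chain of ideas is spelled out in the paper's proof of the boundary analogue, Lemma~\ref{lem5.1}, so your argument matches what the authors rely on. One small bookkeeping point: $p_{\ast}=np/(n+p)$ can be $<1$ when $p<n/(n-1)$, so one should take the Sobolev--Poincar\'e exponent to be $q$ with $\max\{1,np/(n+p)\}\le q<p$ (as the paper does in Lemma~\ref{lem5.1}); this does not affect the conclusion since $p>1$ always leaves room for such a $q$.
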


We also have the following comparison result, which generalizes and refines similar results in \cite{duzaar2010gradient, nguyen2019good, nguyen2020existence}.

\begin{lemma}\label{lem:u-w}
Let $w$ be a solution to \eqref{eqa:w} and assume that $p\in (1,2)$. Then for any  $\gamma_0\in(0,2-p]$ when $p\in \big(\frac{3n-2}{2n-1},2\big)$ or $\gamma_0\in \big(0,\frac{(p-1)n}{n-1}\big)$ when $p\in \big(1, \frac{3n-2}{2n-1}\big]$, it holds that
    \begin{align*}
         &\left(\fint_{B_{2r}(x_0)}|\nabla u-\nabla w|^{\gamma_0}\,dx \right)^{1/\gamma_0}\\
         &\leq C\left[\frac{|\mu|(B_{2r}(x_0))}{r^{n-1}} \right]^\frac{1}{p-1}+C\frac{|\mu|(B_{2r}(x_0))}{r^{n-1}}\fint_{B_{2r}(x_0)}(|\nabla u|+s)^{2-p}\,dx,
    \end{align*}
where $C$ is a constant depending only on $n$, $p$, $\lambda$, and $\gamma_0$.
\end{lemma}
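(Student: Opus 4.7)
The plan is to compare $u$ and $w$ through the weak formulations of their equations, exploit the singular monotonicity of $A(x,\cdot)$ to bound a ``degenerate'' energy of the difference, and then transfer this bound to an $L^{\gamma_0}$ estimate on $\nabla u-\nabla w$ through an algebraic identity combined with H\"older's inequality. The two admissible ranges of $\gamma_0$ will correspond to two ways of balancing the H\"older exponents, matched exactly at $p=\frac{3n-2}{2n-1}$.

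\textbf{Step 1: testing the difference equation.} Subtracting the weak formulations of \eqref{eq:u} and \eqref{eqa:w}, the function $u-w\in W^{1,p}_0(B_{2r}(x_0))$ satisfies
\begin{equation*}
\int_{B_{2r}(x_0)}\langle A(x,\nabla u)-A(x,\nabla w),\nabla\varphi\rangle\,dx = \int_{B_{2r}(x_0)}\varphi\,d\mu
\end{equation*}
for every admissible $\varphi$. Since $u-w$ need not be a legitimate test function against $\mu$, I would first plug in $\varphi=T_k(u-w)$ (the truncation at level $k$), use the strong monotonicity \eqref{ineq:elliptic} to obtain
\begin{equation*}
\int_{\{|u-w|<k\}}(s^2+|\nabla u|^2+|\nabla w|^2)^{(p-2)/2}|\nabla u-\nabla w|^2\,dx\le C k|\mu|(B_{2r}(x_0)),
\end{equation*}
and then run the Boccardo--Gallou\"et Marcinkiewicz iteration to send $k\to\infty$ and conclude the distributional gradient bound
\begin{equation*}
\bigl|\{x\in B_{2r}(x_0):|\nabla u-\nabla w|>\tau\}\bigr|\le C\,\tau^{-n(p-1)/(n-1)}\bigl[|\mu|(B_{2r}(x_0))\bigr]^{n/(n-1)}.
\end{equation*}
This is the gateway estimate for controlling $\|\nabla u-\nabla w\|_{L^{\gamma_0}}$ whenever $\gamma_0<n(p-1)/(n-1)$.

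\textbf{Step 2: algebraic identity and H\"older balancing.} Writing $E=|\nabla u-\nabla w|$ and $F=(s^2+|\nabla u|^2+|\nabla w|^2)^{1/2}$, I would use the pointwise identity
\begin{equation*}
E^{\gamma_0}=\bigl[F^{p-2}E^2\bigr]^{\gamma_0/2}\,F^{(2-p)\gamma_0/2}
\end{equation*}
and apply H\"older's inequality with conjugate exponents $2/\gamma_0$ and $2/(2-\gamma_0)$. The first factor is controlled by the energy inequality from Step 1 (or rather by a Marcinkiewicz-type interpolation of it), producing the term $[|\mu|(B_{2r}(x_0))/r^{n-1}]^{1/(p-1)}$ after rescaling. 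The second factor reduces to an integral of $F^{(2-p)\gamma_0/(2-\gamma_0)}$; the reverse H\"older inequality \eqref{reverseholder} for $w$ combined with the triangle inequality then replaces $|\nabla w|$ by $|\nabla u|$, leaving a secondary term with the structure $\bigl[|\mu|(B_{2r})/r^{n-1}\bigr]\fint(|\nabla u|+s)^{2-p}$ from the second summand in the stated inequality. The choice $\gamma_0=2-p$ is precisely what makes the two H\"older exponents match $2-p$ on the right-hand side, and this is admissible exactly when $2-p<n(p-1)/(n-1)$, i.e.\ when $p>\tfrac{3n-2}{2n-1}$. In the more singular range $p\le \tfrac{3n-2}{2n-1}$, the Marcinkiewicz ceiling forces $\gamma_0<n(p-1)/(n-1)\le 2-p$, and the same balancing yields the same form of estimate but with the weaker restriction on $\gamma_0$.

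\textbf{Anticipated main obstacle.} The principal technical difficulty is making the truncation-and-limit argument in Step 1 rigorous and uniform in $k$: one must justify the passage through Marcinkiewicz spaces without losing the sharp dependence on $|\mu|(B_{2r}(x_0))/r^{n-1}$, and control the level sets $\{|u-w|<k\}$ in a way that survives the limit. A secondary subtlety is locating the exponent threshold $(3n-2)/(2n-1)$: it appears as the crossing point of the two curves $\gamma_0=2-p$ and $\gamma_0=n(p-1)/(n-1)$, and tracking this crossing honestly through the H\"older computation is what produces the dichotomy in the statement of the lemma and, ultimately, the distinction between the pointwise estimate of Theorem \ref{thm:int} and the Lipschitz estimate of Theorem \ref{thm:int:lip}.
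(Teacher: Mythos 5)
Your overall architecture---test with truncations, use the strong monotonicity to get a degenerate energy bound, then transfer via a pointwise algebraic identity and H\"older---is the same skeleton the paper follows. But Step~1 contains a genuine gap. You assert that the Boccardo--Gallou\"et iteration yields
\[
\bigl|\{x\in B_{2r}(x_0):|\nabla u-\nabla w|>\tau\}\bigr|\le C\,\tau^{-n(p-1)/(n-1)}\bigl[|\mu|(B_{2r}(x_0))\bigr]^{n/(n-1)},
\]
i.e.\ a Marcinkiewicz bound on $\nabla u-\nabla w$ depending only on $|\mu|$. In the singular range $p<2$ this does not follow from the energy inequality: the weight $(s^2+|\nabla u|^2+|\nabla w|^2)^{(p-2)/2}$ is \emph{small} precisely where the gradients are large, so the Caccioppoli-type estimate $\int_{\{|u-w|<k\}}F^{p-2}|\nabla(u-w)|^2\le Ck|\mu|$ is strictly weaker than $\int_{\{|u-w|<k\}}|\nabla(u-w)|^p\le Ck|\mu|$, which is what the classical iteration needs. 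Any level-set estimate you can actually extract will carry a second term depending on $\||\nabla u|+s\|_{L^{2-p}}$---indeed, this is exactly why the lemma's conclusion has the secondary summand $\frac{|\mu|(B_{2r})}{r^{n-1}}\fint(|\nabla u|+s)^{2-p}$. The paper does not prove any weak bound on $\nabla(u-w)$ at all; it instead derives a weak-$L^{q}$ bound on $u-w$ with $q=\frac{n}{2(n-1)}$ (whose right-hand side already involves $Q_1=\||\nabla u|+s\|_{L^{2-p}(B_2)}$), converts it via Chebyshev into a weak-$L^{q/(1+q)}$ bound on the energy density $g^s(u,w)$, and only then integrates.

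Your Step~2 is also off in the exponents: H\"older with $(2/\gamma_0,\,2/(2-\gamma_0))$ applied to $E^{\gamma_0}=g^{\gamma_0/2}F^{(2-p)\gamma_0/2}$ would require $\int g\,dx$ to be finite, which it is not (only a weak-Lorentz bound on $g$ is available). The paper first splits pointwise, via the triangle inequality and Young, into $|\nabla(u-w)|\le Cg^{1/p}+Cg^{1/2}(|\nabla u|+s)^{(2-p)/2}$, and then applies H\"older with $(2/p,\,2/(2-p))$; the resulting integrals of $g^{\gamma_0/p}$ are controlled by the weak-type norm precisely because $\gamma_0/p<\frac{q}{1+q}$ when $\gamma_0\le 2-p$ and $p>\frac{3n-2}{2n-1}$ (and this is where the threshold really enters, not merely as the crossing of $2-p$ and $\frac{n(p-1)}{n-1}$). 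Finally, the reverse H\"older inequality \eqref{reverseholder} is not used in the paper's proof of this lemma---$|\nabla w|$ is traded for $|\nabla u|$ pointwise through the triangle inequality inside the energy density, not through a $w$-regularity argument. You have correctly identified the high-level plan and the role of the exponent threshold, but the weak-type bookkeeping---which carries the actual content---needs to be done as in the paper, starting from $u-w$ rather than from $\nabla u-\nabla w$.
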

\begin{proof}
The case when $p\in \big(1, \frac{3n-2}{2n-1}\big]$ and $s=0$ was proved in \cite[Lemma 2.1]{nguyen2020existence} and their proof also works for the case when $p\in \big(1, \frac{3n-2}{2n-1}\big]$ and $s>0$. Therefore, we only focus on the case when $p\in \big(\frac{3n-2}{2n-1},2\big)$ and $s\geq 0$. By scaling invariance (see \cite[Remark 4.1]{duzaar2010gradient} for example), we may assume that $B_{2r}(x_0)=B_2$ and $|\mu|(B_{2})=1$. For $k>0$, using
$$
\varphi_1=T_{2k}(u-w):=\max\big\{\min\{u-w,2k\},-2k\big\}
$$ as a test function in \eqref{eq:u} and \eqref{eqa:w} and recalling \eqref{ineq:elliptic}, we have
\begin{equation}\label{test1}
   \int_{B_2\cap \{|u-w|<2k\}} g^s(u,w)\leq Ck \quad \text{with}\quad g^s(u,w)=\frac{|\nabla(u-w)|^2}{(|\nabla w|+|\nabla u|+s)^{2-p}}.
\end{equation}
By the triangle inequality, we have
\begin{align*}
    |\nabla(u-w)|&=g^s(u,w)^\frac{1}{2}(|\nabla w|+|\nabla u|+s)^\frac{2-p}{2}\\
    &\leq g^s(u,w)^\frac{1}{2}(|\nabla(u -w)|+2|\nabla u|+s)^\frac{2-p}{2}\\
    &\leq Cg^s(u,w)^\frac{1}{2}|\nabla(u- w)|^\frac{2-p}{2}+Cg^s(u,w)^\frac{1}{2}(|\nabla u|+s)^\frac{2-p}{2}.
\end{align*}
Using Young's inequality with exponents $\frac{2}{p}$ and $\frac{2}{2-p}$, we obtain
\begin{equation}\label{ineq:g1}
    |\nabla(u-w)|\leq Cg^s(u,w)^\frac{1}{p}+Cg^s(u,w)^\frac{1}{2} (|\nabla u|+s)^\frac{2-p}{2}.
\end{equation}
Now we set
$$
E_k=B_2\cap \{k<|u-w|<2k\}\quad \text{and}\quad
F_k=B_2\cap \{|u-w|>k\}.
$$
Using the Sobolev inequality, H\"older's inequality, and \eqref{ineq:g1}, we obtain
\begin{equation}\label{ineq:sob1}
\begin{aligned}
    &k|\{x:|u-w|>2k\}\cap B_2|^{\frac{n-1}{n}}\leq C \Big(\int_{B_2}|T_{2k}(u-w)-T_{k}(u-w)|^\frac{n}{n-1}\Big)^\frac{n-1}{n}\\
    &\leq C \int_{E_k}|\nabla (u-w)|\\
    &\leq C \int_{E_k}\Big( g^s(u,w)^\frac{1}{p}+g^s(u,w)^\frac{1}{2}(|\nabla u|+s)^\frac{2-p}{2}\Big)\\
    &\leq C|E_k|^\frac{p-1}{p}\Big(\int_{E_k}g^s(u,w)\Big)^\frac{1}{p}
    +C\Big(\int_{E_k}g^s(u,w)\Big)^\frac{1}{2}\Big(\int_{E_k}(|\nabla u|+s)^{2-p}\Big)^\frac{1}{2}.
\end{aligned}
\end{equation}
From \eqref{test1} and \eqref{ineq:sob1}, we get
$$
k^\frac{1}{2}|F_{2k}|^\frac{n-1}{n}\leq Ck^{-\frac{1}{2}+\frac{1}{p}}|F_k|^\frac{p-1}{p}+CQ_1^\frac{2-p}{2},
$$
where $Q_1:=\||\nabla u|+s\|_{L^{2-p}(B_2)}$.
Therefore, by taking the sup in $k\in (0,\infty)$, we obtain
$$
\|u-w\|_{L^{\frac{n}{2(n-1)},\infty}(B_2)}^{1/2}\leq C \|u-w\|_{L^{\frac{2-p}{2(p-1)},\infty}(B_2)}^{1/p-1/2}+C Q_1^\frac{2-p}{2}.
$$
Since $\frac{3n-2}{2n-1}<p<2$, we have
$$\frac{n}{2(n-1)}>\frac{2-p}{2(p-1)},$$
which implies

$$
\|u-w\|_{L^{\frac{n}{2(n-1)},\infty}(B_2)}^{1/2}\leq C \|u-w\|_{L^{\frac{n}{2(n-1)},\infty}(B_2)}^{1/p-1/2}+C Q_1^\frac{2-p}{2}.
$$
Thus, by Young's inequality, we obtain
\begin{equation}\label{ineq:weak}
    \|u-w\|_{L^{\frac{n}{2(n-1)},\infty}(B_2)}\leq C+C Q_1^\frac{2-p}{2}.
\end{equation}
Let $k,\;l> 0$ and $q=\frac{n}{2(n-1)}$. By the Chebyshev inequality and \eqref{test1}, we have
\begin{align*}
    &|\{x:g^s(u,w)>l\}\cap B_2|\\
    &\leq  |\{x:|u-w|>k\}\cap B_2|+|\{x:|u-w|\leq k,\;g^s(u,w)>l\}\cap B_2|\\
    &\leq  Ck^{-q}\|u-w\|^q_{L^{q,\infty}(B_2)}+\frac{1}{l}\int_{B_2\cap\{x:|u-w|\leq k\}}g^s(u,w)\,dx\\
    &\leq  Ck^{-q}\|u-w\|^q_{L^{q,\infty}(B_2)}+\frac{Ck}{l}.
\end{align*}
By choosing
$$
k=\Big[l\|u-w\|^q_{L^{q,\infty}(B_2)}\Big]^{\frac{1}{1+q}},
$$
we get
$$
l^\frac{q}{1+q} |\{x:g^s(u,w)>l\}\cap B_2|\leq C \|u-w\|^\frac{q}{1+q}_{L^{q,\infty}(B_2)},
$$
Therefore, by taking the sup in $l\in (0,\infty)$, we obtain
\begin{equation}\label{g1:weak}
\|g^s(u,w)\|_{L^{\frac{q}{1+q},\infty}(B_2)}\leq C\|u-w\|_{L^{q,\infty}(B_2)}.
\end{equation}
Let $\gamma_0\in (0,2-p]$. By \eqref{ineq:g1} and H\"older's inequality with exponents $2/p$ and $2/(2-p)$, we get
\begin{equation}\label{ineq:gamma0}
\begin{aligned}
&\int_{B_2}|\nabla(u-w)|^{\gamma_0}\leq C\int_{B_2}\Big(g^s(u,w)^\frac{\gamma_0}{p}+g^s(u,w)^\frac{\gamma_0}{2} (|\nabla u|+s)^\frac{\gamma_0(2-p)}{2}\Big)\\
&\leq C\Big(\int_{B_2}g^s(u,w)^\frac{\gamma_0}{p}\Big)+C\Big(\int_{B_2}g^s(u,w)^{\frac{\gamma_0}{p}}\Big)^\frac{p}{2}\Big(\int_{B_2}(|\nabla u|+s)^{\gamma_0}\Big)^\frac{2-p}{2}\\
&\leq C\|g^s(u,w)\|^{\gamma_0/p}_{L^{\frac{q}{1+q},\infty}(B_2)}
+C\|g^s(u,w)\|^{\gamma_0/2}_{L^{\frac{q}{1+q},\infty}(B_2)} Q_1^{\frac{\gamma_0(2-p)}{2}}.
\end{aligned}
\end{equation}
In the last inequality, we used the fact that
$$
\frac{\gamma_0}{p}<\frac{q}{1+q},\quad \gamma_0\leq 2-p.
$$
Combining \eqref{ineq:weak}, \eqref{g1:weak}, and \eqref{ineq:gamma0}, we have
$$
\int_{B_2}|\nabla(u-w)|^{\gamma_0}\leq C+C Q_1^{\gamma_0(2-p)},
$$
which implies the desired result.
\end{proof}
We now let $v\in w+W_0^{1,p}(B_r(x_0))$ be the unique solution to
\begin{equation}\label{eqa:v}
    \left\{
\begin{aligned}
     -\div(A(x_0,\nabla v)) =&  0 \quad \text{in} \quad B_{r}(x_0), &\\
     v =&  w \quad \text{on}  \quad \partial B_{r}(x_0).  &\\
\end{aligned}
\right.
\end{equation}

By testing \eqref{eqa:w} and \eqref{eqa:v} with $v-w$, we obtain an estimate for the difference $\nabla v-\nabla w$:
\begin{equation}\label{comp}
    \fint_{B_r(x_0)}|\nabla v-\nabla w|^p\,dx\leq C\omega(r)^p\fint_{B_{r}(x_0)}(|\nabla w|+s)^p\,dx.
\end{equation}
Detailed proof
of this result can be found in \cite[Eq. (4.35)]{duzaar2010gradient}.
Thus by (\ref{reverseholder}) and H\"older's inequality, we get
\begin{equation}\label{eq:v-w}
        \fint_{B_r(x_0)}|\nabla v-\nabla w|^{\gamma_0}\,dx\leq C\omega(r)^{\gamma_0}\fint_{B_{2r}(x_0)}(|\nabla w|+s)^{\gamma_0}\,dx.
\end{equation}

For a ball $B_\rho(x)\subset\subset\Omega$ and a function $f\in  W_{\text{loc}}^{1,p}(\Omega)$, there exists $\mathbf{q}_{x,\rho}(f)\in \R^n$ such that
\begin{equation*}
    \left(\fint_{B_\rho(x)}|\nabla f-\mathbf{q}_{x,\rho}(f)|^{\gamma_0}\right)^{1/\gamma_0}=\inf_{\mathbf{q}\in \R^n}\left(\fint_{B_\rho(x)}|\nabla f-\mathbf{q}|^{\gamma_0}\right)^{1/\gamma_0}.
\end{equation*}
We denote $\mathbf{q}_{x,\rho}=\mathbf{q}_{x,\rho}(u)$ and
\begin{equation*}
    \phi(x,\rho)=\inf_{\mathbf{q}\in \R^n}\left(\fint_{B_\rho(x)}|\nabla u-\mathbf{q}|^{\gamma_0}\right)^{1/\gamma_0}.
\end{equation*}
Since
$$
|\mathbf{q}_{x,\rho}-\nabla u(x)|^{\gamma_0}\leq |\mathbf{q}_{x,\rho}-\nabla u(z)|^{\gamma_0}+|\nabla u(z)-\nabla u(x)|^{\gamma_0},
$$
by taking the average over $z\in B_{ \rho}(x)$ and then taking the $\gamma_0$-th root, we obtain
\begin{align*}
    &|\mathbf{q}_{x,\rho}-\nabla u(x)|\leq C\phi(x, \rho)+C\Big(\fint_{B_{ \rho}(x)}|\nabla u(z)-\nabla u(x)|^{\gamma_0}\,dz\Big)^{1/\gamma_0}.
\end{align*}
Therefore, from the definition of $\phi$ and the fact that $0<\gamma_0<1$, we obtain that
\begin{equation}\label{limit}
   \lim_{\rho\rightarrow 0}\mathbf{q}_{x,\rho}=\nabla u(x)
\end{equation}
holds for any Lebesgue point $x\in \Omega$ of the vector-valued function $\nabla u$.

\begin{proposition}\label{prop1}
Suppose that $u\in W_{\text{loc}}^{1,p}(\Omega)$ is a solution to $(\ref{eq:u})$. Then for any $\varepsilon\in(0,1)$ and $B_{2r}(x_0)\subset\subset\Omega$, we have
\begin{equation}\label{ineq:prop}
\begin{aligned}
         \phi(x_0,\varepsilon r)\leq& C\varepsilon^{\alpha}\phi(x_0,r)+C_\varepsilon\left(\frac{|\mu|(B_{2r}(x_0))}{r^{n-1}}\right)^\frac{1}{p-1}\\
         &+C_\varepsilon\frac{|\mu|(B_{2r}(x_0))}{r^{n-1}}
         \fint_{B_{2r}(x_0)}(|\nabla u|+s)^{2-p}\\
         &+C_\varepsilon\omega(r)\left(\fint_{B_{2r}(x_0)}(|\nabla u|+s)^{2-p}\right)^\frac{1}{2-p},
\end{aligned}
\end{equation}
where  $\alpha\in(0,1)$ is the constant in Theorem \ref{thm:osc},  $\gamma_0$ is the same constant as in Lemma \ref{lem:u-w}, $C_\varepsilon$ is a constant depending on $\varepsilon$, $n$, $p$, $\lambda$, and $\gamma_0$, and $C$ is a constant depending on $n$, $p$, $\lambda$, and $\gamma_0$.
\end{proposition}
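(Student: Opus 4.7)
The plan is to run a three-function comparison, exploiting that only the frozen problem enjoys the oscillation estimate proved in Section~\ref{sec2}. In $B_{2r}(x_0)$, I introduce $w$ solving \eqref{eqa:w}, and in $B_r(x_0)$ the function $v$ solving \eqref{eqa:v}. Since $v$ satisfies the hypotheses of Theorem~\ref{thm:osc}, its $L^{\gamma_0}$-mean oscillation decays like $\varepsilon^{\alpha}$, while Lemma~\ref{lem:u-w} and the comparison \eqref{eq:v-w} transfer this decay back to $u$ at the price of the error terms that appear in \eqref{ineq:prop}.

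Concretely, let $\tilde{\mathbf{q}}\in \R^n$ realize the infimum in $(\fint_{B_{\varepsilon r}(x_0)}|\nabla v-\mathbf{q}|^{\gamma_0})^{1/\gamma_0}$. Two applications of the triangle inequality give
\begin{align*}
\phi(x_0,\varepsilon r)&\leq \Bigl(\fint_{B_{\varepsilon r}(x_0)}|\nabla u-\tilde{\mathbf{q}}|^{\gamma_0}\Bigr)^{1/\gamma_0}\\
&\leq C\Bigl(\fint_{B_{\varepsilon r}(x_0)}|\nabla v-\tilde{\mathbf{q}}|^{\gamma_0}\Bigr)^{1/\gamma_0}+C\varepsilon^{-n/\gamma_0}\Bigl(\fint_{B_{r}(x_0)}|\nabla u-\nabla v|^{\gamma_0}\Bigr)^{1/\gamma_0}.
\end{align*}
By Theorem~\ref{thm:osc} with $\rho=\varepsilon r$, $R=r$, the first term is bounded by $C\varepsilon^{\alpha}\inf_{\mathbf{q}}(\fint_{B_r(x_0)}|\nabla v-\mathbf{q}|^{\gamma_0})^{1/\gamma_0}$, and a further triangle inequality against the minimizer for $u$ on $B_r(x_0)$ upgrades this to $C\varepsilon^{\alpha}\phi(x_0,r)+C\varepsilon^{\alpha}(\fint_{B_r(x_0)}|\nabla u-\nabla v|^{\gamma_0})^{1/\gamma_0}$. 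Since $\varepsilon^{\alpha}\leq \varepsilon^{-n/\gamma_0}$, the two residual $|\nabla u-\nabla v|$ contributions combine into a single $C_{\varepsilon}(\fint_{B_r(x_0)}|\nabla u-\nabla v|^{\gamma_0})^{1/\gamma_0}$ term.

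It remains to control $(\fint_{B_r(x_0)}|\nabla u-\nabla v|^{\gamma_0})^{1/\gamma_0}$, which I split as $|\nabla u-\nabla w|+|\nabla w-\nabla v|$ and evaluate on the larger ball $B_{2r}(x_0)$. Lemma~\ref{lem:u-w} directly yields the Riesz contributions
\begin{equation*}
C\Bigl[\frac{|\mu|(B_{2r}(x_0))}{r^{n-1}}\Bigr]^{1/(p-1)}+C\frac{|\mu|(B_{2r}(x_0))}{r^{n-1}}\fint_{B_{2r}(x_0)}(|\nabla u|+s)^{2-p},
\end{equation*}
whereas \eqref{eq:v-w} combined with the triangle inequality gives
\begin{equation*}
\Bigl(\fint_{B_r(x_0)}|\nabla w-\nabla v|^{\gamma_0}\Bigr)^{1/\gamma_0}\leq C\omega(r)\Bigl(\fint_{B_{2r}(x_0)}(|\nabla u|+s)^{\gamma_0}\Bigr)^{1/\gamma_0}+C\omega(r)\Bigl(\fint_{B_{2r}(x_0)}|\nabla u-\nabla w|^{\gamma_0}\Bigr)^{1/\gamma_0}.
\end{equation*}
The key numerical fact is that in both regimes of $p$ the admissible range of $\gamma_0$ forces $\gamma_0\leq 2-p$: for $p\in(\frac{3n-2}{2n-1},2)$ this is built into Lemma~\ref{lem:u-w}, and for $p\in(1,\frac{3n-2}{2n-1}]$ the inequality $\frac{(p-1)n}{n-1}\leq 2-p$ reduces to $p(2n-1)\leq 3n-2$, which is precisely the hypothesis. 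Hence H\"older's inequality promotes the first $\gamma_0$-average above to the $L^{2-p}$-average of $|\nabla u|+s$ appearing in \eqref{ineq:prop}. Finally, using $\omega(r)\leq 1$, the cross term $\omega(r)\cdot(\text{Lemma~\ref{lem:u-w} terms})$ is absorbed into the Riesz terms already produced, and assembling everything with $C_{\varepsilon}=C\varepsilon^{-n/\gamma_0}$ yields \eqref{ineq:prop}.

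The main obstacle I anticipate is the bookkeeping in the final assembly: one has to be careful that the $\omega(r)$ prefactor multiplies only the $L^{2-p}$-energy of $|\nabla u|+s$ and not an additional copy of the Riesz potential, since an extra $\omega(r)\,\mathbf{I}_1^R(|\mu|)$-type contribution would disrupt the Dini-type iteration used in Section~\ref{sec3}. The inequality $\gamma_0\leq 2-p$ is the algebraic ingredient that makes the $L^{\gamma_0}\to L^{2-p}$ promotion available, and the routine estimate $\omega\leq 1$ disposes of the cross term; both facts together are what makes the argument produce exactly the three error terms in \eqref{ineq:prop} rather than a strictly larger family.
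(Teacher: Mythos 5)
Your proof is correct and follows essentially the same route as the paper's: compare against the homogeneous solution $w$ on $B_{2r}$ and the frozen-coefficient solution $v$ on $B_r$, apply Theorem~\ref{thm:osc} to $v$, transfer the decay to $u$ via the triangle inequality (producing the $\varepsilon^{-n/\gamma_0}$ factor), and control $\nabla u-\nabla v$ by splitting through $\nabla w$, invoking Lemma~\ref{lem:u-w} and \eqref{eq:v-w} together with $\omega\le 1$ and $\gamma_0\le 2-p$. Your explicit verification that $\gamma_0<\frac{(p-1)n}{n-1}\le 2-p$ in the regime $p\in\big(1,\frac{3n-2}{2n-1}\big]$ is a useful remark that the paper leaves implicit, but the structure of the argument is the same.
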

\begin{proof}
By Theorem \ref{thm:osc} and the definition of $\mathbf{q}_{x,\rho}(\cdot)$, we have
\begin{equation}\label{ineq:phi}
\begin{aligned}
        &\left(\fint_{B_{\varepsilon r}(x_0)}|\nabla u-\mathbf{q}_{x_0,\varepsilon r}(u)|^{\gamma_0}\right)^{\frac 1{\gamma_0}}
        \le \left(\fint_{B_{\varepsilon r}(x_0)}|\nabla u-\mathbf{q}_{x_0,\varepsilon r}(v)|^{\gamma_0}\right)^{\frac 1{\gamma_0}}\\
        &\leq C\left(\fint_{B_{\varepsilon r}(x_0)}|\nabla v-\mathbf{q}_{x_0,\varepsilon r}(v)|^{\gamma_0}\right)^{\frac 1{\gamma_0}}+C\left(\fint_{B_{\varepsilon r}(x_0)}|\nabla u-\nabla v|^{\gamma_0}\right)^{\frac 1{\gamma_0}}\\
        &\leq C\varepsilon^{\alpha}\left(\fint_{B_r(x_0)}|\nabla v-\mathbf{q}_{x_0,r}(v)|^{\gamma_0}\right)^{\frac 1{\gamma_0}}+ C\varepsilon^{-\frac n{\gamma_0}}\left(\fint_{B_r(x_0)}|\nabla u-\nabla v|^{\gamma_0}\right)^{\frac 1{\gamma_0}}\\
        &\leq C\varepsilon^{\alpha}\left(\fint_{B_r(x_0)}|\nabla v-\mathbf{q}_{x_0,r}(u)|^{\gamma_0}\right)^{\frac 1{\gamma_0}}+ C\varepsilon^{-\frac n{\gamma_0}}\left(\fint_{B_r(x_0)}|\nabla u-\nabla v|^{\gamma_0}\right)^{\frac 1{\gamma_0}}\\
        &\leq C\varepsilon^{\alpha}\left(\fint_{B_r(x_0)}|\nabla u-\mathbf{q}_{x_0,r}(u)|^{\gamma_0}\right)^{\frac 1{\gamma_0}}+ C\varepsilon^{-\frac n{\gamma_0}}\left(\fint_{B_r(x_0)}|\nabla u-\nabla v|^{\gamma_0}\right)^{\frac 1{\gamma_0}}.
\end{aligned}
\end{equation}
Moreover, by (\ref{eq:v-w}) and the fact that $|\omega(r)|\leq1$, one has
\begin{equation}\label{ineq:u-v}
    \begin{aligned}
         &\fint_{B_r(x_0)}|\nabla u-\nabla v|^{\gamma_0}\leq \fint_{B_r(x_0)}|\nabla u-\nabla w|^{\gamma_0}+\fint_{B_r(x_0)}|\nabla w-\nabla v|^{\gamma_0}\\
         &\leq C\fint_{B_{2r}(x_0)}|\nabla u-\nabla w|^{\gamma_0}+C\omega(r)^{\gamma_0}\fint_{B_{2r}(x_0)}(|\nabla w|+s)^{\gamma_0}\\
         &\leq C\fint_{B_{2r}(x_0)}|\nabla u-\nabla w|^{\gamma_0}+C\omega(r)^{\gamma_0}\fint_{B_{2r}(x_0)}(|\nabla u|+s)^{\gamma_0}.
    \end{aligned}
\end{equation}
Thus from \eqref{ineq:phi} and \eqref{ineq:u-v}, we have
\begin{equation}\label{ineq:phi2}
\begin{aligned}
\phi(x_0,\epsilon r)&\leq C\varepsilon^{\alpha}\phi(x_0,r)+C_\epsilon \left(\fint_{B_{2r}(x_0)}|\nabla u-\nabla w|^{\gamma_0}\right)^{1/\gamma_0}\\
&\quad +C_\epsilon\omega(r)\left(\fint_{B_{2r}(x_0)}(|\nabla u|+s)^{\gamma_0}\right)^{1/\gamma_0}.
    \end{aligned}
\end{equation}
Now we can apply Lemma \ref{lem:u-w} to bound the second term on the right-hand side of (\ref{ineq:phi2}) to conclude the proof.
\end{proof}
Now we are ready to prove Theorem \ref{thm:int}.

\begin{proof}[Proof of Theorem \ref{thm:int}]
We prove the theorem at a Lebesgue point $x=x_0$ of the vector-valued function $\nabla u$, assuming that $B_R(x_0)\subset\Omega$. Since $p\in \big(\frac{3n-2}{2n-1},2\big)$, we choose $\gamma_0=2-p$ in Lemma \ref{lem:u-w}. Choose $\varepsilon=\varepsilon(n,p,\lambda,\alpha)\in (0,1/4)$ sufficiently small so that $C\varepsilon^\alpha\leq 1/4$, where $C$ is the constant in (\ref{ineq:prop}).

For an integer $j\ge 0$, set $r_j=\varepsilon^jR$, $B^j=B_{2r_j}(x_0)$,
$$
T_j=\left(\fint_{B^j}(|\nabla u|+s)^{2-p}\,dx\right)^\frac{1}{2-p}, \quad \phi_j=\phi(x_0,r_j), \quad \mathbf{q}_j=\mathbf{q}_{x_0,r_j}.
$$
Applying (\ref{ineq:prop}) yields
\begin{equation*}
    \phi_{j+1}\leq\frac{1}{4}\phi_j+C\left(\frac{|\mu|(B^j)}{r_j^{n-1}}\right)^\frac{1}{p-1}
    +C\frac{|\mu|(B^j)}{r_j^{n-1}}T_j^{2-p}+C\omega(r_j) T_j.
\end{equation*}
Let $j_0$ and $m$ be positive integers to be specified later such that $j_0\le m$.
Summing the above inequality over $j={j_0,j_0+1,\dots,m}$, we obtain
\begin{equation}\label{sum:int}
\begin{aligned}
    \sum_{j=j_0}^{m+1}\phi_j&\leq C\phi_{j_0}+C\sum_{j=j_0}^{m}\left(\frac{|\mu|(B^j)}{r_j^{n-1}}\right)^\frac{1}{p-1}\\
   &\quad  +C\sum_{j=j_0}^m \frac{|\mu|(B^j)}{r_j^{n-1}} T_j^{2-p}+C\sum_{j=j_0}^m\omega(r_j)T_j.
\end{aligned}
\end{equation}
Since
\begin{align*}
   |\mathbf{q}_{j+1}-\mathbf{q}_j|^{\gamma_0}\leq |\mathbf{q}_{j+1}-\nabla u(x)|^{\gamma_0}+|\nabla u(x)-\mathbf{q}_{j}|^{\gamma_0},
\end{align*}
by taking the average over $x\in B_{r_{j+1}}(x_0)$ and then taking $\gamma_0$-th root, we obtain
\begin{align*}
    |\mathbf{q}_{j+1}-\mathbf{q}_{j}|\leq C\phi_j+C\phi_{j+1}.
\end{align*}
Then, by iterating, we get
\begin{align*}
     |\mathbf{q}_{m+1}-\mathbf{q}_{j_0}|\leq C \sum_{j=j_0}^{m+1}\phi_j,
\end{align*}
which together with \eqref{sum:int} implies
\begin{equation}\label{ineq:q1}
\begin{aligned}
    |\mathbf{q}_{m+1}|+\sum_{j=j_0}^{m+1}\phi_j\leq& C\phi_{j_0}+|\mathbf{q}_{j_0}|+C\sum_{j=j_0}^m\left(\frac{|\mu|(B^j)}{r_j^{n-1}}\right)^\frac{1}{p-1}\\
    &+C\sum_{j=j_0}^m \frac{|\mu|(B^j)}{r_j^{n-1}} T_j^{2-p}
    +C\sum_{j=j_0}^m\omega(r_j)T_j.
    \end{aligned}
\end{equation}
By the definition of $\phi_{j_0}$, we have
\begin{align*}
    \phi_{j_0}\leq C \left(\fint_{B^{j_0}}|\nabla u|^{\gamma_0}\,dx\right)^\frac{1}{\gamma_0}\leq C T_{j_0}.
\end{align*}
Since
$$
|\mathbf{q}_{j_0}|^{\gamma_0}\leq |\nabla u(x)-\mathbf{q}_{j_0}|^{\gamma_0}+|\nabla u(x)|^{\gamma_0},
$$
by taking the average over $x\in B_{r_{j_0}}(x_0)$ and taking the $\gamma_0$-th root, we obtain
$$
|\mathbf{q}_{j_0}|\leq C\phi_{j_0}+C \left(\fint_{B^{j_0}}|\nabla u|^{\gamma_0}\,dx\right)^\frac{1}{\gamma_0}\leq C T_{j_0}.
$$
Therefore, \eqref{ineq:q1} implies that
\begin{equation}\label{ineq:q}
\begin{aligned}
    |\mathbf{q}_{m+1}|+\sum_{j=j_0}^{m+1}\phi_j\leq& CT_{j_0}+C\sum_{j=j_0}^m\left(\frac{|\mu|(B^j)}{r_j^{n-1}}\right)^\frac{1}{p-1}\\
    &+C\sum_{j=j_0}^m \frac{|\mu|(B^j)}{r_j^{n-1}} T_j^{2-p}
    +C\sum_{j=j_0}^m\omega(r_j)T_j.
    \end{aligned}
\end{equation}

By \eqref{dini} and the comparison principle for Riemann integrals, there exists $j_0=j_0(n,p,\varepsilon,C,\omega)>1$ sufficiently large such that
\begin{equation}\label{ineq:omg}
    4^\frac{1}{\gamma_0}(2\epsilon)^{-\frac{n}{\gamma_0}}C\sum_{j=j_0}^\infty \omega(r_j)\leq\frac{1}{10},
\end{equation}
where $C$ is the constant in (\ref{ineq:q}).

Note that by the comparison principle for Riemann integrals,
\begin{equation}
                                \label{eq7.13}
    \sum_{j=j_0}^m \frac{|\mu|(B^j)}{r_j^{n-1}} \leq C\int_0^{2r_{j_0-1}} \frac{|\mu|(B_\rho(x_0))}{\rho^{n-1}} \frac{d\rho}{\rho}
\end{equation}
and since $p<2$ we also have
\begin{equation}
                                \label{eq7.14}
    \sum^m_{j=j_0}\left(\frac{|\mu|(B^j)}{r_j^{n-1}}\right)^\frac{1}{p-1}\leq C\left(\int_0^{2r_{j_0-1}} \frac{|\mu|(B_\rho(x_0))}{\rho^{n-1}}
    \frac{d\rho}{\rho}\right)^\frac{1}{p-1}.
\end{equation}
To prove \eqref{ineq:int} at $x=x_0$, it is sufficient to show that
\begin{equation}\label{ineq:result}
    |\nabla u(x_0)|\leq CT_{j_0}+ C\left(\int_0^{2r_{j_0-1}}\frac{|\mu|(B_\rho(x_0))}{\rho^{n-1}}
    \frac{d\rho}{\rho}\right)^\frac{1}{p-1}.
\end{equation}
To this end, we consider the following possibilities.

{\em Case 1:} If $|\nabla u(x_0)|\leq T_{j_0}$, then (\ref{ineq:result}) easily follows.

{\em Case 2:} If $ T_j< |\nabla u(x_0)|, \ \forall j_0\leq j\leq j_1$, and $|\nabla u(x_0)|\leq T_{j_1+1}$,
then since $\gamma_0=2-p<1$, we have
\begin{equation}\label{ineq:grad}
\begin{aligned}
       |\nabla u(x_0)|&\leq\left(\fint_{B^{j_1+1}}(|\nabla u|+s)^{\gamma_0}\,dx\right)^{1/\gamma_0}\\
       &\leq 2^{\frac{1}{\gamma_0}}\left(\fint_{B^{j_1+1}}|\nabla u|^{\gamma_0}\,dx\right)^{1/\gamma_0}+2^{\frac{1}{\gamma_0}}s\\
       &\leq 2^{\frac{1}{\gamma_0}}(2\epsilon)^{-\frac{n}{\gamma_0}}
       \left(\fint_{B_{r_{j_1}}(x_0)}|\nabla u|^{\gamma_0}\,dx\right)^{1/\gamma_0}+2^{\frac{1}{\gamma_0}}s\\
       &\leq 4^\frac{1}{\gamma_0}(2\epsilon)^{-\frac{n}{\gamma_0}}(\phi_{j_1}+|\mathbf{q}_{j_1}|)
       +2^{\frac{1}{\gamma_0}}s,
\end{aligned}
\end{equation}
where the last inequality follows from the definitions of $\phi_{j_1}$ and $\mathbf{q}_{j_1}$.
Now applying (\ref{ineq:q}) with $m=j_1-1$ and using \eqref{eq7.13} and \eqref{eq7.14}, from \eqref{ineq:grad} we get
\begin{align*}
|\nabla u(x_0)|\leq&C'T_{j_0}
+C{''}\left(\int_0^{2r_{j_0-1}}\frac{|\mu|(B_\rho(x_0))}{\rho^{n-1}}
\frac{d\rho}{\rho}\right)^\frac{1}{p-1}\\
&+C{''}\int_0^{2r_{j_0-1}}
\frac{|\mu|(B_\rho(x_0))}{\rho^{n-1}}\frac{d\rho}{\rho}\cdot|\nabla u(x_0)|^{2-p}\\
&+C'\sum_{j=j_0}^{m}\omega(r_j)|\nabla u(x_0)|+2^\frac{1}{\gamma_0}s,
\end{align*}
where $C'=4^\frac{1}{\gamma_0}(2\epsilon)^{-\frac{n}{\gamma_0}}C$, $C$ is the constant in (\ref{ineq:q}), and $C''$ is a constant depending on $n$, $p$, and $\lambda$.
Hence using (\ref{ineq:omg}) and Young's inequality, we find
\begin{align*}
    |\nabla u(x_0)|\leq CT_{j_0}+C \left(\int_0^{2r_{j_0-1}}
    \frac{|\mu|(B_\rho(x_0))}{\rho^{n-1}}\frac{d\rho}{\rho}\right)^\frac{1}{p-1}
    +\frac{1}{5}|\nabla u(x_0)|+Cs.
\end{align*}
This implies (\ref{ineq:result}) as desired.

{\em Case 3:} If $T_j<|\nabla u(x_0)|$ for any $j\geq j_0$, then from \eqref{ineq:q}, \eqref{eq7.13}, and \eqref{eq7.14} we have for any $m>j_0$,
\begin{align*}
    |\mathbf{q}_{m+1}|\leq& CT_{j_0}+C\left(\int_0^{2r_{j_0-1}}
    \frac{|\mu|(B_\rho(x_0))}{\rho^{n-1}}\frac{d\rho}{\rho}\right)^\frac{1}{p-1}\\
    &+C\int_0^{2r_{j_0-1}}
    \frac{|\mu|(B_\rho(x_0))}{\rho^{n-1}}\frac{d\rho}{\rho}\cdot|\nabla u(x_0)|^{2-p}+C\sum_{j=j_0}^m\omega(r_j)|\nabla u(x_0)|\\
    \leq& CT_{j_0}+C\left(\int_0^{2r_{j_0-1}}
    \frac{|\mu|(B_\rho(x_0))}{\rho^{n-1}}\frac{d\rho}{\rho}\right)^\frac{1}{p-1}\\
    &+C\int_0^{2r_{j_0-1}}
    \frac{|\mu|(B_\rho(x_0))}{\rho^{n-1}}\frac{d\rho}{\rho}\cdot|\nabla u(x_0)|^{2-p}+\frac{1}{10}|\nabla u(x_0)|.
\end{align*}
Here we used (\ref{ineq:omg}) in the last inequality. Letting $m\to \infty$ and using \eqref{limit}, we get
\begin{align*}
    |\nabla u(x_0)|\leq& CT_{j_0}+C\left(\int_0^{2r_{j_0-1}}
    \frac{|\mu|(B_\rho(x_0))}{\rho^{n-1}}\frac{d\rho}{\rho}\right)^\frac{1}{p-1}\\
    &+C\int_0^{2r_{j_0-1}}
    \frac{|\mu|(B_\rho(x_0))}{\rho^{n-1}}\frac{d\rho}{\rho}\cdot|\nabla u(x_0)|^{2-p}+\frac{1}{10}|\nabla u(x_0)|.
\end{align*}
Then using Young's inequality, we deduce (\ref{ineq:result}). The proof is completed.
\end{proof}

\section{Interior Lipschitz estimate and modulus of continuity estimate of the gradient}\label{sec4}
In this section, we give the proof of the Lipschitz estimate in Theorem \ref{thm:int:lip} and derive an interior modulus of continuity estimate of $\nabla u$ under the same conditions.
We first adapt the argument in \cite{dong2017c1} to obtain some decay estimates from Proposition \ref{prop1}.
Let $\alpha\in(0,1)$ be the same constant as in Theorem \ref{thm:osc}, ${\alpha_1}\in (0,\alpha)$,  $R\in (0, 1]$ and $B_{R}(x_0)\subset \Omega$. Choose $\varepsilon=\epsilon(n,p,\lambda,\gamma_0,\alpha,\alpha_1)>0$ sufficiently small such that
$$
C\varepsilon^{\alpha-{\alpha_1}}<1\quad \text{and}\quad
\varepsilon^{\alpha_1}<1/4,
$$
where $C$ is the constant in \eqref{ineq:prop}.

Proposition \ref{prop1} implies that for any $B_{2r}(x)\subset\subset B_R(x_0)$,
\begin{equation}
                        \label{eq7.15}
    \begin{aligned}
         \phi(x,\varepsilon r)
         \leq&\varepsilon^{\alpha_1}\phi(x,r)+C\left(\frac{|\mu|(B_{2r}(x))}{r^{n-1}}\right)^\frac{1}{p-1}\\
         &+C\frac{|\mu|(B_{2r}(x))}{r^{n-1}}\left(\|\nabla u\|_{L^\infty(B_{2r}(x))}+s\right)^{2-p}\\
         &+C\omega(r)\left(\|\nabla u\|_{L^\infty(B_{2r}(x))}+s\right).
\end{aligned}
\end{equation}
Denote
\begin{equation}\label{def:gh}
g(x,r)=\frac{|\mu|(B_{r}(x))}{r^{n-1}},\quad h(x,r)=g(x,r)^\frac{1}{p-1}.
\end{equation}
By iteration, from \eqref{eq7.15} we get
\begin{align*}
    \phi(x,\varepsilon^j r)\leq &\varepsilon^{{\alpha_1} j}\phi(x,r)+C\sum_{i=1}^j \varepsilon^{{\alpha_1} (i-1)}h(x,2\varepsilon^{j-i}r)\\
    &+C\sum_{i=1}^j \varepsilon^{{\alpha_1}(i-1)}g(x,2\varepsilon^{j-i}r)\left(\|\nabla u\|_{L^\infty(B_{2r}(x))}+s\right)^{2-p}\\
    &+C\sum_{i=1}^j \varepsilon^{{\alpha_1} (i-1)}\omega(\varepsilon^{j-i}r)\left(\|\nabla u\|_{L^\infty(B_{2r}(x))}+s\right)
\end{align*}
for any $B_{2r}(x)\subset\subset B_R(x_0)$ with $r\in (0,R/4)$. Thus,
\begin{align}\label{iter:int1}
    \phi(x,\varepsilon^jr)\leq &\varepsilon^{{\alpha_1} j}\phi(x,r)+C\Tilde{h}(x,2\varepsilon^j r)+C\Tilde{g}(x,2\varepsilon^j r)\left(\|\nabla u\|_{L^\infty(B_{2r}(x))}+s\right)^{2-p}\notag
    \\&+C\Tilde{\omega}(\varepsilon^j r)\left(\|\nabla u\|_{L^\infty(B_{2r}(x))}+s\right),
\end{align}
where we defined
\begin{equation}
                \label{eq6.42}
\begin{aligned}
    &\Tilde{h}(x,t):=\sum_{i=1}^\infty \varepsilon^{{\alpha_1} i}\left(h(x,\varepsilon^{-i}t)[\epsilon^{-i}t\leq R/2]+h(x,R/2)[\varepsilon^{-i}t>R/2]\right),\\
    &\Tilde{g}(x,t):=\sum_{i=1}^\infty \varepsilon^{{\alpha_1} i}\left(g(x,\varepsilon^{-i}t)[\epsilon^{-i}t\leq R/2]+g(x,R/2)[\varepsilon^{-i}t>R/2]\right),\\
    &\Tilde{\omega}(t):=\sum_{i=1}^\infty \varepsilon^{{\alpha_1} i}\left(\omega(\varepsilon^{-i}t)[\epsilon^{-i}t\leq R/2]+\omega(R/2)[\varepsilon^{-i}t>R/2]\right).
\end{aligned}
\end{equation}
Here we used the Iverson bracket notation, i.e., $[P] = 1$ if $P$ is true and $[P] = 0$ otherwise. We obtain the following lemma from \eqref{iter:int1}.
\begin{lemma}\label{lem:iter1}
Let $B_{2r}(x)\subset\subset B_R(x_0)\subset \Omega$ with $r\le R/4$. There exists a constant $C$ depending only on $\varepsilon$, $n$, $p$, $\lambda$, $\gamma_0$, and $\alpha_1$, such that for any $\rho\in (0,r]$, we have
\begin{itemize}
    \item[(i)]
\begin{equation}\label{iter:int2}
\begin{aligned}
    \phi(x,\rho)&\leq  C\left(\frac{\rho}{r}\right)^{\alpha_1} \phi(x,r)+C\Tilde{h}(x,2\rho)
    \\&\,\,+C\Tilde{g}(x,2\rho)\left(\|\nabla u\|_{L^\infty(B_{2r}(x))}+s\right)^{2-p}
  +C\Tilde{\omega}(\rho)\left(\|\nabla u\|_{L^\infty(B_{2r}(x))}+s\right),
\end{aligned}
\end{equation}
   \item[(ii)]
  \begin{equation}\label{sum1:phi}
\begin{aligned}
    \sum_{j=0}^\infty \phi(x,\varepsilon^j\rho)
    \leq& C\left(\frac{\rho}{r}\right)^{\alpha_1}\phi(x,r)
    +C\int_0^{\rho}\frac{\Tilde{h}(x,t)}{t}\,dt
    \\&+C\left(\|\nabla u\|_{L^\infty(B_{2r}(x))}+s\right)^{2-p}\int_0^{\rho}\frac{\Tilde{g}(x,t)}{t}\,dt\\&
    +C\left(\|\nabla u\|_{L^\infty(B_{2r}(x))}+s\right)\int_0^{\rho}\frac{\Tilde{\omega}(t)}{t}\,dt.
\end{aligned}
\end{equation}
\end{itemize}
\end{lemma}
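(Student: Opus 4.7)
Both parts follow from the already-established one-step iteration \eqref{iter:int1} combined with the explicit series structure of the auxiliary quantities in \eqref{eq6.42}.

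For \textbf{(i)}, given $\rho \in (0, r]$ I would choose the unique integer $j \ge 0$ satisfying $\varepsilon^{j+1} r < \rho \le \varepsilon^j r$, so that $\rho$ and $\varepsilon^j r$ agree up to the multiplicative factor $\varepsilon^{-1}$. First I would reduce to the geometric scale via the elementary enlargement estimate
\[
\phi(x, \rho) \le \left(\frac{\varepsilon^j r}{\rho}\right)^{n/\gamma_0} \phi(x, \varepsilon^j r) \le \varepsilon^{-n/\gamma_0}\phi(x, \varepsilon^j r),
\]
which follows from the definition of $\phi$ because enlarging the ball of integration from $B_\rho(x)$ to $B_{\varepsilon^j r}(x)$ only inflates the $L^{\gamma_0}$ mean by the $\gamma_0$-th root of the volume ratio. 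Then I would plug in \eqref{iter:int1} at scale $\varepsilon^j r$. Three observations then handle the four resulting terms: the decay factor obeys $\varepsilon^{\alpha_1 j} \le \varepsilon^{-\alpha_1}(\rho/r)^{\alpha_1}$; the $L^\infty$ norms are taken over $B_{2r}(x)$ and so are unchanged; and the comparisons
\[
\tilde h(x, 2\varepsilon^j r) \le C_\varepsilon \tilde h(x, 2\rho),\quad \tilde g(x, 2\varepsilon^j r) \le C_\varepsilon \tilde g(x, 2\rho), \quad \tilde\omega(\varepsilon^j r) \le C_\varepsilon \tilde\omega(\rho)
\]
follow from a one-step shift of the summation index in \eqref{eq6.42}, costing at most a factor of $\varepsilon^{-\alpha_1}$.

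For \textbf{(ii)}, I would apply (i) with $\rho$ replaced by $\varepsilon^j \rho$ for each $j \ge 0$ and sum. The leading term yields a geometric series bounded by $C(\rho/r)^{\alpha_1}\phi(x, r)/(1-\varepsilon^{\alpha_1})$. Each of the three remaining sums, e.g.\ $\sum_j \tilde h(x, 2\varepsilon^j\rho)$, is converted to the integral $\int_0^\rho \tilde h(x, t)/t\,dt$ (up to constants absorbed into $C$) by the Riemann-sum-versus-integral comparison already used in \eqref{eq7.13}: another shift of index gives the uniform bound $\tilde h(x, 2\varepsilon^j\rho) \le C\tilde h(x, 2t)$ for $t \in (\varepsilon^{j+1}\rho, \varepsilon^j\rho]$, so integrating against $dt/t$ on this logarithmically unit-sized interval recovers $\tilde h(x, 2\varepsilon^j\rho)\log(1/\varepsilon)$, and summing over $j$ telescopes to the desired integral. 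The same mechanism controls the $\tilde g$ and $\tilde\omega$ sums.

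\textbf{Main obstacle.} The only non-cosmetic step is justifying the shift-of-index comparisons for $\tilde h, \tilde g, \tilde\omega$. Since neither $h(x, \cdot)$ nor $g(x, \cdot)$ is monotone in the radius in general, one cannot simply invoke monotonicity of the defining series; instead one exploits the explicit geometric structure $\sum_i \varepsilon^{\alpha_1 i}\Phi(\min(\varepsilon^{-i}t, R/2))$ in \eqref{eq6.42}, whose values at $t$ and $\varepsilon^{-1}t$ are related by a single reindexing and therefore differ only by a factor of $\varepsilon^{-\alpha_1}$, with the $R/2$-truncation affecting only a bounded number of boundary terms controlled by the same factor. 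Once this comparison is in hand, both (i) and (ii) reduce to bookkeeping.
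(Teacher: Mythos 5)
Your proposal is correct and takes essentially the same route as the paper: part (i) is a one-step application of \eqref{iter:int1} at the geometric scale closest to $\rho$, combined with the almost-monotonicity of $\Tilde{g},\Tilde{h},\Tilde{\omega}$ under bounded scale changes (which the paper isolates as Lemma \ref{lem:t}), and part (ii) follows by summing (i) over geometric scales and passing from the sum to the integral by the Riemann-sum comparison. The only cosmetic difference is that the paper applies \eqref{iter:int1} with $\varepsilon^{-j}\rho$ in place of the starting radius $r$ so that the iteration terminates exactly at $\rho$, whereas you iterate from $r$ down to $\varepsilon^j r$ and then compare $\phi(x,\rho)$ with $\phi(x,\varepsilon^j r)$; both require the same shift-of-index comparison that you correctly identify as the one substantive step.
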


To prove Lemma \ref{lem:iter1}, we need the following technical lemma.
\begin{lemma}\label{lem:t}
Let $B_R(x_0)\subset \Omega$. Then there exist constants $c_1$ and $c_2$ depending on $\varepsilon$, $n$, $p$, and ${\alpha_1}$, such that for any fixed $x\in B_{R}(x_0)$ and any $f\in\{\Tilde{\omega}, \Tilde{g}(x,\cdot), \Tilde{h}(x,\cdot) \}$, it holds that
    $c_1 f(t)\leq f(s)\leq c_2 f(t)$, whenever $0<\varepsilon t\leq s\leq t$.
\end{lemma}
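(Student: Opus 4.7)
The plan is to put all three sums on the same footing by introducing $r_i(u):=\min(\varepsilon^{-i}u,R/2)$, so that the definitions in \eqref{eq6.42} collapse to
\[
\Tilde{\omega}(u)=\sum_{i=1}^\infty \varepsilon^{\alpha_1 i}\omega(r_i(u)),\quad \Tilde{g}(x,u)=\sum_{i=1}^\infty \varepsilon^{\alpha_1 i}g(x,r_i(u)),\quad \Tilde{h}(x,u)=\sum_{i=1}^\infty \varepsilon^{\alpha_1 i}h(x,r_i(u)).
\]
The observation driving the whole proof is that for $0<\varepsilon t\le s\le t$ and every $i\ge 1$, applying the monotone map $\min(\cdot,R/2)$ to the chain $\varepsilon^{-(i-1)}t \le \varepsilon^{-i}s \le \varepsilon^{-i}t$ produces $r_{i-1}(t) \le r_i(s) \le r_i(t)$, and a brief case check on whether $\varepsilon^{-i}t$ exceeds $R/2$ refines this to
\[
\varepsilon r_i(t) \le r_{i-1}(t) \le r_i(s) \le r_i(t).
\]

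For $f=\Tilde{\omega}$ the conclusion is termwise: the upper inequality $r_i(s)\le r_i(t)$ and monotonicity of $\omega$ give $\Tilde{\omega}(s)\le\Tilde{\omega}(t)$, while the lower inequality $r_i(s)\ge r_{i-1}(t)$ combined with the index shift $j=i-1$ yields
\[
\Tilde{\omega}(s)\ge \sum_{i=1}^\infty \varepsilon^{\alpha_1 i}\omega(r_{i-1}(t)) = \varepsilon^{\alpha_1}\sum_{j=0}^\infty \varepsilon^{\alpha_1 j}\omega(r_j(t)) \ge \varepsilon^{\alpha_1}\Tilde{\omega}(t),
\]
so $c_1=\varepsilon^{\alpha_1}$, $c_2=1$ suffice. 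For $f=\Tilde{g}(x,\cdot)$ the function $g(x,\cdot)$ is not itself monotone, but because $r\mapsto |\mu|(B_r(x))$ is non-decreasing we have the free quasi-monotonicity $g(x,r)\le (r'/r)^{n-1}g(x,r')$ whenever $0<r\le r'$. Inserting the chain above gives the termwise two-sided bound
\[
\varepsilon^{n-1}g(x,r_{i-1}(t)) \le g(x,r_i(s)) \le \varepsilon^{-(n-1)}g(x,r_i(t)),
\]
and the same index-shift argument then produces $c_1=\varepsilon^{n-1+\alpha_1}$ and $c_2=\varepsilon^{-(n-1)}$. The identity $h=g^{1/(p-1)}$ yields the analogous termwise bound, and hence the analogous summed bound, for $\Tilde{h}$ with exponent $(n-1)/(p-1)$ in place of $n-1$.

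There is no real obstacle here: the lemma is a doubling-type bookkeeping property for three auxiliary sums built on a geometric scale $\varepsilon^{-i}$. The only point worth a moment's care is that $g$ and $h$ are not themselves monotone, so the clean monotonicity of $\omega$ used in the first case must be replaced by the quasi-monotonicity which comes for free from the fact that $r\mapsto |\mu|(B_r(x))$ is non-decreasing; all resulting constants depend only on $\varepsilon$, $n$, $p$, and $\alpha_1$, as required.
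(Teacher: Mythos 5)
Your proof is correct and follows essentially the same route as the paper: you rewrite each tilde-sum in terms of the truncated radius (your $r_i(u)$ is exactly the paper's $G(x,\varepsilon^{-i}u)$ unwound), use the doubling-type quasi-monotonicity of $g$ coming from monotonicity of $r\mapsto|\mu|(B_r(x))$, and pick up the factor $\varepsilon^{\pm\alpha_1}$ via an index shift. The only cosmetic difference is that you compare termwise via the chain $\varepsilon r_i(t)\le r_{i-1}(t)\le r_i(s)\le r_i(t)$, whereas the paper chains three sum-level inequalities; the constants and the content are the same.
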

\begin{proof}
We will only show the proof for $g$ since the other cases are similar. For fixed $x\in B_{R/4}(x_0)$, we set
\begin{equation*}
   G(x,r)= \left\{
\begin{aligned}
    &g(x,r)\; \qquad  \text{if} \quad 0<r\leq R/2, \\
    &g(x,R/2) \quad \text{if} \quad r>R/2,
\end{aligned}
\right.
\end{equation*}
and observe that by \eqref{eq6.42},
$$
\Tilde{g}(x,r)=\sum_{i=1}^\infty \varepsilon^{{\alpha_1} i}G(x,\varepsilon^{-i}r).
$$
Suppose that $0<\varepsilon t\leq s\leq t$. It is easy to see from the definitions of $g$ and $G$ that $G(x,s)\leq \epsilon^{1-n} G(x,t)$ and therefore $\Tilde{g}(x,s)\leq \epsilon^{1-n} \Tilde{g}(x,t)$. Also the fact that $0<\epsilon s\leq \epsilon t \leq s$ implies that $\Tilde{g}(x,\epsilon t)\leq \epsilon^{1-n} \Tilde{g}(x,s)$. On the other hand,
$$
\Tilde{g}(x,t)=\epsilon^{-{\alpha_1}}\sum_{i=1}^\infty \varepsilon^{{\alpha_1}( i+1)}G(x,\varepsilon^{-(i+1)}\epsilon t)\leq \epsilon^{-{\alpha_1}}\Tilde{g}(x,\epsilon t).
$$
The lemma is proved.
\end{proof}

Now we are ready to prove Lemma \ref{lem:iter1}.
\begin{proof}[Proof of Lemma \ref{lem:iter1}]
We first prove Assertion (i). For given $\rho\in(0,r]$, let $j$ be an integer such that
$\epsilon^{j+1}<{\rho}/{r}\leq \epsilon^j$. Then by \eqref{iter:int1} with $\epsilon^{-j}\rho$ in place of $r$, we get
\begin{align*}
      \phi(x,\rho)\leq &\varepsilon^{{\alpha_1} j}\phi(x,\epsilon^{-j}\rho)+C\Tilde{h}(x,2 \rho)+C\Tilde{g}(x,2\rho)\left(\|\nabla u\|_{L^\infty(B_{2\epsilon^{-j}\rho}(x))}+s\right)^{2-p}
    \\&+C\Tilde{\omega}( \rho)\left(\|\nabla u\|_{L^\infty(B_{2\epsilon^{-j}\rho}(x))}+s\right)\\
    \leq & C \left(\frac{\rho}{r}\right)^{\alpha_1} \phi(x,r)+C\Tilde{h}(x,2\rho)
    +C\Tilde{g}(x,2\rho)\left(\|\nabla u\|_{L^\infty(B_{2r}(x))}+s\right)^{2-p}
    \\&+C\Tilde{\omega}(\rho)\left(\|\nabla u\|_{L^\infty(B_{2r}(x))}+s\right).
\end{align*}
Therefore, Assertion (i) holds. Now applying \eqref{iter:int2} with $\epsilon^j \rho$ in place of $\rho$ and summing in $j$, we get
\begin{align*}
    \sum_{j=0}^\infty \phi(x,\varepsilon^j\rho)
    \leq &C\left(\frac{\rho}{r}\right)^{\alpha_1}\phi(x,r)
    +C\sum_{j=1}^\infty \Tilde{h}(x,2\varepsilon^j \rho)\\
    &+C\left(\|\nabla u\|_{L^\infty(B_{2r}(x))}+s\right)^{2-p}\sum_{j=1}^\infty \Tilde{g}(x,2\varepsilon^j\rho)\\&
    +C\left(\|\nabla u\|_{L^\infty(B_{2r}(x))}+s\right)\sum_{j=1}^\infty\Tilde{\omega}(\varepsilon^j\rho).
\end{align*}
Hence, by using Lemma \ref{lem:t} and the comparison principle for Riemann integrals, we can easily get \eqref{sum1:phi}.
The lemma is proved.
\end{proof}

Recall the definition of $\mathbf{q}_{x,\rho}$ from Section \ref{sec3}.
Since we have
\begin{align*}
    |\mathbf{q}_{x,\varepsilon \rho}-\mathbf{q}_{x,\rho}|^{\gamma_0}\leq|\nabla u(z)-\mathbf{q}_{x,\rho}|^{\gamma_0}+|\nabla u(z)-\mathbf{q}_{x,\varepsilon \rho}|^{\gamma_0},
\end{align*}
by taking the average over $z\in B_{\varepsilon \rho}(x)$ and then taking the $\gamma_0$-th root, we obtain
\begin{align*}
    |\mathbf{q}_{x,\varepsilon \rho}-\mathbf{q}_{x,\rho}|\leq C\phi(x,\varepsilon \rho)+C\phi(x,\rho).
\end{align*}
Then, by iterating, we get
\begin{align*}
    |\mathbf{q}_{x,\varepsilon^j\rho}-\mathbf{q}_{x,\rho}|\leq C\sum_{i=0}^j\phi(x,\varepsilon^i\rho).
\end{align*}
Therefore, by using \eqref{limit}, we obtain that
\begin{equation}\label{ineq:diff}
     |\nabla u(x)-\mathbf{q}_{x,\rho}|\leq C\sum_{j=0}^\infty \phi(x,\varepsilon^j\rho)
\end{equation}
holds for any Lebesgue point $x\in \Omega$ of the vector-valued function $\nabla u$.

Now we are ready to prove the interior Lipschitz estimate.
\begin{proof}[Proof of Theorem \ref{thm:int:lip}]
We prove the theorem around a given point $x=x_0$ assuming that $B_R(x_0)\subset\Omega$ with $R\in (0,1]$ and
\begin{equation}\label{finite Riesz}
    \big\|\mathbf{I}_1^R(|\mu|)\big\|_{L^\infty(B_{R}(x_0))}<\infty.
\end{equation}
We first derive an a priori estimate for the case when $u\in C^1$ and then use approximation to prove the general case.

{\em Step 1: The case when $u\in C^1\big(\,\overline{B_R(x_0)}\,\big)$.}

Using \eqref{sum1:phi} with $\rho=r$ and \eqref{ineq:diff}, we obtain that
\begin{equation*}
\begin{aligned}
   |\nabla u(x)-\mathbf{q}_{x,\rho}|
    \leq& C\phi(x,\rho)
    +C\int_0^{\rho}\frac{\Tilde{h}(x,t)}{t}\,dt
    \\&+C\left(\|\nabla u\|_{L^\infty(B_{2\rho}(x))}+s\right)^{2-p}\int_0^{\rho}\frac{\Tilde{g}(x,t)}{t}\,dt\\&
    +C\left(\|\nabla u\|_{L^\infty(B_{2\rho}(x))}+s\right)\int_0^{\rho}\frac{\Tilde{\omega}(t)}{t}\,dt
\end{aligned}
\end{equation*}
holds for any $B_{2\rho}(x)\subset\subset B_R(x_0)$ with $\rho\in (0, R/4]$.

Note that
$$
|\mathbf{q}_{x,\rho}|\leq C \phi(x,\rho)+C \rho^{-n/\gamma_0}\|\nabla u\|_{L^{\gamma_0}(B_{\rho}(x))}\leq C\rho^{-n/\gamma_0}\|\nabla u\|_{L^{\gamma_0}(B_{\rho}(x))}.
$$
Combining the above two inequalities, we have
\begin{equation}\label{sum1:phi_1}
\begin{aligned}
   |\nabla u(x)|
    \leq& C \rho^{-n/\gamma_0}\|\nabla u\|_{L^{\gamma_0}(B_{\rho}(x))}
    +C\int_0^{\rho}\frac{\Tilde{h}(x,t)}{t}\,dt
    \\&+C\left(\|\nabla u\|_{L^\infty(B_{2\rho}(x))}+s\right)^{2-p}\int_0^{\rho}\frac{\Tilde{g}(x,t)}{t}\,dt\\&
    +C\left(\|\nabla u\|_{L^\infty(B_{2\rho}(x))}+s\right)\int_0^{\rho}\frac{\Tilde{\omega}(t)}{t}\,dt.
\end{aligned}
\end{equation}
Note that $\Tilde{\omega}$ also satisfies the Dini condition \eqref{dini}; see \cite[Lemma 1]{dong2012gradient}. Thus we can take $\rho_0=\rho_0(n,p,\lambda,\omega,\alpha_1,\gamma_0,R)\in(0,R/4]$ sufficiently small such that
$$
C \int_0^{\rho_0}\frac{\Tilde{\omega}(t)}{t}\,dt\leq 3^{-1-n/\gamma_0},
$$
where $C$ is the constant in \eqref{sum1:phi_1}.
Then for any $B_{2\rho}(x)\subset\subset B_R(x_0)$ with $0<\rho\leq \rho_0$, by \eqref{sum1:phi_1} and Young's inequality, we have
\begin{equation}\label{sum1:phi_2}
\begin{aligned}
   |\nabla u(x)|
    \leq& C \rho^{-n/\gamma_0}\|\nabla u\|_{L^{\gamma_0}(B_{\rho}(x))}
    +C\int_0^{\rho}\frac{\Tilde{h}(x,t)}{t}\,dt
    \\&+C\left(\int_0^{\rho}\frac{\Tilde{g}(x,t)}{t}\,dt\right)^{\frac{1}{p-1}}
    + 3^{-n/\gamma_0}\left(\|\nabla u\|_{L^\infty(B_{2\rho}(x))}+s\right).
\end{aligned}
\end{equation}
For $k\geq 1$, we denote $\rho_k=(1-2^{-k})R$. Since $\rho_{k+1}-\rho_k=2^{-k-1}R$, we have $B_{2\rho}(x)\subset B_{\rho_{k+1}}(x_0)$ for any $x\in  B_{\rho_{k}}(x_0)$ and $\rho=2^{-k-2}R$. We take $k_0$ sufficiently large such that $2^{-k_0-2}\leq \rho_0$. Then by \eqref{sum1:phi_2} with $\rho=2^{-k-2}R$, we have for any $k\geq k_0$ that
\begin{align*}
    &\|\nabla u\|_{L^\infty(B_{\rho_k}(x_0))}+s
    \\&\leq 3^{-n/{\gamma_0}}(\|\nabla u\|_{L^\infty(B_{\rho_{k+1}}(x_0))}+s)+C \left(\frac{2^{k+2}}{R}\right)^{n/\gamma_0}\|\nabla u\|_{L^{\gamma_0}(B_{\rho_{k+1}}(x_0))}
\\&\quad+C \sup_{x\in B_{\rho_{k}}(x_0)}\int_0^{\frac{R}{2}}\frac{\Tilde{h}(x,t)}{t}\,dt
+C \sup_{x\in B_{\rho_k}(x_0)} \left(\int_0^{\frac{R}{2}}\frac{\Tilde{g}(x,t)}{t}\,dt\right)^{\frac{1}{p-1}}+s
\end{align*}
Multiplying the above inequality by $3^{-nk/{\gamma_0}}$, and summing the terms with respect to $k=k_0,k_0+1,\cdots$, we obtain that
\begin{align*}
    &\sum_{k=k_0}^\infty 3^{-nk/{\gamma_0}}( \|\nabla u\|_{L^\infty(B_{\rho_k}(x_0))}+s)\\&\leq  \sum_{k=k_0+1}^\infty 3^{-nk/{\gamma_0}}( \|\nabla u\|_{L^\infty(B_{\rho_k}(x_0))}+s)
    +CR^{-n/{\gamma_0}} (\|\nabla u\|_{L^{\gamma_0}(B_{R}(x_0))}+s)
    \\&\quad+C   \sup_{x\in B_{R}(x_0)}\int_0^{\frac{R}{2}}\frac{\Tilde{h}(x,t)}{t}\,dt
    +C \sup_{x\in B_{R}(x_0)} \left(\int_0^{\frac{R}{2}}\frac{\Tilde{g}(x,t)}{t}\,dt\right)^{\frac{1}{p-1}}+Cs,
\end{align*}
where each summation is finite. By subtracting
$$
\sum_{k=k_0+1}^\infty 3^{-nk/{\gamma_0}}( \|\nabla u\|_{L^\infty(B_{\rho_k}(x_0))}+s)
$$
from both sides of the above inequality, we get the following $L^{\infty}$-estimate for $\nabla u$:

\begin{equation}\label{infty-norm1}
    \begin{aligned}
         &\|\nabla u\|_{L^\infty(B_{R/2}(x_0))}+s\leq C R^{-n/{\gamma_0}} \|\nabla u\|_{L^{\gamma_0}(B_{R}(x_0))}
   \\ &\qquad+C   \sup_{x\in B_{R}(x_0)}\int_0^{\frac{R}{2}}\frac{\Tilde{h}(x,t)}{t}\,dt
    +C \sup_{x\in B_{R}(x_0)} \left(\int_0^{\frac{R}{2}}\frac{\Tilde{g}(x,t)}{t}\,dt\right)^{\frac{1}{p-1}}+Cs,
    \end{aligned}
\end{equation}
We can simplify the terms in \eqref{infty-norm1} to get
\begin{equation}\label{infty-norm2}
   \|\nabla u\|_{L^\infty(B_{R/2}(x_0))}
         \leq  C\big\|\mathbf{I}_1^R(|\mu|)\big\|^\frac{1}{p-1}_{L^\infty(B_{R}(x_0))}+CR^{-\frac{n}{2-p}} \||\nabla u|+s\|_{L^{2-p}(B_{R}(x_0))}.
\end{equation}
Indeed, by the definition of $\Tilde{g}$ in \eqref{eq6.42}, we have
\begin{align*}
    \int_0^{R/2}\frac{\Tilde{g}(x,t)}{t}\,dt&=\sum_{i=1}^\infty \varepsilon^{{\alpha_1} i}\int_0^{R/2}\frac{g(x,\epsilon^{-i}t)}{t}[\epsilon^{-i}t\leq R/2]\,dt\\
    &\quad +\sum_{i=1}^\infty \varepsilon^{{\alpha_1} i}\int_0^{R/2}\frac{g(x,R/2)}{t}[\epsilon^{-i}t>R/2]\,dt.
\end{align*}
The first term above is equal to
\begin{align*}
   \sum_{i=1}^\infty\varepsilon^{{\alpha_1} i}\int_0^{\epsilon^i R/2}\frac{g(x,\varepsilon^{-i}t)}{t}\,dt
    =\sum_{i=1}^\infty \varepsilon^{{\alpha_1} i}\int_0^{R/2}\frac{g(x,t)}{t}\,dt
    \leq C \mathbf{I}_1^R(|\mu|)(x).
\end{align*}
The second term is  equal to
\begin{align*}
   \sum_{i=1}^\infty\varepsilon^{{\alpha_1} i}\ln(\varepsilon^{-i})g(x,R/2)
    \leq C g(x,R/2)\leq C \mathbf{I}_1^R(|\mu|)(x).
\end{align*}
Therefore,
\begin{equation}\label{g3}
    \int_0^{{R/2}}\frac{\Tilde{g}(x,t)}{t}\,dt\leq C\mathbf{I}_1^{{R}}(|\mu|)(x).
\end{equation}
We can similarly get
\begin{equation}
\begin{aligned}\label{h3}
   &\int_0^{R/2}\frac{\Tilde{h}(x,t)}{t}\,dt=\sum_{i=1}^\infty \varepsilon^{{\alpha_1} i}\int_0^{R/2}\frac{h(x,t)}{t}\,dt+ \sum_{i=1}^\infty\varepsilon^{{\alpha_1} i}\ln(\varepsilon^{-i})h(x,R/2)
    \\\qquad&\leq C \Big( \int_0^{R}\frac{g(x,t)}{t}\,dt\Big)^\frac{1}{p-1}+C \left(\mathbf{I}_1^{{R}}(|\mu|)(x)\right)^\frac{1}{p-1}
    \leq C \left(\mathbf{I}_1^{{R}}(|\mu|)(x)\right)^\frac{1}{p-1}.
\end{aligned}
\end{equation}
Recalling the fact that $\gamma_0\leq 2-p$ (cf. Lemma \ref{lem:u-w}), using (\ref{g3}), (\ref{h3}), and H\"older's inequality, from (\ref{infty-norm1}) we obtain \eqref{infty-norm2}.

{\em Step 2: The general case.}

We take $r_1\in(0,R)$, $r_2=(R+r_1)/2$, and a sequence of standard mollifiers $\{\varphi_k\}$ such that for any positive integer $k$,
$$
\varphi_k\in C_0^\infty(B_{1/k}(0)),\quad
\varphi_k\geq 0,\quad \text{and}\,
\fint_{B_{1/k}(0)}\varphi_k=1.
$$
Then we mollify $\mu$ and $A$ by setting
$$\mu_k(x)=(\mu*\varphi_k)(x), \quad A_k(x,\xi)=(A(\cdot,\xi)*\varphi_k)(x), \quad x\in B_{r_2}(x_0).$$
We note that $A_k$ is well defined and satisfies the growth, ellipticity and continuity assumptions \eqref{ineq:growth}-\eqref{ineq:osi} in $B_{r_2}(x_0)$ for $k>1/(R-r_2)$.
By the corollary after \cite[Theorem 1]{hedberg1983thin}, \eqref{finite Riesz} implies $\mu\in W^{-1,p'}(B_{r_2}(x_0))$, where $p'=p/(p-1)$, and therefore
\begin{equation}\label{conv1}
 \|\mu_k-\mu\|_{W^{-1,p'}(B_{r_2}(x_0))}\to 0.
\end{equation}
Next we let $u_k\in u+W^{1,p}_0(B_{r_2}(x_0))$ be the unique solution to
\begin{equation}\label{eq:uk}
    \left\{
\begin{aligned}
     -\div(A_k(x,\nabla u_k)) =&  \mu_k\quad \text{in} \quad B_{r_2}(x_0), &\\
     u_k =&  u \quad \text{on}  \quad \partial B_{r_2}(x_0).  &\\
\end{aligned}
\right.
\end{equation}
Choosing $u_k-u$ as a test function in \eqref{eq:uk}, we obtain

\begin{equation}\label{test3}
\begin{aligned}
    &\int_{B_{r_2}(x_0)}\langle A(x,\nabla u_k),\nabla u_k\rangle dx\\
    &=\int_{B_{r_2}(x_0)}\langle A(x,\nabla u_k),\nabla u\rangle \,dx
    +\int_{B_{r_2}(x_0)}(u_k-u)\,d \mu_k.
\end{aligned}
\end{equation}
Using the fundamental theorem of calculus, \eqref{ineq:growth}, \eqref{ineq:elliptic}, and Young's inequality with exponents $p$ and $p/(p-1)$, we have
\begin{align*}
   &\int_{B_{r_2}(x_0)}\langle A(x,\nabla u_k),\nabla u_k\rangle \,dx\\
   &=\int_{B_{r_2}(x_0)}  \Big(\int_0^1 \langle D_\xi A(x,t\nabla u_k)\nabla u_k, \nabla u_k\rangle\; dt+\langle A(x,0),\nabla u_k\rangle\Big)\,dx\\
    &\geq \int_{B_{r_2}(x_0)}  \Big(\int_0^1 \lambda^{-1}(s^2+|\nabla u_k|^2t^2)^\frac{p-2}{2} |\nabla u_k|^2\,dt-\lambda s^{p-1}|\nabla u_k|\Big)\,dx\\
&\geq \lambda^{-1} \int_{B_{r_2}(x_0)}  (s^2+|\nabla u_k|^2)^\frac{p-2}{2} |\nabla u_k|^2 \,dx-\int_{B_{r_2}(x_0)} \lambda s^{p-1}|\nabla u_k|\,dx\\
&\geq c(\lambda, p) \int_{B_{r_2}(x_0)}  |\nabla u_k|^p \,dx-C'(\lambda,p) \int_{B_{r_2}(x_0)}  s^p \,dx .
\end{align*}
On the other hand, using \eqref{ineq:growth} and Young's inequality, we obtain
\begin{align*}
   &\int_{B_{r_2}(x_0)}\langle A(x,\nabla u_k),\nabla u\rangle \,dx +\int_{B_{r_2}(x_0)}(u_k-u)\,d \mu_k \\
   &\leq \lambda\int_{B_{r_2}(x_0)}(s^2+|\nabla u_k|^2)^\frac{p-1}{2}|\nabla u|\,dx+\|u_k-u\|_{W^{1,p}_0(B_{r_2}(x_0))}\|\mu_k\|_{W^{-1,p'}(B_{r_2}(x_0))}\\
   &\leq \frac{1}{4}c(\lambda, p)\int_{B_{r_2}(x_0)}  (|\nabla u_k|+s)^p \,dx+C''(\lambda, p)\int_{B_{r_2}(x_0)}  |\nabla u|^p \,dx\\
   &\quad +\frac{1}{8}c(\lambda, p)\|\nabla u_k-\nabla u\|_{L^p(B_{r_2}(x_0))}^p +C''(\lambda,p)\;\|\mu_k\|_{W^{-1,p'}(B_{r_2}(x_0))}^{p'}.
\end{align*}
Therefore, \eqref{test3} implies that
\begin{equation}\label{bound1}
\|\nabla u_k\|_{L^p(B_{r_2}(x_0))}^p\leq C \||\nabla u|+s\|_{L^p(B_{r_2}(x_0))}^p+C\|\mu_k\|_{W^{-1,p'}(B_{r_2}(x_0))}^{p'},
\end{equation}
where $C$ is a constant not depending on $k$.

Now we recall a well-known inequality
\begin{equation}\label{V}
    c^{-1} \big(s^2+|\xi_1|^2+|\xi_2|^2)^\frac{p-2}{2}\leq \frac{|V(\xi_2)-V(\xi_1)|^2}{|\xi_2-\xi_1|^2}\leq c \big(s^2+|\xi_1|^2+|\xi_2|^2)^\frac{p-2}{2},
\end{equation}
where $c=c(n,p)>1$ is a positive constant and the mapping $V(\cdot)$ is defined as
$$
V(\xi)=(|\xi|^2+s^2)^{\frac{p-2}{4}}\xi, \quad \xi\in \R^n.
$$
Combining \eqref{ineq:elliptic} and \eqref{V} yields
$$
c_0^{-1}|V(\xi_2)-V(\xi_1)|^2 \leq \langle A(x,\xi_2)-A(x,\xi_1),\,\xi_2-\xi_1 \rangle,
$$
for some positive constant $c_0=c_0(n,p,\lambda)$. We note that the above inequality also holds for $A_k$.
Then choosing $(u_k-u)1_{B_{r_2}(x_0)}$ as a test function in \eqref{eq:u} and \eqref{eq:uk}, we have
\begin{align*}
   &c_0^{-1}\int_{B_{r_2}(x_0)}|V(\nabla u_k)-V(\nabla u)|^2\,dx
   \\&\leq\int_{B_{r_2}(x_0)}\langle A_k(x,\nabla u_k)-A_k(x,\nabla u),\nabla u_k-\nabla u\rangle \,dx
   \\&=\int_{B_{r_2}(x_0)}\langle A(x,\nabla u)-A_k(x,\nabla u),\nabla u_k-\nabla u\rangle \,dx +\int_{B_{r_2}(x_0)}(u_k-u)\,d (\mu_k-\mu)
   \\&\leq \|A(\cdot,\nabla u)-A_k(\cdot,\nabla u)\|_{L^{p/(p-1)}(B_{r_2}(x_0))}\cdot\|u_k-u\|_{W^{1,p}_0(B_{r_2}(x_0))}
   \\&\quad+\|u_k-u\|_{W^{1,p}_0(B_{r_2}(x_0))}\cdot\|\mu_k-\mu\|_{W^{-1,p'}(B_{r_2}(x_0))}.
\end{align*}
From the definition of $A_k$, by using the Minkowski inequality and \eqref{ineq:osi}, we obtain
$$
\|A(\cdot,\nabla u)-A_k(\cdot,\nabla u)\|_{L^{p/(p-1)}(B_{r_2}(x_0))}\leq \lambda \omega(1/k)\Big(\int_{B_{r_2}(x_0)}(s^2+|\nabla u|^2)^{\frac{p}{2}}\,dx\Big)^\frac{p-1}{p},
$$
which together with \eqref{conv1} and \eqref{bound1}, yields
$$
\int_{B_{r_2}(x_0)}|V(\nabla u_k)-V(\nabla u)|^2\,dx\to 0.
$$
By \eqref{V}, we have
$$
|\nabla u_k-\nabla u|^p\leq c|V(\nabla u_k)-V(\nabla u)|^p\big(|\nabla u_k|^2+|\nabla u|^2+s^2\big)^\frac{p(2-p)}{4}
$$
and therefore using H\"older's inequality with exponents $2/p$ and $2/(2-p)$, we obtain that
\begin{align*}
  &\int_{B_{r_2}(x_0)}|\nabla u_k-\nabla u|^p\,dx\\&\leq c\Big(\int_{B_{r_2}(x_0)}|V(\nabla u_k)-V(\nabla u)|^2\,dx\Big)^\frac{p}{2}\Big(\int_{B_{r_2}(x_0)}\big(|\nabla u_k|^2+|\nabla u|^2+s^2\big)^\frac{p}{2}\,dx\Big)^\frac{2-p}{2},
\end{align*}
which implies that
$$\nabla u_k\to \nabla u \quad \text{strongly in } L^{p}(B_{r_2}(x_0)).$$
Thus there exists a subsequence $\{k_j\}$ such that $\nabla u_{k_j}\to\nabla u$ almost everywhere in $B_{r_2}(x_0)$.

Since $A_k$ and $\mu_k$ are smooth in $x$, by the classical regularity theory (see, for instance, \cite{dibenedetto1983local},\cite{tolksdorf1984regularity}), we know that $u_k\in C^{1,\alpha}_{\text{loc}}(B_{r_2}(x_0))$. Therefore the Lipschitz estimate \eqref{infty-norm2} from Step 1 holds for $u_k$ in $B_{r_1/2}(x_0)$. Namely,
$$
\|\nabla u_k\|_{L^\infty(B_{r_1/2}(x_0))}
         \leq C\big\|\mathbf{I}_1^{r_1}(|\mu_k|)\big\|^\frac{1}{p-1}_{L^\infty(B_{r_1}(x_0))}+C r_1^{\frac{n}{2-p}} \||\nabla u_k|+s\|_{L^{2-p}(B_{r_1}(x_0))}.
$$
Note that by direct computation, for any $t>0$, it follows that
$$
\mu_k(B_t(x))=\int_{\R^n}\mu(B_t(x-y))\varphi_k(y)dy.
$$
Therefore, for sufficiently large $k$, by the Fubini--Tonelli theorem we have
$$
\big\|\mathbf{I}_1^{r_1}(|\mu_k|)\big\|_{L^\infty(B_{r_1}(x_0))}\leq \big\|\mathbf{I}_1^{r_1}(|\mu|)\big\|_{L^\infty(B_{r_2}(x_0))}.
$$
Thus by taking $k=k_j\nearrow\infty$ and then $r_1\nearrow R$, we obtain the Lipschitz estimate \eqref{ineq:int:lip} around $x=x_0$.
\end{proof}

In the rest of the section, we derive an interior modulus of continuity estimate of $\nabla u$ under the conditions of Theorem \ref{thm:int:lip}. Recall that we fixed an $\varepsilon\in (0,1/4)$ sufficiently small such that
$$
C\varepsilon^{\alpha-{\alpha_1}}<1\quad \text{and}\quad
\varepsilon^{\alpha_1}<1/4,
$$
where $C$ is the constant in \eqref{ineq:prop}, $\alpha\in(0,1)$ is the same constant as in Theorem \ref{thm:osc} and ${\alpha_1}\in (0,\alpha)$. We also took a ball $B_{R}(x_0)\subset \Omega$ with $R\in (0,1]$. Similar to  \eqref{eq6.42}, we define
\begin{equation}\label{def:tilde}
\begin{aligned}
 &\Tilde{\omega}(t)=\sum_{i=1}^\infty \varepsilon^{{\alpha_1} i}\left(\omega(\varepsilon^{-i}t)[\epsilon^{-i}t\leq R/2]+\omega(R/2)[\varepsilon^{-i}t>R/2]\right),\\
 &\Tilde{\mathbf{I}}_1^\rho(|\mu|)(x)=\sum_{i=1}^\infty \varepsilon^{{\alpha_1} i}\left(\mathbf{I}_1^{\varepsilon^{-i}\rho}(|\mu|)(x)[\varepsilon^{-i}\rho\leq R/2]+\mathbf{I}_1^{R/2}(|\mu|)(x)[\varepsilon^{-i}\rho>R/2]\right),\\
&\Tilde{\mathbf{W}}_{1/p,p}^\rho(|\mu|)(x)\\
&=\sum_{i=1}^\infty \varepsilon^{{\alpha_1} i}\left(\mathbf{W}_{1/p,p}^{\varepsilon^{-i}\rho}(|\mu|)(x)[\varepsilon^{-i}\rho\leq R/2]+\mathbf{W}_{1/p,p}^{R/2}(|\mu|)(x)[\varepsilon^{-i}\rho>R/2]\right).
\end{aligned}
\end{equation}
Here we also used the Iverson bracket notation, i.e., $[P] = 1$ if $P$ is true and $[P] = 0$ otherwise, and $\mathbf{I}_1$ and $\mathbf{W}_{1/p,p}$ are the Riesz and Wolff potentials defined in \eqref{def:riesz} and \eqref{def:wolff}, respectively. We note that since $1/(p-1)>1$, we have
\begin{equation}
                                \label{eq12.00}
\mathbf{W}_{1/p,p}^{\rho}(|\mu|)(x)\le C \big(\mathbf{I}_1^{2\rho}(|\mu|)(x)\big)^{\frac{1}{p-1}},
\end{equation}
so that
$\Tilde{\mathbf{W}}_{1/p,p}^\rho(|\mu|)(x)$ and $\Tilde{\mathbf{I}}_1^\rho(|\mu|)(x)$ are bounded and converge to zero as $\rho\to 0$ as long as $\mathbf{I}_1^{R}(|\mu|)(x)$ is finite.

Our interior modulus of continuity estimate is stated as follows.
\begin{theorem}
                    \label{thm4.1}
Assume the conditions of Theorem \ref{thm:int:lip} and ${\alpha_1}\in (0,\alpha)$, where $\alpha$ is the constant in Theorem \ref{thm:osc}. Then there exist a constant 
 $C=C(n,p,\lambda,\alpha_1,\omega)$, such that for any $R\in (0,1]$, $B_R(x_0)\subset \Omega,\; x,y\in B_{R/4}(x_0)$ being Lebesgue points of the vector-valued function $\nabla u$, it holds that

\begin{equation}\label{eq7.10}
\begin{aligned}
    &|\nabla u(x)-\nabla u(y)|\\
    &\leq C\,\mathbf{M}\Big[\left(\frac{\rho}{R}\right)^{\alpha_1}+\int_0^{\rho}\frac{\Tilde{\omega}(t)}{t}\,dt\Big]
    +C\,\big\|\Tilde{\mathbf{W}}_{1/p,p}^{\rho}(|\mu|)\big\|_{L^\infty(B_{R/4}(x_0))}\\
    &\quad +C\,\mathbf{M}^{2-p}
    \big\|\Tilde{\mathbf{I}}_1^{\rho}(|\mu|)\big\|_{L^\infty(B_{R/4}(x_0))}
\end{aligned}
\end{equation}
where $\rho=|x-y|$, $\Tilde{\omega}$, $\Tilde{\mathbf{W}}_{1/p,p}$, and $\Tilde{\mathbf{I}}_1$ are defined in \eqref{def:tilde}, and
$$
\mathbf{M}:=R^{-\frac{n}{2-p}}\||\nabla u|+s\|_{L^{{2-p}}(B_{R}(x_0))}+\big\|\mathbf{I}_1^{R}(|\mu|)\big\|^\frac{1}{p-1}_{L^\infty(B_{R}(x_0))}.
$$
\end{theorem}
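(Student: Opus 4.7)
The plan is to combine the interior Lipschitz bound of Theorem \ref{thm:int:lip} with the excess-decay estimate of Lemma \ref{lem:iter1}. The Lipschitz bound lets me replace every occurrence of $\|\nabla u\|_{L^\infty(\cdot)}+s$ in Lemma \ref{lem:iter1} by $C\mathbf{M}$; the continuity of $\nabla u$ with the desired modulus then follows from a midpoint triangle inequality for the $L^{\gamma_0}$-best constants $\mathbf{q}_{\cdot,\rho}$ and the fact, recorded in \eqref{ineq:diff}, that $|\nabla u(x)-\mathbf{q}_{x,\rho}|\le C\sum_{j\ge 0}\phi(x,\varepsilon^j\rho)$ at every Lebesgue point.

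\textbf{Main steps.} If $\rho\ge R/8$, the estimate is trivial from Theorem \ref{thm:int:lip}, since $|\nabla u(x)-\nabla u(y)|\le 2\|\nabla u\|_{L^\infty(B_{R/2}(x_0))}\le C\mathbf{M}\le C\mathbf{M}(\rho/R)^{\alpha_1}$. So assume $\rho<R/8$ and let $z=(x+y)/2\in B_{R/4}(x_0)$; then $B_{\rho/2}(x)\cup B_{\rho/2}(y)\subset B_{\rho}(z)$ and $B_{R/4}(x)\cup B_{R/4}(y)\subset B_{R/2}(x_0)\subset\subset B_R(x_0)$. Decompose
\begin{align*}
|\nabla u(x)-\nabla u(y)|&\le|\nabla u(x)-\mathbf{q}_{x,\rho/2}|+|\mathbf{q}_{x,\rho/2}-\mathbf{q}_{z,\rho}|\\
&\quad+|\mathbf{q}_{z,\rho}-\mathbf{q}_{y,\rho/2}|+|\mathbf{q}_{y,\rho/2}-\nabla u(y)|.
\end{align*}
For the two middle terms, averaging the trivial inequality $|\mathbf{q}_{x,\rho/2}-\mathbf{q}_{z,\rho}|^{\gamma_0}\le |\mathbf{q}_{x,\rho/2}-\nabla u(\cdot)|^{\gamma_0}+|\mathbf{q}_{z,\rho}-\nabla u(\cdot)|^{\gamma_0}$ over $B_{\rho/2}(x)\subset B_\rho(z)$ (and using $|B_\rho(z)|/|B_{\rho/2}(x)|=2^n$) yields $|\mathbf{q}_{x,\rho/2}-\mathbf{q}_{z,\rho}|\le C\phi(x,\rho/2)+C\phi(z,\rho)$, and likewise at $y$. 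For the two outer terms, apply \eqref{ineq:diff} followed by Lemma \ref{lem:iter1}(ii) with $r=R/8$; since $B_{2r}(x)\subset B_{R/2}(x_0)$, Theorem \ref{thm:int:lip} gives $\|\nabla u\|_{L^\infty(B_{2r}(x))}+s\le C\mathbf{M}$, and taking $\mathbf{q}=0$ in the infimum defining $\phi$ also gives $\phi(x,R/8)\le C\mathbf{M}$. The outcome is
\begin{align*}
|\nabla u(x)-\mathbf{q}_{x,\rho/2}|&\le C\mathbf{M}\Big(\frac{\rho}{R}\Big)^{\alpha_1}+C\int_0^{\rho/2}\frac{\tilde h(x,t)}{t}\,dt\\
&\quad+C\mathbf{M}^{2-p}\int_0^{\rho/2}\frac{\tilde g(x,t)}{t}\,dt+C\mathbf{M}\int_0^{\rho/2}\frac{\tilde\omega(t)}{t}\,dt,
\end{align*}
and the analogous bound at $y$; Lemma \ref{lem:iter1}(i) gives the parallel bound for $\phi(x,\rho/2)$ and $\phi(z,\rho)$ that controls the middle terms.

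\textbf{Conclusion and main difficulty.} It remains to identify the series integrals with the tilded potentials of \eqref{def:tilde}. Expanding $\tilde g$ and $\tilde h$ according to \eqref{eq6.42}, performing the substitution $s=\varepsilon^{-i}t$, and arguing exactly as in the derivations leading to \eqref{g3}--\eqref{h3} yields
$$
\int_0^{\rho/2}\frac{\tilde g(x,t)}{t}\,dt\le C\,\tilde{\mathbf{I}}_1^{\rho}(|\mu|)(x),\qquad \int_0^{\rho/2}\frac{\tilde h(x,t)}{t}\,dt\le C\,\tilde{\mathbf{W}}_{1/p,p}^{\rho}(|\mu|)(x).
$$
Since the tilded potentials are monotone non-decreasing in their scale, taking the supremum over $x\in B_{R/4}(x_0)$ (also applied to the $z$-dependence) collects the integrals into the $L^\infty$ norms appearing in the statement and produces \eqref{eq7.10}. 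The principal technical care lies in this potential-identification step and in keeping all length scales aligned ($r=R/8$, $\rho/2$ versus $\rho$, and $B_{R/4}$ versus $B_{R/2}$) so the tilded potentials appear exactly as stated; no new conceptual ingredient beyond Lemma \ref{lem:iter1} and Theorem \ref{thm:int:lip} is needed.
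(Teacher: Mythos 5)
Your proposal is correct and follows essentially the same strategy as the paper's proof: reduce $|\nabla u(x)-\nabla u(y)|$ to (sums of) $\phi$'s via the triangle inequality and a comparison of the best-approximating vectors, apply Lemma \ref{lem:iter1} to control those $\phi$'s, invoke Theorem \ref{thm:int:lip} to replace $\|\nabla u\|_{L^\infty}+s$ by $C\mathbf{M}$, and finally identify the resulting series integrals with the tilded potentials as in \eqref{g3}--\eqref{h3}. The only difference is cosmetic: you route the comparison of $\mathbf{q}_{x,\cdot}$ and $\mathbf{q}_{y,\cdot}$ through the midpoint $z=(x+y)/2$ at scales $\rho/2$ and $\rho$, whereas the paper directly compares $\mathbf{q}_{x,\rho}$ and $\mathbf{q}_{y,\rho}$ by averaging over the lens $B_\rho(x)\cap B_\rho(y)$ (which has measure comparable to $\rho^n$ when $\rho=|x-y|$); both devices give $|\mathbf{q}_{x,\cdot}-\mathbf{q}_{y,\cdot}|\lesssim \phi(x,\cdot)+\phi(y,\cdot)$ up to constants depending only on $n$ and $\gamma_0$, so the two arguments are interchangeable.
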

Note that due to \eqref{eq12.00}, the term $\big\|\Tilde{\mathbf{W}}_{1/p,p}^{\rho}(|\mu|)\big\|_{L^\infty(B_{R/4}(x_0))}$ in \eqref{eq7.10} can be replaced with the sup norm of a summation of the truncated Riesz potentials similar to \eqref{def:tilde}.

\begin{proof}[Proof of Theorem \ref{thm4.1}]
For any $x,y\in B_{R/4}(x_0)$  being Lebesgue points of $\nabla u$, by the triangle inequality, we have
\begin{align*}
    &|\nabla u(x)-\nabla u(y)|^{\gamma_0}\\
    &\leq |\nabla u(x)-\mathbf{q}_{x,\rho}|^{\gamma_0}+|\mathbf{q}_{x,\rho}-\mathbf{q}_{y,\rho}|^{\gamma_0}+|\nabla u(y)-\mathbf{q}_{y,\rho}|^{\gamma_0}\\
    &\leq 2\sup_{y_0\in B_{R/4}(x_0)}|\nabla u(y_0)-\mathbf{q}_{y_0,\rho}|^{\gamma_0}+|\nabla u(z)-\mathbf{q}_{x,\rho}|^{\gamma_0}+|\nabla u(z)-\mathbf{q}_{y,\rho}|^{\gamma_0}.
\end{align*}
We set $\rho=|x-y|$, take the average over $z\in B(x,\rho)\cap B(y,\rho)$, and then take the $\gamma_0$-th root to get
\begin{align}
    |\nabla u(x)-\nabla u(y)|&\leq C \sup_{y_0\in B_{R/4}(x_0)}|\nabla u(y_0)-\mathbf{q}_{y_0,\rho}|+C\phi(x,\rho)+C\phi(y,\rho)\notag\\
    &\leq C \sup_{y_0\in B_{R/4}(x_0)}\sum_{j=0}^\infty \phi(y_0,\varepsilon^j\rho)+C \sup_{y_0\in B_{R/4}(x_0)}\phi(y_0,\rho)\notag\\
                \label{eq7.56}
    &\leq  C \sup_{y_0\in B_{R/4}(x_0)}\sum_{j=0}^\infty \phi(y_0,\varepsilon^j\rho).
\end{align}
Here we used (\ref{ineq:diff}) in the second inequality.

If $\rho< R/8$, by using \eqref{eq7.56}, (\ref{sum1:phi}) with $R/8$ in place of $r$, and the fact that
\begin{align*}
    B_{R/4}(y_0)\subset B_{R/2}(x_0)\quad \forall y_0\in B_{R/4}(x_0),
\end{align*}
we obtain
\begin{equation}\label{eq:diff}
\begin{aligned}
     &|\nabla u(x)-\nabla u(y)|\\
     &\leq  C \left(\frac{\rho}{R}\right)^{\alpha_1} \|\nabla u\|_{L^\infty(B_{R/2}(x_0))}+C \sup_{y_0\in B_{R/4}(x_0)}\int_0^{\rho}\frac{\Tilde{h}(y_0,t)}{t}\,dt
    \\&\quad +C\left(\|\nabla u\|_{L^\infty(B_{R/2}(x_0))}+s\right)^{2-p}\sup_{y_0\in B_{R/4}(x_0)}\int_0^{\rho}\frac{\Tilde{g}(y_0,t)}{t}\,dt\\&
    \quad +C\left(\|\nabla u\|_{L^\infty(B_{R/2}(x_0))}+s\right)\int_0^{\rho}\frac{\Tilde{\omega}(t)}{t}\,dt.
\end{aligned}
\end{equation}
Clearly, (\ref{eq:diff}) still holds when $\rho\geq R/8$.

We can simplify the terms in \eqref{eq:diff} as follows.
For any $y_0\in B_{R/4}(x_0)$ and $\rho\in (0,R/2)$, by the definition of $\Tilde{g}$ in \eqref{eq6.42}, we have
\begin{align*}
    \int_0^\rho\frac{\Tilde{g}(y_0,t)}{t}\,dt&=\sum_{i=1}^\infty \varepsilon^{{\alpha_1} i}\int_0^\rho\frac{g(y_0,\epsilon^{-i}t)}{t}[\epsilon^{-i}t\leq R/2]\,dt\\
    &\quad +\sum_{i=1}^\infty \varepsilon^{{\alpha_1} i}\int_0^\rho\frac{g(y_0,R/2)}{t}[\epsilon^{-i}t>R/2]\,dt.
\end{align*}
Recalling the definition of $\Tilde{\mathbf{I}}_1$ from \eqref{def:tilde}, the first term above is equal to
\begin{align*}
   &\sum_{i=1}^\infty\varepsilon^{{\alpha_1} i}\Big(\int_0^\rho\frac{g(y_0,\varepsilon^{-i}t)}{t}\,dt[\varepsilon^{-i}\rho\leq R/2]+\int_0^{\epsilon^i R/2}\frac{g(y_0,\varepsilon^{-i}t)}{t}\,dt[\varepsilon^{-i}\rho>R/2]\Big)\\
    &=\sum_{i=1}^\infty \varepsilon^{{\alpha_1} i}\Big(\int_0^{\varepsilon^{-i}\rho}\frac{g(y_0,t)}{t}\,dt[\varepsilon^{-i}\rho\leq R/2]+\int_0^{R/2}\frac{g(y_0,t)}{t}\,dt[\varepsilon^{-i}\rho>R/2]\Big)\\
    &=\sum_{i=1}^\infty \varepsilon^{{\alpha_1} i}\Big(\mathbf{I}_1^{\varepsilon^{-i}\rho}(|\mu|)(y_0)[\varepsilon^{-i}\rho\leq R/2]+\mathbf{I}_1^{R/2}(|\mu|)(y_0)[\varepsilon^{-i}\rho>R/2]\Big)\\
    &=\Tilde{\mathbf{I}}_1^\rho(|\mu|)(y_0).
\end{align*}
The second term is  equal to
\begin{align*}
  \sum_{i=1}^\infty\varepsilon^{{\alpha_1} i}[\epsilon^{-i}\rho>R/2]\ln(2\varepsilon^{-i}\rho/R)g(y_0,R/2).
\end{align*}
Let $K$ be the positive integer such that
$ \epsilon^{-K}\rho>R/2$ and $\epsilon^{-(K-1)}\rho\leq R/2$. Then we have
\begin{align*}
    &\sum_{i=1}^\infty\varepsilon^{{\alpha_1} i}[\epsilon^{-i}\rho>R/2]\ln(2\varepsilon^{-i}\rho/R)
    =\sum_{i=K}^\infty\varepsilon^{{\alpha_1} i}\ln(2\varepsilon^{-i}\rho/R)\\
    &=\epsilon^{\alpha_1K}\sum_{i=K}^\infty\varepsilon^{{\alpha_1} (i-K)}\Big(\ln(2\varepsilon^{-K}\rho/R)+(i-K)\ln(\epsilon^{-1})\Big)\\
    &\leq  \Big(\frac{2\rho}{R}\Big)^{\alpha_1}\sum_{i=K}^\infty\varepsilon^{{\alpha_1} (i-K)}(i-K+1)\ln(\epsilon^{-1})\leq C \Big(\frac{\rho}{R}\Big)^{\alpha_1}.
\end{align*}
Therefore,
\begin{equation}\label{g}
\begin{aligned}
     \int_0^{\rho}\frac{\Tilde{g}(y_0,t)}{t}\,dt\leq& \Tilde{\mathbf{I}}_1^{\rho}(|\mu|)(y_0)+C(\rho/R)^{\alpha_1}g(y_0,R/2)\\
     \leq&\Tilde{\mathbf{I}}_1^{\rho}(|\mu|)(y_0)+C(\rho/R)^{\alpha_1}\mathbf{I}_1^R(|\mu|)(y_0).
\end{aligned}
\end{equation}

We can similarly get the following estimate
\begin{equation}
\begin{aligned}\label{h}
    &\int_0^\rho\frac{\Tilde{h}(y_0,t)}{t}\,dt\leq  \sum_{i=1}^\infty
    \left(\varepsilon^{{\alpha_1} i}\int_0^{\varepsilon^{-i}\rho}h(y_0,t)\,\frac{dt}{t}[\varepsilon^{-i}\rho\leq R/2]\right.\\
    &\qquad \left. +\int_0^{R/2}h(y_0,t)\,\frac{dt}{t}[\varepsilon^{-i}\rho>R/2]\right)+C\left(\frac{\rho}{R}\right)^{\alpha_1} h(y_0,R/2)\\
    &\qquad\qquad\qquad\leq\Tilde{\mathbf{W}}_{1/p,p}^{\rho}(|\mu|)(y_0)+C(\rho/R)^{\alpha_1}\big(\mathbf{I}_1^R(|\mu|)(y_0)\big)^{1/(p-1)}.
\end{aligned}
\end{equation}
Using (\ref{ineq:int:lip}), (\ref{g}), and (\ref{h}), from (\ref{eq:diff}) we obtain \eqref{eq7.10}. The theorem is proved.
\end{proof}

\begin{proof}[Proof of Theorem \ref{thm1.2}]
Since the set of Lebesgue points of $\nabla u$ is dense in $\Omega$, it suffices to show the right-hand side of \eqref{eq7.10} converges to zero when $\rho\to 0$. In fact, we have
\begin{align*}
    &\|\Tilde{\mathbf{I}}_1^{\rho}(|\mu|)\|_{L^\infty(B_{R/4}(x_0))}
\leq\sum_{i=1}^\infty \varepsilon^{{\alpha_1} i}\left( \|\mathbf{I}_1^{\varepsilon^{-i}\rho}(|\mu|)\|_{L^\infty(B_{R/4}(x_0))}[\varepsilon^{-i}\rho\leq R/2]\right.\\
&\qquad\left.+\|\mathbf{I}_1^{R/2}(|\mu|)\|_{L^\infty(B_{R/4}(x_0))}[\varepsilon^{-i}\rho>R/2]\right),
\end{align*}
which must converge to $0$ by using \eqref{asp1} and the dominated convergence theorem. We can similarly prove the convergence of other terms in \eqref{eq7.10}.
\end{proof}

\begin{proof}[Proof of Corollary \ref{thm1.3}]
By \cite[Lemma 3]{MR2729305} with $p=2$ and $k=1$, the assumption \eqref{asp2} implies \eqref{asp1}. Therefore Corollary \ref{thm1.3} follows from Theorem \ref{thm1.2}.
\end{proof}

\begin{proof}[Proof of Corollary \ref{thm1.4}]
This is an immediate corollary of Theorem \ref{thm1.2} since the assumption \eqref{asp1} is verified by \eqref{asp3} and \eqref{asp4}.
\end{proof}

\begin{proof}[Proof of Corollary \ref{thm1.8}]
We choose $\alpha_1\in (\beta,\alpha)$ in Theorem \ref{thm4.1}. Then Corollary \ref{thm1.8} follows by a direct computation using \eqref{eq7.10}.
\end{proof}


\section{Global gradient estimates for the \texorpdfstring{$p$}{p}-Laplacian equations}\label{sec5}
This section is devoted to the proof of the global pointwise gradient estimate in Theorem \ref{thm:bdry}, the Lipschitz estimate in Theorem \ref{thm:bdry:lip}, Corollary \ref{thm:glo:lip}, as well as the derivation of a global modulus of continuity estimate of $\nabla u$ stated in Theorem \ref{thm:bdry2} for the following (possibly nondegenerate) $p$-Laplace equation with Dirichlet boundary condition:
\begin{equation}\label{eq:p}
    \left\{
\begin{aligned}
      -\div\left(a(x)(|\nabla u|^2+s^2)^\frac{p-2}{2}\nabla u\right)&=\mu \quad \text{in} \,\, \Omega,  &\\
     u &=  0 \quad \text{on}\,\, \partial \Omega, &\\
\end{aligned}
\right.
\end{equation}
where  $a(\cdot)$ satisfies \eqref{dini}, \eqref{bdd}, and \eqref{dini-cts}, and $\Omega$ has a $C^{1,\text{Dini}}$ boundary characterized by $R_0$ and $\omega_0$ as in Definition \ref{def:bdry}.

First, we derive a gradient estimate around any point $x_0\in \partial\Omega$. Without loss of generality, we assume that $x_0=0\in \partial \Omega$. Then we can choose a local coordinate around $x_0=0$ and a function $\chi$ as in Definition \ref{def:bdry} such that $\chi(0')=0$. Let
$$
\Gamma(y)=(y_1+\chi(y'),y')\quad \text{and}\quad
\Lambda(x)= \Gamma^{-1}(x)=(x_1-\chi(x'),x').
$$
Note that the determinants of the Jacobian of $\Gamma(\cdot)$ and $\Lambda(\cdot)$ are equal to $1$.
Since $\Omega$ has $C^{1,\text{Dini}}$ boundary, from the proof of \cite[Lemma 2.2]{choi2019gradient}, there exists $R_1=R_1(\omega_0,R_0)\in(0,R_0)$ such that
\begin{equation}\label{chi}
   |\nabla_{x'}\chi(x')|\leq 1/2 \quad \text{if} \quad |x'|\leq R_1,
\end{equation}
\begin{equation}\label{bdry}
    \Omega_{r/2}\subset \Gamma(B^+_r)\subset \Omega_{2r} \quad \forall\, r\in(0,R_1/2].
\end{equation}
Therefore, there exist constants $c_1(n)$ and $c_2(n)$ depending only on $n$, such that for any $x\in \Bar{\Omega}$ and $0<r\leq R_1$,
\begin{equation}\label{volume}
    c_1(n) r^n\leq  |\Omega_r(x)| \leq c_2(n)r^n.
\end{equation}

Now we use the technique of flattening the boundary. We denote
$u_1(y)=u(\Gamma(y))$, $a_1(y)=a(\Gamma(y))$, and $\mu_1(A)=\mu(\Gamma(A))$
for any Borel set $A\subset \R^n$. Then $u_1$ satisfies
\begin{equation}\label{eq:u1}
    \left\{
\begin{aligned}
     \div_y\left(a_1(y)(|D\Lambda\; D_y u_1|^{2}+s^2)^\frac{p-2}{2}(D\Lambda)^T D\Lambda \;D_y u_1\right)=\;&\mu_1 \quad  \text{in} \,\, B^+_{{R_1}/{2}},\\
     u_1=\;&0 \quad \text{on}\,\, B_{{R_1}/{2}}\cap\partial \mathbb{R}^n_+.
\end{aligned}
\right.
\end{equation}
We set
\begin{align*}
    A_1(y,\xi)=a_1(y)(|D\Lambda\; \xi|^{2}+s^2)^\frac{p-2}{2}(D\Lambda)^T D\Lambda \;\xi.
\end{align*}
By direct computations with \eqref{chi} in hand, $A_1$ satisfies the following conditions with $\omega_1=\omega+\omega_0$ and some constant $\lambda_1=\lambda_1(n,p,\lambda)$:
\begin{equation}\label{growth1}
    |A_1(y,\xi)|\leq\lambda_1(s^2+|\xi|^2)^{(p-1)/2}, \quad |D_\xi A_1(y,\xi)|\leq\lambda_1(s^2+|\xi|^2)^{(p-2)/2},
\end{equation}
\begin{equation}\label{elliptic1}
    \left\langle D_\xi A_1(y,\xi)\eta,\eta \right\rangle\geq\lambda_1^{-1}(s^2+|\xi|^2)^{(p-2)/2}|\eta|^2,
\end{equation}
\begin{equation}\label{osc:bdry1}
    |A_1(y,\xi)-A_1(y_0,\xi)|\leq \lambda_1 \omega_1(|y-y_0|)(s^2+|\xi|^2)^{(p-1)/2}
\end{equation}
for every $y,y_0\in B^{+}_{R_1/2}$ and every $(\xi,\eta)\in\mathbb{R}^n\times\mathbb{R}^n\backslash\{(0,0)\}$.

Suppose that $4r\leq R_1$. We now consider the unique solution $w\in u_1+W_0^{1,p}(B^+_{2r})$ to the equation
\begin{equation}\label{eq:w1}
    \left\{
\begin{aligned}
     -\div_y(A_1(y,\nabla_y w)) =&  0 \quad \text{in} \,\, B^+_{2r},  \\
     w =&  u_1 \quad \text{on}\,\, \partial B^+_{2r}. \\
\end{aligned}
\right.
\end{equation}

We first derive a boundary version of the reverse H\"older's inequality.
\begin{lemma}
                    \label{lem5.1}
Let w be a solution to \eqref{eq:w1}. There exists a constant ${\theta_1}>p$ depending only on $n$, $p$, and $\lambda$, such that for any $t>0$, the estimate
\begin{equation}\label{reverse}
    \left (\fint_{B^+_{\rho/2}(y_0)}(|\nabla_y w|+s)^{{\theta_1}}\,dx\right)^{1/{\theta_1}}\leq C\left(\fint_{B^+_{\rho}(y_0)}(|\nabla_y w|+s)^t\,dx\right)^{1/t}
\end{equation}
holds for all $B^+_\rho(y_0)\subset B^+_{2r}$, where $C=C(n,p,\lambda,t)>0$.
\end{lemma}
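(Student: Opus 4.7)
The plan is to establish this boundary reverse H\"older inequality via the classical Gehring-type self-improvement scheme, generalized to the half-ball geometry. The starting point is that $w$ vanishes on $B_{2r}\cap \partial\mathbb{R}^n_+$ in the trace sense (since $w\in u_1+W_0^{1,p}(B_{2r}^+)$ and $u_1$ has zero trace on the flat portion by \eqref{eq:u1}), which is what will let us treat half-balls touching the flat boundary.

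The first step is to derive a Caccioppoli-type estimate in two cases. For half-balls $B_\rho^+(y_0)\subset B_{2r}^+$ with $y_0\in\{y_1=0\}$ (so that the test function range reaches the flat boundary), I test the weak form of \eqref{eq:w1} against $\eta^p w$, where $\eta$ is a standard cutoff supported in $B_\rho(y_0)$ with $\eta\equiv 1$ on $B_{\rho/2}(y_0)$. Using the ellipticity and growth conditions \eqref{growth1}--\eqref{elliptic1} of $A_1$ together with Young's inequality in the standard way yields
\begin{equation*}
\fint_{B_{\rho/2}^+(y_0)}(|\nabla w|+s)^p\,dy \leq C\fint_{B_{\rho}^+(y_0)}\Bigl(\frac{|w|}{\rho}+s\Bigr)^p\,dy.
\end{equation*}
Because $w$ vanishes on the flat boundary, the Sobolev--Poincar\'e inequality on half-balls gives $\rho^{-1}\|w\|_{L^{p_*}(B_\rho^+(y_0))}\leq C\|\nabla w\|_{L^{p_*}(B_\rho^+(y_0))}$ for the conjugate Sobolev exponent $p_*=np/(n+p)<p$. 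For interior balls $B_\rho(y_0)\subset B_{2r}^+$ one uses the standard Caccioppoli with $w-(w)_{B_\rho(y_0)}$ and the usual Poincar\'e inequality. In either case one arrives at the reverse H\"older inequality with an exponent below $p$:
\begin{equation*}
\Bigl(\fint_{B_{\rho/2}^+(y_0)}(|\nabla w|+s)^{p}\,dy\Bigr)^{1/p}\leq C\Bigl(\fint_{B_{\rho}^+(y_0)}(|\nabla w|+s)^{p_*}\,dy\Bigr)^{1/p_*}.
\end{equation*}

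Second, I invoke Gehring's lemma in the version adapted to half-balls (see e.g.\ Giaquinta, or the reference \cite{MR1962933}), which upgrades this to the existence of some $\theta_1=\theta_1(n,p,\lambda)>p$ and constant $C=C(n,p,\lambda)$ such that
\begin{equation*}
\Bigl(\fint_{B_{\rho/2}^+(y_0)}(|\nabla w|+s)^{\theta_1}\,dy\Bigr)^{1/\theta_1}\leq C\Bigl(\fint_{B_{\rho}^+(y_0)}(|\nabla w|+s)^{p}\,dy\Bigr)^{1/p}.
\end{equation*}
This is the statement for $t=p$. To get the general exponent $t>0$, I use the standard interpolation trick: apply the above inequality on balls of radii $r_k$ running between $\rho/2$ and $\rho$, estimate the $L^p$-average by $L^\infty$-average times $L^t$-average with H\"older, and absorb the $L^\infty$-average via a hole-filling iteration on the radii. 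This gives the estimate with any $t>0$ on the right-hand side, with $C$ additionally depending on $t$.

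The main technical obstacle is ensuring the Caccioppoli and Sobolev--Poincar\'e inequalities work uniformly for both interior half-balls and half-balls whose center lies on $\{y_1=0\}$, so that Gehring's lemma applies across the transition. Once one handles those two cases via the vanishing trace of $w$ on the flat part, the rest is routine and yields the claim exactly as stated.
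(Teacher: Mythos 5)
Your proof follows essentially the same route as the paper's: a boundary Caccioppoli estimate obtained by testing against $\eta^p w$, the Sobolev--Poincar\'e inequality (valid because $w$ has zero trace on the flat part), the parallel interior estimate, and then a covering argument plus Gehring's lemma to get the higher integrability exponent $\theta_1>p$. The only inaccuracy is in your sketch of the passage to general $t>0$: you propose to estimate the $L^p$-average by an $L^\infty$-average times an $L^t$-average, but $\nabla w$ is not a priori bounded at this stage, so that factorization is not available. The standard self-improvement (which is what the paper implicitly invokes via the ``standard covering argument and Gehring's lemma'') instead interpolates between the $L^{\theta_1}$-average and the $L^t$-average (using that $p$ lies strictly between $t$ and $\theta_1$), then absorbs the $L^{\theta_1}$-term by Young's inequality combined with a hole-filling iteration over radii. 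With that correction, the argument matches the paper's.
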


\begin{proof}
For simplicity, we still denote $\nabla=\nabla_y$ through this proof.
First we prove a Caccioppoli type inequality in half balls. Suppose that $y_0\in B_{2r}\cap \partial\R^n_+$ and $B_{2\rho}(y_0)\subset\subset B_{2r}$. Let $\zeta$ be a nonnegative smooth function satisfying $\zeta=1$ in $B_\rho(y_0) $, $|\nabla \zeta|\leq 2\rho^{-1}$, and $\zeta=0$ outside $B_{2\rho}(y_0)$.
Using $\zeta^p w$ as a test function in (\ref{eq:w1}), we get the following
\begin{equation}\label{test}
0=\int_{B^+_{2r}} \langle A_1(y, \nabla w),\zeta^p \nabla w\rangle \,dy+p \int_{B^+_{2r}} \langle A_1(y, \nabla w), \zeta^{p-1} w \nabla \zeta \rangle\,dy:=\RN{1}+\RN{2}.
\end{equation}
Using the fundamental theorem of calculus, (\ref{growth1}), (\ref{elliptic1}), and Young's inequality with exponents $p$ and $p/(p-1)$, we have
\begin{align*}
   \RN{1}&=\int_{B^+_{2r}} \zeta^p \Big(\int_0^1 \langle D_\xi A_1(y,t\nabla w)\nabla w, \nabla w\rangle\; dt+A_1(y,0)\nabla w\Big)dy\\
    &\geq \int_{B^+_{2r}} \zeta^p \Big(\int_0^1 \lambda_1^{-1}(s^2+|\nabla w|^2t^2)^\frac{p-2}{2} |\nabla w|^2\,dt-\lambda_1 s^{p-1}|\nabla w|\Big)dy\\
    &\geq c(\lambda_1, p) \int_{B^+_{2r}} \zeta^p |\nabla w|^p \,dy-C'(\lambda_1,p) \int_{B^+_{2r}} \zeta^p s^p \,dy .
\end{align*}
On the other hand, using \eqref{growth1} and Young's inequality, we have
\begin{align*}
     &|\RN{2}|\leq p\lambda_1 \int_{B^+_{2\rho}(y_0)} (s^2+|\nabla w|^2)^\frac{p-1}{2} \zeta^{p-1} |w \nabla \zeta| \,dy\\
     &\leq \frac{1}{2} c(\lambda_1, p)\int_{B^+_{2r}}\zeta^{p} (s^2+|\nabla w|^2)^\frac{p}{2}\,dy+ C''(\lambda_1, p)\int_{B^+_{2\rho}(y_0)} |w \nabla \zeta|^p \,dy\\
     &\leq \frac{1}{2} c(\lambda_1, p)\int_{B^+_{2r}}\zeta^{p} (s^p+|\nabla w|^p)\,dy+ C''(\lambda_1, p)\int_{B^+_{2\rho}(y_0)} |w \nabla \zeta|^p \,dy.
\end{align*}
Therefore, (\ref{test}) implies the following Caccioppoli type inequality
\begin{equation}\label{caccio}
    \int_{B^+_\rho(y_0)}|\nabla w|^p\,dx\leq C \rho^n s^p+ C\rho^{-p}\int_{B^+_{2\rho}(y_0)}|w|^p\,dx.
\end{equation}
Since $w=u=0$ on $B_{2r}\cap \partial \R^n_+$, by the Sobolev-Poincar\'e inequality,
\begin{equation}\label{sob}
    \left(\int_{B^+_{2\rho}(y_0)}|w|^p\,dx\right)^{1/p}
    \leq C \rho^{1+\frac{n}{p}-\frac{n}{q}}   \left(\int_{B^+_{2\rho}(y_0)}|\nabla w|^q\,dx\right)^{1/q}
\end{equation}
for any $q$ such that $\max\big\{1,\frac{np}{n+p}\big\}\le q<p$.
Thus by combining (\ref{caccio}) and (\ref{sob}), we have
\begin{align*}
     \left (\fint_{B^+_{\rho}(y_0)}(|\nabla w|+s)^{p}\,dx\right)^{1/p}\leq C\left(\fint_{B^+_{2\rho}(y_0)}(|\nabla w|+s)^q \,dx\right)^{1/q}.
\end{align*}
Similarly, the following interior version of estimate
\begin{align*}
     \left (\fint_{B_{\rho}(y_0)}(|\nabla w|+s)^{p}\,dx\right)^{1/p}\leq C\left(\fint_{B_{2\rho}(y_0)}(|\nabla w|+s)^q \,dx\right)^{1/q}
\end{align*}
holds for all $B_{2\rho}(y_0)\subset\subset B^+_{2r}$.
Therefore, by a standard covering argument and Gehring's lemma, we get (\ref{reverse}).
\end{proof}

We also have a boundary comparison result analogous to Lemma \ref{lem:u-w} by following almost the same proof.
\begin{lemma}\label{lem:u-w2}
Let w be a solution to \eqref{eq:w1}  and assume that $p\in (1,2)$. Then for any $\gamma_0\in(0,2-p]$ when $p\in \big(\frac{3n-2}{2n-1},2\big)$ and $\gamma_0\in \big(0,\frac{(p-1)n}{n-1}\big)$ when $p\in \big(1, \frac{3n-2}{2n-1}\big]$, it holds that
\begin{align*}
         &\left(\fint_{B^+_{2r}}|\nabla_y u_1-\nabla_y w|^{\gamma_0}\,dx \right)^{1/\gamma_0}\\
         &\leq C\left[\frac{|\mu_1|(B^+_{2r})}{r^{n-1}} \right]^\frac{1}{p-1}+C\frac{|\mu_1|(B^+_{2r})}{r^{n-1}}\fint_{B^+_{2r}}(|\nabla_y u_1|+s)^{2-p}\,dx,
\end{align*}
where $C$ is a constant depending only on $n$, $p$, $\lambda$, and $\gamma_0$.
\end{lemma}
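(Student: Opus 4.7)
The lemma is the boundary analogue of Lemma \ref{lem:u-w}, and the plan is to run exactly the same argument with half balls in place of balls. The key structural observation that makes this possible is that $u_1 - w$ lies in $W^{1,p}_0(B^+_{2r})$: since $u = 0$ on $\partial\Omega$ we have $u_1 = 0$ on $B_{R_1/2}\cap\partial\R^n_+$, while the Dirichlet condition in \eqref{eq:w1} forces $w = u_1$ on all of $\partial B^+_{2r}$. Consequently $u_1 - w$ and its truncations $T_k(u_1-w)$ extend by zero to functions in $W^{1,p}_0(B_{2r})$, so the standard Sobolev--Poincar\'e inequality on $B_{2r}$ can be applied throughout.

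For $p \in \big(1, \frac{3n-2}{2n-1}\big]$, the argument of \cite[Lemma 2.1]{nguyen2020existence} carries over verbatim to the half-ball setting through this zero extension, handling both $s=0$ and $s>0$.

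For the main case $p \in \big(\frac{3n-2}{2n-1}, 2\big)$, I would mimic the five-step scheme used for Lemma \ref{lem:u-w}. First I would normalize by scaling so that $B^+_{2r} = B^+_2$ and $|\mu_1|(B^+_2) = 1$. Then I would test \eqref{eq:u1} and \eqref{eq:w1} against $\varphi_1 = T_{2k}(u_1 - w)$ and use the ellipticity \eqref{elliptic1} to derive
\[
\int_{B^+_2 \cap \{|u_1 - w| < 2k\}} g^s(u_1, w)\,dy \leq C k,
\]
with $g^s$ defined exactly as in \eqref{test1}. Combining this with the pointwise bound
\[
|\nabla_y(u_1 - w)| \leq C g^s(u_1, w)^{1/p} + C g^s(u_1, w)^{1/2}\bigl(|\nabla_y u_1| + s\bigr)^{(2-p)/2},
\]
which follows from the triangle inequality and Young's inequality with exponents $2/p$ and $2/(2-p)$, and applying the Sobolev inequality to the zero extension of $T_{2k}(u_1 - w) - T_k(u_1 - w)$ yields the weak-type estimate
\[
\|u_1 - w\|_{L^{n/(2(n-1)), \infty}(B^+_2)} \leq C + C Q_1^{(2-p)/2},
\]
where $Q_1 = \| |\nabla_y u_1| + s\|_{L^{2-p}(B^+_2)}$. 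Absorption uses the strict inequality $\frac{n}{2(n-1)} > \frac{2-p}{2(p-1)}$, which holds precisely when $p > \frac{3n-2}{2n-1}$. A Chebyshev argument then promotes this into a weak-type bound on $g^s(u_1, w)$ in $L^{q/(1+q), \infty}(B^+_2)$ with $q = \frac{n}{2(n-1)}$, and a final H\"older's inequality with exponents $2/p$ and $2/(2-p)$ produces the desired $L^{\gamma_0}$ estimate of $\nabla_y(u_1 - w)$ for every $\gamma_0 \in (0, 2-p]$.

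The main point to check, and arguably the only genuine obstacle, is that no new dependence on the boundary geometry creeps into the constants. Since $A_1$ satisfies \eqref{growth1}--\eqref{osc:bdry1} with constants depending only on $n$, $p$, and $\lambda$, and the modulus $\omega_1 = \omega + \omega_0$ does not enter into any of the testing, Sobolev, or Chebyshev steps, the final bound depends only on $n$, $p$, $\lambda$, and $\gamma_0$, as stated. The bound $|\nabla_{x'}\chi| \leq 1/2$ recorded in \eqref{chi} is what ensures the half ball $B^+_{2r}$ is a genuine Lipschitz domain with uniform constants, making the zero-extension and Sobolev steps behave exactly as in the interior case.
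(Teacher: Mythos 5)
Your proposal is correct and takes the same route the paper outlines (the paper gives Lemma \ref{lem:u-w2} only a one-line proof, asserting that it follows from Lemma \ref{lem:u-w} by almost the same argument, which you have fleshed out accurately): the essential observation that $u_1-w\in W^{1,p}_0(B^+_{2r})$---using $u_1=0$ on the flat part and $w=u_1$ on $\partial B^+_{2r}$---lets all the testing, Sobolev, Chebyshev, and absorption steps of Lemma \ref{lem:u-w} run unchanged after zero extension to $B_{2r}$. One small inaccuracy in your closing remark: the bound $|\nabla_{x'}\chi|\le 1/2$ from \eqref{chi} has no bearing on the geometry of the half ball $B^+_{2r}$ (which is a fixed-shape domain once you have flattened the boundary); its actual role is to keep the constant $\lambda_1$ in the growth and ellipticity conditions \eqref{growth1}--\eqref{elliptic1} for the transformed vector field $A_1$ depending only on $n$, $p$, $\lambda$, which is what prevents any boundary-geometry dependence from entering the comparison estimate.
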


We now let $v\in w+W_0^{1,p}(B^+_r)$ be the unique solution to

\begin{equation}\label{eq:v2}
    \left\{
\begin{aligned}
     -\div_y(A_1(0,\nabla_y v)) =&  0 \quad \text{in} \,\, B^+_{r},  &\\
     v =&  w \quad \text{on}\,\, \partial B^+_{r}. &\\
\end{aligned}
\right.
\end{equation}
We also have an estimate for the difference $\nabla v-\nabla w$ analogous to \eqref{comp} by following almost the same proof as that of  \cite[Eq. (4.35)]{duzaar2010gradient}:
\begin{align*}
    \fint_{B^+_r}|\nabla_y v-\nabla_y w|^p\,dx\leq C\omega_0(r)^p\fint_{B^+_{r}}(|\nabla_y w|+s)^p\,dx.
\end{align*}
Thus by \eqref{reverse} and H\"older's inequality, we get
\begin{equation*}
        \fint_{B^+_r}|\nabla_y v-\nabla_y w|^{\gamma_0}\,dx\leq C\omega_0(r)^{\gamma_0}\fint_{B^+_{2r}}(|\nabla_y w|+s)^{\gamma_0}\,dx.
\end{equation*}
Next we prove an oscillation estimate for $v$.
\begin{lemma}
                \label{lem5.3}
Let $v$ be a solution to \eqref{eq:v2}. There exists constant $C>1$  depending only on $n$, $p$, $\lambda$, and $\gamma_0$, such that for any half ball $B_\rho^+\subset B_R^+\subset B_r^+$, we have
\begin{equation}\label{ineq:osc2}
\begin{aligned}
&\inf_{\theta\in \mathbb{R}}\left(\fint_{B^+_{ \rho}}|D_{y_1} v-\theta|^{\gamma_0}+|D_{y'}v|^{\gamma_0}\right)^{1/\gamma_0}\\
&\leq C \left(\frac{\rho}{R}\right)^\alpha \inf_{\theta\in \mathbb{R}}\left(\fint_{B^+_{R}}|D_{y_1} v-\theta|^{\gamma_0}+|D_{y'}v|^{\gamma_0}\right)^{1/\gamma_0},
\end{aligned}
\end{equation}
where $\alpha\in(0,1)$ is the same constant as in Theorem \ref{thm:osc}.
\end{lemma}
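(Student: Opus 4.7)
The plan is to reduce Lemma \ref{lem5.3} to the interior oscillation estimate of Theorem \ref{thm:osc} via an odd reflection across the flat portion of the boundary. First I would observe that since $\nabla_{x'}\chi(0')=0$, one has $D\Lambda(0)=I$, so
\begin{equation*}
A_1(0,\xi)=a_1(0)(|\xi|^2+s^2)^{(p-2)/2}\xi,
\end{equation*}
which is an $x$-independent $p$-Laplace-type nonlinearity satisfying \eqref{ineq:growth} and \eqref{ineq:elliptic}; crucially, it is \emph{odd} in $\xi$. Since $w\in u_1+W^{1,p}_0(B^+_{2r})$ and $u_1$ has zero trace on $B_{{R_1}/{2}}\cap\partial\mathbb{R}^n_+$, and since $v=w$ on $\partial B_r^+$, the function $v$ itself has zero trace on the flat portion $B_r\cap\partial\mathbb{R}^n_+$ of $\partial B_r^+$.

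Next I would define the odd extension $\tilde v$ on $B_r$ by $\tilde v(y_1,y')=\operatorname{sign}(y_1)\,v(|y_1|,y')$, which lies in $W^{1,p}(B_r)$ by the zero trace of $v$. The main verification is that $\tilde v$ is a weak solution to $-\operatorname{div}(A_1(0,\nabla\tilde v))=0$ in all of $B_r$. Because $A_1(0,\cdot)$ is odd in $\xi$, $D_{y_1}\tilde v$ is even and $D_{y'}\tilde v$ is odd in $y_1$, so the first component of $A_1(0,\nabla\tilde v)$ is even and the remaining components are odd in $y_1$. For any $\varphi\in C^\infty_0(B_r)$, a change of variables $y_1\mapsto -y_1$ on $B_r^-$ yields
\begin{equation*}
\int_{B_r}A_1(0,\nabla\tilde v)\cdot\nabla\varphi\,dy=\int_{B_r^+}A_1(0,\nabla v)\cdot\nabla(\varphi-\tilde\varphi)\,dy,
\end{equation*}
where $\tilde\varphi(y_1,y'):=\varphi(-y_1,y')$. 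The restriction of $\varphi-\tilde\varphi$ to $B_r^+$ vanishes on $\{y_1=0\}$ (by antisymmetry) and on the curved boundary (by the compact support of $\varphi$), hence lies in $W^{1,p}_0(B_r^+)$ and is admissible in the weak formulation for $v$, yielding the identity.

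With $\tilde v$ solving an equation of type \eqref{eq1.01} in $B_r$, Theorem \ref{thm:osc} gives
\begin{equation*}
\inf_{\mathbf{q}\in\mathbb{R}^n}\Bigl(\fint_{B_\rho}|\nabla\tilde v-\mathbf{q}|^{\gamma_0}\Bigr)^{1/\gamma_0}\le C\Bigl(\frac{\rho}{R}\Bigr)^{\alpha}\inf_{\mathbf{q}\in\mathbb{R}^n}\Bigl(\fint_{B_R}|\nabla\tilde v-\mathbf{q}|^{\gamma_0}\Bigr)^{1/\gamma_0},
\end{equation*}
and I would convert each side back to half-ball quantities using the symmetries of $\tilde v$. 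For the right-hand side, restricting to $\mathbf{q}=(\theta,0,\dots,0)$ and using $\int_{B_R}|D_{y_1}\tilde v-\theta|^{\gamma_0}=2\int_{B_R^+}|D_{y_1}v-\theta|^{\gamma_0}$ together with $\int_{B_R}|D_{y'}\tilde v|^{\gamma_0}=2\int_{B_R^+}|D_{y'}v|^{\gamma_0}$ bounds the interior infimum at scale $R$ by the RHS of \eqref{ineq:osc2}. For the left-hand side, let $\mathbf{q}^*=(q_1^*,q'^*)$ be a near-minimizer at scale $\rho$. The reflection identity $\int_{B_\rho^-}|D_{y'}\tilde v-q'^*|^{\gamma_0}=\int_{B_\rho^+}|D_{y'}v+q'^*|^{\gamma_0}$ combined with the sub-additivity inequality
\begin{equation*}
|D_{y'}v|^{\gamma_0}=\Bigl|\tfrac{1}{2}(D_{y'}v-q'^*)+\tfrac{1}{2}(D_{y'}v+q'^*)\Bigr|^{\gamma_0}\le 2^{-\gamma_0}\bigl(|D_{y'}v-q'^*|^{\gamma_0}+|D_{y'}v+q'^*|^{\gamma_0}\bigr)
\end{equation*}
yields $\int_{B_\rho^+}|D_{y'}v|^{\gamma_0}\le 2^{-\gamma_0}\int_{B_\rho}|\nabla\tilde v-\mathbf{q}^*|^{\gamma_0}$, and together with the obvious bound on $\int_{B_\rho^+}|D_{y_1}v-q_1^*|^{\gamma_0}$ this controls the LHS of \eqref{ineq:osc2} by the full-ball infimum at scale $\rho$, completing the argument.

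The main obstacle will be the conversion step between half-ball and full-ball infima: because $\gamma_0\in(0,1)$ we lack convexity, so the infimizing vector need not take the form $(\theta,0)$, and the sub-additivity trick above is precisely what circumvents this loss of convexity. A secondary technicality is that the odd-extension argument works only because the nonlinearity is ``diagonalized''—i.e.\ $A_1(0,\xi)$ depends on $\xi$ only through $|\xi|$ and $\xi$ itself—which is exactly the reason Section \ref{sec5} is restricted to $p$-Laplace type equations rather than general $A(x,\xi)$.
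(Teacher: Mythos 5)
Your proposal is correct and takes essentially the same route as the paper: odd reflection of $v$ across the flat boundary (valid because $v$ has zero trace there and $A_1(0,\cdot)$ is the diagonalized $p$-Laplacian), application of Theorem \ref{thm:osc} to the extension, and conversion between half-ball and full-ball $L^{\gamma_0}$-oscillations. You spell out more explicitly than the paper (which just cites ``the triangle inequality'') the sub-additivity trick for $\gamma_0<1$; one small wording caveat is that the parity of the flux components does not follow from $A_1(0,\cdot)$ being odd in $\xi$ alone, but rather from its equivariance under the coordinate reflection $\xi\mapsto(\xi_1,-\xi')$, which your closing remark about the ``diagonalized'' form correctly identifies as the operative structure.
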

\begin{proof}
Let $\Bar{v}$ be an odd extension of $v$ in $B_{r}$,
namely,
\begin{equation*}
   \Bar{v}(y)= \left\{
\begin{aligned}
    &\qquad\quad  v(y)\quad\ \   \text{if} \quad y_1\geq 0, \\
    &-v(-y_1,y') \quad \text{if} \quad y_1<0.
\end{aligned}
\right.
\end{equation*}
Then since $$ A_1(0,\xi)=a_1(0)(|\xi|^{2}+s^2)^\frac{p-2}{2}\xi,$$
$\Bar{v}\in W^{1,p}(B_r)$ is a solution to the equation
$$
-\div_y\left(a_1(0)(|\nabla_y \Bar{v}|^2+s^2)^\frac{p-2}{2}\nabla_y \Bar{v}\right)=0 \quad \text{in} \,\, B_r.
$$
Thus we can apply Theorem \ref{thm:osc} to $\Bar{v}$ to get
\begin{align*}
     \inf_{\mathbf{q}\in \mathbb{R}^n}\left(\fint_{B_{\rho}}|\nabla_y \Bar{v}-\mathbf{q}|^{\gamma_0}\right)^{1/\gamma_0}\leq& C\left(\frac{\rho}{R}\right)^{\alpha}\inf_{\mathbf{q}\in \mathbb{R}^n}\left(\fint_{B_{R}}|\nabla_y \Bar{v}-\mathbf{q}|^{\gamma_0}\right)^{1/\gamma_0}.
\end{align*}
Since $\Bar{v}$ is an odd function in $y_1$, by the triangle inequality, there exists $\theta_\rho \in \R$ such that
\begin{align*}
    \left(\fint_{B^+_{ \rho}}|D_{y_1} v-\theta_\rho|^{\gamma_0}+|D_{y'}v|^{\gamma_0}\right)^{1/\gamma_0}\leq C  \inf_{\mathbf{q}\in \mathbb{R}^n}\left(\fint_{B_{\rho}}|\nabla_y \Bar{v}-\mathbf{q}|^{\gamma_0}\right)^{1/\gamma_0}.
\end{align*}
By the triangle inequality again, it is easily seen that
\begin{align*}
    \inf_{\mathbf{q}\in \mathbb{R}^n}\left(\fint_{B_{R}}|\nabla_y \Bar{v}-\mathbf{q}|^{\gamma_0}\right)^{1/\gamma_0}\leq C
    \inf_{\theta\in \mathbb{R}}\left(\fint_{B^+_{R}}|D_{y_1} v-\theta|^{\gamma_0}+|D_{y'}v|^{\gamma_0}\right)^{1/\gamma_0}.
\end{align*}
Then (\ref{ineq:osc2}) is a direct consequence of the three inequalities above.
\end{proof}

\begin{lemma}
Suppose that $u_1\in W^{1,p}(B^+_{R_1})$ is a solution to  (\ref{eq:u1}). Then for any $\varepsilon\in(0,1)$ and $r\in (0,R_1/4]$, we have
\begin{equation}\label{ineq:pr}
\begin{aligned}
&\inf_{\theta\in \mathbb{R}}\left(\fint_{B^+_{ \varepsilon r}}|D_{y_1} u_1-\theta|^{\gamma_0}+|D_{y'}u_1|^{\gamma_0}\right)^{1/\gamma_0}\\
&\leq C\varepsilon^{\alpha}\inf_{\theta\in \mathbb{R}}\left(\fint_{B^+_{ r}}|D_{y_1} u_1-\theta|^{\gamma_0}+|D_{y'}u_1|^{\gamma_0}\right)^{1/\gamma_0}\\
&\quad +C_\varepsilon\left(\frac{|\mu_1|(B^+_{2r})}{r^{n-1}}\right)^\frac{1}{p-1}+C_\varepsilon\omega_1(r)\left(\fint_{B^+_{2r}}(|\nabla_y u_1|+s)^{2-p}\right)^\frac{1}{2-p}\\
&\quad +C_\varepsilon \frac{|\mu_1|(B^+_{2r})}{r^{n-1}}\fint_{B^+_{2r}}(|\nabla_y u_1|+s)^{2-p},
\end{aligned}
\end{equation}
where  $\alpha$ and $\gamma_0$ are the same constants as in Proposition \ref{prop1}, $C_\varepsilon$ is a constant depending on $\varepsilon$, $n$, $p$, $\lambda$, and $\gamma_0$, and $C$ is a constant depending on $n$, $p$, $\lambda$, and $\gamma_0$.
\end{lemma}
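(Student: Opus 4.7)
The statement is the boundary analog of Proposition \ref{prop1}, and I would prove it by mimicking the three-step comparison scheme used there, with the interior oscillation estimate (Theorem \ref{thm:osc}) replaced by the half-ball oscillation estimate (Lemma \ref{lem5.3}), and with the interior comparison lemma replaced by its boundary counterpart (Lemma \ref{lem:u-w2}).

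First, I would introduce $w\in u_1+W_0^{1,p}(B_{2r}^+)$ solving \eqref{eq:w1} and $v\in w+W_0^{1,p}(B_r^+)$ solving \eqref{eq:v2}. For any $\theta\in\mathbb{R}$, writing the integrand schematically as $F(\xi)=|\xi_1-\theta|^{\gamma_0}+|\xi'|^{\gamma_0}$ (which is subadditive up to constants after taking the $\gamma_0$-th root), the triangle inequality gives
\begin{align*}
&\inf_{\theta}\Big(\fint_{B^+_{\varepsilon r}}F(\nabla_y u_1)\Big)^{1/\gamma_0}\\
&\le C\inf_{\theta}\Big(\fint_{B^+_{\varepsilon r}}F(\nabla_y v)\Big)^{1/\gamma_0}+C\Big(\fint_{B^+_{\varepsilon r}}|\nabla_y u_1-\nabla_y v|^{\gamma_0}\Big)^{1/\gamma_0}.
\end{align*}
Lemma \ref{lem5.3} applied to $v$ on $B^+_{\varepsilon r}\subset B^+_r$ controls the first term by $C\varepsilon^{\alpha}\inf_{\theta}\big(\fint_{B^+_{r}}F(\nabla_y v)\big)^{1/\gamma_0}$, which in turn is bounded (again by the triangle inequality) by $C\varepsilon^{\alpha}\inf_{\theta}\big(\fint_{B^+_{r}}F(\nabla_y u_1)\big)^{1/\gamma_0}$ plus an extra term $C\varepsilon^{\alpha}\big(\fint_{B_r^+}|\nabla_y u_1-\nabla_y v|^{\gamma_0}\big)^{1/\gamma_0}$.

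Next, I would use the trivial estimate $\fint_{B_{\varepsilon r}^+}\le C\varepsilon^{-n}\fint_{B_{r}^+}$ to absorb both error terms into a single quantity of the form $C_\varepsilon\big(\fint_{B_r^+}|\nabla_y u_1-\nabla_y v|^{\gamma_0}\big)^{1/\gamma_0}$. Then I would split $|\nabla_y u_1-\nabla_y v|\le |\nabla_y u_1-\nabla_y w|+|\nabla_y w-\nabla_y v|$. The first piece is dominated via Lemma \ref{lem:u-w2}, giving
$$\Big(\fint_{B_{2r}^+}|\nabla_y u_1-\nabla_y w|^{\gamma_0}\Big)^{1/\gamma_0}\le C\Big(\tfrac{|\mu_1|(B_{2r}^+)}{r^{n-1}}\Big)^{1/(p-1)}+C\tfrac{|\mu_1|(B_{2r}^+)}{r^{n-1}}\fint_{B_{2r}^+}(|\nabla_y u_1|+s)^{2-p},$$
which accounts for the last two potential terms on the right-hand side of \eqref{ineq:pr}. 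For the second piece, the comparison estimate displayed just before Lemma \ref{lem5.3}, combined with the boundary reverse H\"older inequality (Lemma \ref{lem5.1}) and H\"older's inequality with exponents $p/(2-p)$ and $p/(p-(2-p))$ (or simply passing to $u_1$ via another application of Lemma \ref{lem:u-w2}), yields
$$\Big(\fint_{B_r^+}|\nabla_y w-\nabla_y v|^{\gamma_0}\Big)^{1/\gamma_0}\le C\omega_0(r)\Big(\fint_{B_{2r}^+}(|\nabla_y u_1|+s)^{2-p}\Big)^{1/(2-p)},$$
and since $\omega_0\le \omega_1=\omega+\omega_0$, this produces the $\omega_1(r)$-term. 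Collecting these bounds gives \eqref{ineq:pr}.

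The only mild technical point is the replacement of $\fint_{B_{2r}^+}(|\nabla_y w|+s)^{\gamma_0}$ by $\fint_{B_{2r}^+}(|\nabla_y u_1|+s)^{2-p}$ raised to an appropriate power: one uses Lemma \ref{lem5.1} to upgrade the $L^{\gamma_0}$-average of $|\nabla_y w|$ to any $L^t$-average (in particular $L^{2-p}$ when $\gamma_0<2-p$, or equality when $\gamma_0=2-p$), and then the triangle inequality $|\nabla_y w|\le |\nabla_y u_1|+|\nabla_y u_1-\nabla_y w|$ together with Lemma \ref{lem:u-w2} absorbs the comparison error into the already-present data terms. This is the only place where some care with exponents is needed; otherwise the proof is a direct transcription of the interior argument onto half balls.
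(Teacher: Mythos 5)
Your proof is correct and follows essentially the same route the paper intends: the paper omits the proof with the remark that it is ``almost identical to that of Proposition~\ref{prop1}'' modulo replacing Theorem~\ref{thm:osc}, Lemma~\ref{lem:u-w}, and the interior $v$--$w$ comparison with their half-ball counterparts (Lemma~\ref{lem5.3}, Lemma~\ref{lem:u-w2}, Lemma~\ref{lem5.1} plus the displayed $v$--$w$ estimate), and that is exactly what you do. Two small points worth tidying: (i) the displayed $v$--$w$ comparison preceding Lemma~\ref{lem5.3} should really carry $\omega_1=\omega+\omega_0$ rather than $\omega_0$, since the modulus in \eqref{osc:bdry1} for $A_1$ is $\omega_1$ (your remark ``since $\omega_0\le\omega_1$'' masks this typographical slip rather than addressing it); (ii) the final passage from the $L^{\gamma_0}$-average to the $L^{2-p}$-average on the right-hand side is just Jensen's inequality using $\gamma_0\le 2-p$ -- you do not need the reverse H\"older inequality of Lemma~\ref{lem5.1} for that particular step, only the triangle inequality $|\nabla_y w|\le|\nabla_y u_1|+|\nabla_y u_1-\nabla_y w|$ together with Lemma~\ref{lem:u-w2}, as in the passage from \eqref{eq:v-w} to \eqref{ineq:u-v} in the interior case.
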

\begin{proof}
By using Lemmas \ref{lem5.1}, \ref{lem:u-w2}, and \ref{lem5.3},
the proof is almost identical to that of Proposition \ref{prop1}, so we omit it.
\end{proof}
We now define
$$\psi(x,r)=\inf_{\theta\in \mathbb{R}}\left(\fint_{\Omega_{r}(x)}|D_1 u-\theta|^{\gamma_0}+|D_{x'}u|^{\gamma_0}\right)^{1/\gamma_0}.$$
Let $\varepsilon\in(0,1)$ and  $r\in (0, R_1/4]$. By using change of variables, \eqref{bdry}, \eqref{volume}, and the triangle inequality,  we have
\begin{equation}\label{eq:geq}
\begin{aligned}
    &\inf_{\theta\in \mathbb{R}}\left(\fint_{B^+_{ \varepsilon r}}|D_{y_1} u_1-\theta|^{\gamma_0}+|D_{y'}u_1|^{\gamma_0}\right)^{1/\gamma_0}\\
    &= \inf_{\theta\in \mathbb{R}}\left(\fint_{\Gamma(B^+_{ \varepsilon r})}|D_{1} u-\theta|^{\gamma_0}+|D_{1}u\; D_{x'}\chi+D_{x'} u|^{\gamma_0}\right)^{1/\gamma_0}\\
    &\geq C \inf_{\theta\in \mathbb{R}}\left(\fint_{\Omega_{\epsilon r/2}}|D_{1} u-\theta|^{\gamma_0}+|D_{x'} u|^{\gamma_0}\right)^{1/\gamma_0}-C' \left(\fint_{\Omega_{\epsilon r/2}} |D_{1}u\; D_{x'}\chi|^{\gamma_0}\right)^{1/\gamma_0} \\
    &\geq C\psi(0,\epsilon r/2)-C'\omega_1(\epsilon r/2)\left(\fint_{\Omega_{\epsilon r/2}} |\nabla u|^{\gamma_0}\right)^{1/\gamma_0},
\end{aligned}
\end{equation}
where $C$ and $C'$ are positive constants depending only on $n$ and $\gamma_0$.
Similarly,
\begin{equation}\label{eq:leq}
\begin{aligned}
    &\inf_{\theta\in \mathbb{R}}\left(\fint_{B^+_{ r}}|D_{y_1} u_1-\theta|^{\gamma_0}+|D_{y'}u_1|^{\gamma_0}\right)^{1/\gamma_0}\\
     &= \inf_{\theta\in \mathbb{R}}\left(\fint_{\Gamma(B^+_{ r})}|D_{1} u-\theta|^{\gamma_0}+|D_{1}u\; D_{x'}\chi+D_{x'} u|^{\gamma_0}\right)^{1/\gamma_0}\\
    &\leq C'' \inf_{\theta\in \mathbb{R}}\left(\fint_{\Omega_{ 2r}}|D_{1} u-\theta|^{\gamma_0}+|D_{x'} u|^{\gamma_0}\right)^{1/\gamma_0}+C'' \left(\fint_{\Omega_{2r}} |D_{1}u\; D_{x'}\chi|^{\gamma_0}\right)^{1/\gamma_0} \\
    &\leq C''\psi(0,2r)+C''\omega_1(2r)\left(\fint_{\Omega_{2r}} |\nabla u|^{\gamma_0}\right)^{1/\gamma_0},
\end{aligned}
\end{equation}
where $C''$ is a positive constant depending only on $n$ and $\gamma_0$. Therefore, by using \eqref{eq:geq}, \eqref{eq:leq}, \eqref{chi}, and \eqref{volume}, \eqref{ineq:pr} implies that
\begin{align*}
        &\psi(0,\epsilon r/2)\leq C\varepsilon^{\alpha}\psi(0,2r)+C_\varepsilon\left(\frac{|\mu|(\Omega_{4r})}{r^{n-1}}\right)^\frac{1}{p-1}
        \\&\quad+C_\varepsilon\omega_1(2r)\left(\fint_{\Omega_{4r}}(|\nabla u|+s)^{2-p}\right)^\frac{1}{2-p}+C_\varepsilon\frac{|\mu|(\Omega_{4r})}{r^{n-1}}\fint_{\Omega_{4r}}(|\nabla u|+s)^{2-p}.
\end{align*}
By replacing $\varepsilon/4$ and $2r$ with $\varepsilon$ and $r$ respectively, we obtain
\begin{corollary}
Suppose that $u\in W^{1,p}_0(\Omega)$ is a solution to \eqref{eq:p} and $x_0\in\partial \Omega$. Then for $\varepsilon\in(0,1/4)$, $r\leq R_1/2$, and $\alpha$, $C$, $C_\epsilon$ as above, we have
\begin{equation}\label{ineq:cor}
\begin{aligned}
       &\psi(x_0,\epsilon r)\leq C\varepsilon^{\alpha}\psi(x_0, r)+C_\varepsilon\left(\frac{|\mu|(\Omega_{2r}(x_0))}{r^{n-1}}\right)^\frac{1}{p-1}\\
       &\,\,  +C_\varepsilon\omega_1(r)\left(\fint_{\Omega_{2r}(x_0)}(|\nabla u|+s)^{2-p}\right)^\frac{1}{2-p}+C_\varepsilon\frac{|\mu|(\Omega_{2r}(x_0))}{r^{n-1}}\fint_{\Omega_{2r}(x_0)}(|\nabla u|+s)^{2-p}.
\end{aligned}
\end{equation}
\end{corollary}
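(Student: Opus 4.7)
The plan is to compile the already-established half-ball oscillation estimate \eqref{ineq:pr} into an oscillation estimate on the original domain, using the boundary-flattening map $\Gamma$ together with the two-sided comparison estimates \eqref{eq:geq}--\eqref{eq:leq}, and then to conclude by a simple reparametrization of $\varepsilon$ and $r$. I fix $x_0=0\in\partial\Omega$ and work in the flattened coordinates $y=\Lambda(x)$, so that $u_1=u\circ\Gamma$ solves \eqref{eq:u1} in $B^+_{R_1/2}$. For $\varepsilon\in(0,1)$ and $r\leq R_1/4$, I will apply \eqref{ineq:pr}; then \eqref{eq:geq} at scale $\varepsilon r$ bounds the LHS of \eqref{ineq:pr} from below by $C\psi(0,\varepsilon r/2)$ minus a term $\omega_1(\varepsilon r/2)(\fint_{\Omega_{\varepsilon r/2}}|\nabla u|^{\gamma_0})^{1/\gamma_0}$, and \eqref{eq:leq} at scale $r$ bounds the flattened oscillation on $B^+_r$ from above by $C\psi(0,2r)$ plus a term $\omega_1(2r)(\fint_{\Omega_{2r}}|\nabla u|^{\gamma_0})^{1/\gamma_0}$.

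Next, since $\det D\Gamma=1$, the inclusion \eqref{bdry} gives $\Gamma(B^+_{2r})\subset \Omega_{4r}(0)$, while \eqref{volume} yields comparability of the volumes of $B^+_{2r}$ and $\Omega_{4r}(0)$. Therefore the measure term $|\mu_1|(B^+_{2r})/r^{n-1}$ and the half-ball average of $(|\nabla_y u_1|+s)^{2-p}$ on the RHS of \eqref{ineq:pr} convert into their analogues on $\Omega_{4r}(0)$; the Jacobian factor $D\Lambda$ appearing in $\nabla_y u_1=(D\Lambda)^T\nabla u$ only modifies multiplicative constants, thanks to \eqref{chi}. To absorb the two auxiliary $\omega_1$-error terms produced by \eqref{eq:geq}--\eqref{eq:leq}, I will invoke H\"older's inequality, which is admissible precisely because $\gamma_0\leq 2-p$ by our choice in Lemma \ref{lem:u-w2}:
$$\Big(\fint_{\Omega_\rho(0)}|\nabla u|^{\gamma_0}\Big)^{1/\gamma_0}\leq \Big(\fint_{\Omega_\rho(0)}(|\nabla u|+s)^{2-p}\Big)^{1/(2-p)}.$$
Combining this with the monotonicity and concavity of $\omega_1$ (so that both $\omega_1(\varepsilon r/2)$ and $\omega_1(2r)$ are comparable to $\omega_1(r)$ up to $\varepsilon$-dependent constants) collects the two extra contributions into a single term of the form $C_\varepsilon\omega_1(r)(\fint_{\Omega_{4r}(0)}(|\nabla u|+s)^{2-p})^{1/(2-p)}$.

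Finally, the substitution $(\varepsilon,r)\mapsto(\varepsilon/4, 2r)$ is permissible since the new parameters still satisfy $\varepsilon\in(0,1/4)$ and $r\leq R_1/2$; it converts the intermediate inequality (at scales $\varepsilon r/2$ and $2r$, with measure on $\Omega_{4r}$) into the desired inequality \eqref{ineq:cor} (at scales $\varepsilon r$ and $r$, with measure on $\Omega_{2r}$). The only real obstacle is the bookkeeping associated with absorbing the two extra $\omega_1$-errors produced by the change of variables; this consolidation is clean precisely because the admissible range $\gamma_0\leq 2-p$ from Lemma \ref{lem:u-w2} allows the $L^{\gamma_0}$ averages of $|\nabla u|$ to be dominated by the $L^{2-p}$ averages of $|\nabla u|+s$ that appear in the target estimate.
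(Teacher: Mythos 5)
Your proposal matches the paper's derivation step for step: apply \eqref{ineq:pr} on half balls, use the change-of-variable comparisons \eqref{eq:geq}--\eqref{eq:leq} to pass to $\psi$, convert the measure and gradient averages from $B^+_{2r}$ to sections of $\Omega$ using \eqref{bdry}, \eqref{volume}, and \eqref{chi}, absorb the $L^{\gamma_0}$-averages of $|\nabla u|$ into the $L^{2-p}$-averages of $|\nabla u|+s$ via Jensen's inequality (valid since $\gamma_0\leq 2-p$), and finish with the rescaling $(\varepsilon,r)\mapsto(\varepsilon/4,2r)$. One small slip: $\omega_1=\omega+\omega_0$ is not concave in general, since Definition \ref{def:bdry} only requires $\omega_0$ to be non-decreasing, so the comparability $\omega_1(2r)\leq C\omega_1(r)$ you invoke is unjustified; it is also unnecessary, because all three $\omega_1$-error factors ($\omega_1(\varepsilon r/2)$, $\omega_1(r)$, and $\omega_1(2r)$) are dominated by $\omega_1(2r)$ by monotonicity alone, and the final substitution $2r\mapsto r$ then produces exactly the factor $\omega_1(r)$ appearing in \eqref{ineq:cor}.
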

As in Section \ref{sec3}, we define
\begin{equation*}
    \phi(x,\rho)=\inf_{\mathbf{q}\in \R^n}\left(\fint_{\Omega_\rho(x)}|\nabla u-\mathbf{q}|^{\gamma_0}\right)^{1/\gamma_0}
\end{equation*}
and choose $\mathbf{q}_{x,r}\in \R^n$ such that
\begin{equation}\label{def:qxr}
    \left(\fint_{\Omega_r(x)}|\nabla u-\mathbf{q}_{x,r}|^{\gamma_0}\right)^{1/\gamma_0}=\inf_{\mathbf{q}\in \R^n}\left(\fint_{\Omega_r(x)}|\nabla u-\mathbf{q}|^{\gamma_0}\right)^{1/\gamma_0}.
\end{equation}
We remark that \eqref{limit} still holds for any Lebesgue point $x\in \Omega$ of the vector-valued function $\nabla u$ from the same argument as in Section \ref{sec3}. Moreover, if we assume $u\in C^1(\Bar{\Omega})$, then \eqref{limit} actually holds for any $x\in \Bar{\Omega}$.
\subsection{Global pointwise gradient estimates}

To prove the pointwise gradient estimate for  $p\in \big(\frac{3n-2}{2n-1},2\big)$, we choose
$\gamma_0=2-p$ and $\varepsilon=\varepsilon(n,p,\lambda,\alpha)\in (0,1/4)$ sufficiently small such that $C\epsilon^{\alpha}\leq 1/4$ for both constants $C$ in \eqref{ineq:prop} and \eqref{ineq:cor}. Fix $x_0\in \partial \Omega$ and $R\leq R_1/2$. For $j\ge 0$, set $r_j=\varepsilon^j R$, $\Omega_j=\Omega_{2r_j}(x_0)$,
$$
T_j=\left(\fint_{\Omega_j}(|\nabla u|+s)^{2-p}\,dx\right)^\frac{1}{2-p},\quad \phi_j=\phi(x_0,r_j),\quad\text{and}\,\,
\psi_j= \psi(x_0,r_j).
$$
Applying \eqref{ineq:cor} yields
\begin{equation*}
    \psi_{j+1}\leq\frac{1}{4}\psi_j+C\left(\frac{|\mu|(\Omega_j)}{r_j^{n-1}}\right)^\frac{1}{p-1}
    +C\frac{|\mu|(\Omega_j)}{r_j^{n-1}}T_j^{2-p}+C\omega_1(r_j) T_j.
\end{equation*}
Let $j_0$ and $m$ be positive integers such that $j_0\le m$. Summing the above inequality over $j\in\{j_0,j_0+1,\dots,m\}$ and noting that $\phi_j\leq \psi_j\leq C T_j$, we obtain that
\begin{equation}\label{sum2}
\begin{aligned}
    \sum_{j=j_0}^{m+1}\phi_j\leq\sum_{j=j_0}^{m+1}\psi_j&\leq CT_{j_0}+C\sum_{j=j_0}^{m}\left(\frac{|\mu|(\Omega_j)}{r_j^{n-1}}\right)^\frac{1}{p-1}\\
    &\quad +C\sum_{j=j_0}^m \frac{|\mu|(\Omega_j)}{r_j^{n-1}}T_j^{2-p}
    +C\sum_{j=j_0}^m\omega_1(r_j)T_j
\end{aligned}
\end{equation}
for any $x_0\in \partial \Omega$ and $R\leq R_1/2$.

On the other hand, according to \eqref{sum:int},
\begin{equation}\label{sum:int2}
\begin{aligned}
    \sum_{j=j_0}^{m+1}\phi_j&\leq C\phi_{j_0}+C\sum_{j=j_0}^{m}\left(\frac{|\mu|(\Omega_j)}{r_j^{n-1}}\right)^\frac{1}{p-1}\\
   &\quad  +C\sum_{j=j_0}^m \frac{|\mu|(\Omega_j)}{r_j^{n-1}}T_j^{2-p}+C\sum_{j=j_0}^m\omega(r_j)T_j
\end{aligned}
\end{equation}
holds for any $x_0\in \Omega$ and $R>0$ such that $r_{j_0}=\epsilon^{j_0}R <\text{dist}(x_0,\partial\Omega)/2$.

We now define
$$
\Omega_j^\prime=\Omega_{8r_j}(x_0),\quad
Z_j=\left(\fint_{\Omega_j^\prime}(|\nabla u|+s)^{2-p}\,dx\right)^\frac{1}{2-p}.
$$
Then we can obtain the following lemma.
\begin{lemma}
Suppose that $u\in W^{1,p}_0(\Omega)$ is a solution to \eqref{eq:p}, $x_0\in \Bar{\Omega}$, and $R\leq R_1/6$. Then we have
\begin{equation}\label{sum3}
\begin{aligned}
\sum_{j=j_0}^{m+1}\phi_j&\leq CZ_{j_0}+C\sum_{j=j_0}^{m}\left(\frac{|\mu|(\Omega_j^\prime)}{r_j^{n-1}}\right)^\frac{1}{p-1}\\
&\quad +C\sum_{j=j_0}^m \frac{|\mu|(\Omega_j^\prime)}{r_j^{n-1}} Z_j^{2-p}
+C\sum_{j=j_0}^{m+1}\omega_1(r_j)Z_j,
\end{aligned}
\end{equation}
where $C$ is a constant depending only on $n$, $p$, $\lambda$, and $\gamma_0$.
\end{lemma}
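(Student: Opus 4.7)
The plan is to derive \eqref{sum3} by combining the interior estimate \eqref{sum:int2} and the boundary estimate \eqref{sum2} through a dichotomy based on $d := \text{dist}(x_0, \partial\Omega)$. I pick a nearest boundary point $\hat x_0 \in \partial\Omega$ with $|\hat x_0 - x_0| = d$ (taking $\hat x_0 = x_0$ when $x_0 \in \partial\Omega$) and treat three cases.

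If $x_0 \in \partial\Omega$, then \eqref{sum2} applies at $x_0$ and \eqref{sum3} follows immediately: $\Omega_j \subset \Omega_j^\prime$ with comparable volumes by \eqref{volume}, so $T_j \le C Z_j$ and $|\mu|(\Omega_j) \le |\mu|(\Omega_j^\prime)$, while $\omega \le \omega_1$ by definition. If instead $x_0 \in \Omega$ and $r_{j_0} < d/2$, then $B_{2r_j}(x_0) \subset \Omega$ for every $j \ge j_0$ and \eqref{sum:int2} applies directly at $x_0$; bounding its leading term $C \phi_{j_0} \le C T_{j_0} \le C Z_{j_0}$ and then enlarging $\Omega_j$ to $\Omega_j^\prime$ and $\omega$ to $\omega_1$ yields \eqref{sum3}.

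The substantive case is $x_0 \in \Omega$ with $r_{j_0} \ge d/2 > 0$. Let $k \ge j_0$ be the smallest index with $r_k < d/2$ (set $k = m+2$ if no such index lies in $[j_0, m+1]$) and split the sum accordingly. For $j_0 \le j < k$, one has $d \le 2 r_j$, so $\Omega_{r_j}(x_0) \subset \Omega_{3 r_j}(\hat x_0)$, with the corresponding volumes comparable by \eqref{volume}; since $\phi \le C \psi$ pointwise (taking $\mathbf{q} = (\theta, 0, \ldots, 0)$ in the definition of $\phi$), volume comparison gives $\phi(x_0, r_j) \le C \psi(\hat x_0, 3 r_j)$. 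Applying \eqref{sum2} at $\hat x_0$ with starting radius $3R$, which is admissible because $3R \le R_1/2$, then controls $\sum_{j=j_0}^{k-1} \phi_j$; each ball $\Omega_{6 r_j}(\hat x_0)$ arising on the resulting right-hand side satisfies $\Omega_{6 r_j}(\hat x_0) \subset \Omega_{6 r_j + d}(x_0) \subset \Omega_{8 r_j}(x_0) = \Omega_j^\prime$ using $d \le 2 r_j$, and the leading $T$-term at $\hat x_0$ is dominated by $C Z_{j_0}$ by the same inclusion, so the result fits the template of \eqref{sum3}. For $j \ge k$ we have $r_j < d/2$, and \eqref{sum:int2} applies at $x_0$ with starting index $k$, producing $C \phi_k + \cdots \le C Z_k + \cdots$; since $r_k \le r_{j_0}$, $C Z_k$ is absorbed into $C Z_{j_0}$ via volume comparison. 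Adding the two sub-sums gives \eqref{sum3}.

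The main obstacle I anticipate is the bookkeeping in the mixed case: one must align the dyadic sequences $\{\varepsilon^j R\}$ centered at $x_0$ with $\{\varepsilon^j (3R)\}$ centered at $\hat x_0$ up to a bounded multiplicative factor absorbed into $C$, and verify that each term produced on the boundary side can be recast in the form $|\mu|(\Omega_j^\prime)/r_j^{n-1}$, $Z_j$, or $\omega_1(r_j)$ with constants depending only on $n$, $p$, $\lambda$, and $\gamma_0$. The factor $8$ in the definition of $\Omega_j^\prime$ is precisely the slack that makes the key inclusion $\Omega_{6 r_j}(\hat x_0) \subset \Omega_j^\prime$ hold throughout the regime $d \le 2 r_j$, and the threshold $R \le R_1/6$ ensures that the dilated radius $3R$ still lies in the admissible range for the boundary estimate.
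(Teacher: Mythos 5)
Your overall decomposition matches the paper's: split the index range at the first $k$ with $r_k < \operatorname{dist}(x_0,\partial\Omega)/2$ (the paper calls it $j_1+1$), control $\sum_{j_0}^{k-1}\phi_j$ via the boundary estimate \eqref{sum2} at a nearest boundary point using the inclusions $\Omega_{r_j}(x_0)\subset\Omega_{3r_j}(\hat x_0)$ and $\Omega_{6r_j}(\hat x_0)\subset\Omega_{8r_j}(x_0)$, and control the tail $\sum_{k}^{m+1}\phi_j$ via the interior estimate \eqref{sum:int2}. The endpoint cases ($x_0\in\partial\Omega$, $r_{j_0}<d/2$) are handled identically, and the observation that $3R\le R_1/2$ justifies running \eqref{sum2} with dilated radii is the same one the paper uses.

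The gap is in your treatment of the leading term of the interior tail. You write $\phi_k \le CZ_k$ and then claim $Z_k$ is ``absorbed into $CZ_{j_0}$ via volume comparison.'' That second inequality is false: $Z_j$ is the $L^{2-p}$-\emph{average} over $\Omega'_j=\Omega_{8r_j}(x_0)$, and $\Omega'_k\subset\Omega'_{j_0}$ is the wrong inclusion direction for comparing averages. The volume ratio $|\Omega'_{j_0}|/|\Omega'_k|\sim(r_{j_0}/r_k)^n=\varepsilon^{-n(k-j_0)}$ is unbounded as $k-j_0$ grows, and one can concentrate $|\nabla u|$ near $x_0$ to make $Z_k/Z_{j_0}$ arbitrarily large. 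The paper avoids this by instead bounding $\phi_k\le C\phi_{k-1}$ (a single dyadic step, hence a uniformly bounded volume ratio $\varepsilon^{-n}$ via \eqref{volume}), and then absorbing $\phi_{k-1}$ into the boundary sum $\sum_{j_0}^{k-1}\phi_j$, which is legitimate precisely because $r_{k-1}\ge d/2$ so the inclusion $\Omega_{r_{k-1}}(x_0)\subset\Omega_{3r_{k-1}}(\hat x_0)$ holds. Replacing your absorption step with this one-step comparison closes the gap and gives the lemma as stated.
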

\begin{proof}
First when $x_0\in \partial\Omega$, since $\Omega_j\subset \Omega_j^\prime$, we have $T_j\leq C Z_j$. Thus \eqref{sum2} directly implies \eqref{sum3}. It remains to prove the lemma for $x_0\in \Omega$. Since \eqref{sum:int2} holds when $r_{j_0}<\text{dist}(x_0,\partial\Omega)/2$, we only need to show that \eqref{sum3} holds when $r_{j_0}\geq \text{dist}(x_0,\partial\Omega)/2$.
Now assume $r_{j_1}\geq \text{dist}(x_0,\partial\Omega)/2$ and $r_{j_1+1}< \text{dist}(x_0,\partial\Omega)/2$.
By \eqref{sum:int2}, we have
\begin{equation}\label{sum:part1}
\begin{aligned}
    \sum_{j=j_1+1}^{m+1}\phi_j&\leq C\phi_{j_1+1}+C\sum_{j=j_1+1}^{m}\left(\frac{|\mu|(\Omega_j)}{r_j^{n-1}}\right)^\frac{1}{p-1}\\
   &\quad  +C\sum_{j=j_1+1}^m \frac{|\mu|(\Omega_j)}{r_j^{n-1}} T_j^{2-p}
   +C\sum_{j=j_1+1}^m\omega(r_j)T_j.
\end{aligned}
\end{equation}
By \eqref{volume}, we also have
\begin{equation}\label{j1+1}
    \begin{aligned}
         \phi_{j_1+1}\leq \left(\fint_{\Omega_{r_{j_1+1}}(x_0)}|\nabla u-\mathbf{q}_{x_0,r_{j_1}}|^{\gamma_0}\right)^{1/\gamma_0}\leq C \phi_{j_1}.
    \end{aligned}
\end{equation}
Now for any $j\in\{j_0,j_0+1,\dots,j_1\}$, $r_{j}\geq \text{dist}(x_0,\partial\Omega)/2$. Choose $y_0\in \partial \Omega$ such that $d:=\text{dist}(x_0,\partial\Omega)=|y_0-x_0|$, so that $\Omega_{r_j}(x_0)\subset \Omega_{3r_j}(y_0)$ and $\Omega_{6r_j}(y_0)\subset \Omega_{8r_j}(x_0)$.
Thus by using \eqref{sum2} at $y_0\in \partial \Omega$, we have
\begin{equation}\label{sum:part2}
\begin{aligned}
&\sum_{j=j_0}^{j_1}\phi_j\leq C \sum_{j=j_0}^{j_1}\phi(y_0,3r_j)\\
&\leq  CY_{j_0}+C\sum_{j=j_0}^{j_1}\left(\frac{|\mu|(\Omega_{6r_j}(y_0))}{r_j^{n-1}}\right)^\frac{1}{p-1}\\
&\quad +C\sum_{j=j_0}^{j_1} \frac{|\mu|(\Omega_{6r_j}(y_0))}{r_j^{n-1}} Y_j^{2-p}
+C\sum_{j=j_0}^{j_1+1}\omega_1(3r_j)Y_j\\
&\leq  CZ_{j_0}+C\sum_{j=j_0}^{j_1}\left(\frac{|\mu|(\Omega_j^\prime)}{r_j^{n-1}}\right)^\frac{1}{p-1}
+C\sum_{j=j_0}^{j_1}\frac{|\mu|(\Omega_j^\prime)}{r_j^{n-1}}Z_j^{2-p}
+C\sum_{j=j_0}^{j_1+1}\omega_1(r_j)Z_j,
\end{aligned}
\end{equation}
where
$$
Y_j:=\left(\fint_{\Omega_{6r_j}(y_0)}(|\nabla u|+s)^{2-p}\,dx\right)^\frac{1}{2-p}.
$$
Recall that $\omega_1=\omega+\omega_0$. Combining \eqref{sum:part1}, \eqref{j1+1}, and \eqref{sum:part2}, we obtain \eqref{sum3}. The lemma is proved.
\end{proof}

\begin{proof}[Proof of Theorem \ref{thm:bdry}]
With \eqref{sum3} in place of \eqref{sum:int}, we can easily get the global pointwise gradient estimate \eqref{ineq:bdry1} using the same ideas as in the proof of Theorem \ref{thm:int}.
\end{proof}
\subsection{Global Lipschitz estimates and modulus of continuity estimates of the gradient}

Let $x_0\in\Bar{\Omega}$ and $0<R\leq R_1$. For any fixed ${\alpha_1}\in (0,\alpha)$, let ${\alpha_2}=({\alpha_1}+\alpha)/2$, and choose $\varepsilon=\varepsilon(n,p,\lambda,\gamma_0,\alpha,\alpha_1)\in (0,1/4)$ sufficiently small such that $\varepsilon^{\alpha_2}<1/4$ and $C\varepsilon^{\alpha-{\alpha_2}}<1$ for both constants $C$ in \eqref{ineq:prop} and \eqref{ineq:cor}. Next we define
\begin{align*}
  &g_1(x,r)=\frac{|\mu|(B_{r}(x)\cap B_{R/2}(x_0))}{r^{n-1}},\quad h_1(x,r)=g_1(x,r)^\frac{1}{p-1},\\
  &\omega^*_1(r):=\omega_1(r)[r\leq R/2]+\omega_1(R/2)[r>R/2],
\end{align*}
and
\begin{align*}
    &\Hat{g}_1(x,t)=\sum_{i=1}^\infty \varepsilon^{{\alpha_2} i}g_1(x,\varepsilon^{-i}t),\quad\Breve{g}_1(x,t)=\sum_{i=1}^\infty \varepsilon^{{\alpha_1} i}g_1(x,\varepsilon^{-i}t), \\ &\Hat{h}_1(x,t)=\sum_{i=1}^\infty \varepsilon^{{\alpha_2} i}h_1(x,\varepsilon^{-i}t),\quad\Breve{h}_1(x,t)=\sum_{i=1}^\infty \varepsilon^{{\alpha_1} i}h_1(x,\varepsilon^{-i}t),\\
    &\Hat{\omega}_1(t)=\sum_{i=1}^\infty \varepsilon^{{\alpha_2} i}\omega_1^*(\varepsilon^{-i}t),\quad
    \Breve{\omega}_1(t)=\sum_{i=1}^\infty \varepsilon^{{\alpha_1} i}\omega_1^*(\varepsilon^{-i}t),
\end{align*}
Indeed, we have
\begin{equation}\label{ineq:breve}
\begin{aligned}
     &\Breve{\omega}_1(t)=\sum_{i=1}^\infty \varepsilon^{{\alpha_1} i}\left(\omega_1(\varepsilon^{-i}t)[\epsilon^{-i}t\leq R/2]+\omega_1(R/2)[\varepsilon^{-i}t>R/2]\right):=\Tilde{\omega}_1(t),\\
&\Breve{g}_1(x,t)\leq \sum_{i=1}^\infty \varepsilon^{{\alpha_1} i}\left( g(x,\epsilon^{-i}t)[\epsilon^{-i}t\leq R/2]+g(x_0,R/2)[\varepsilon^{-i}t>R/2]\right),\\
&\Breve{h}_1(x,t)\leq \sum_{i=1}^\infty \varepsilon^{{\alpha_1} i}\left( h(x,\epsilon^{-i}t)[\epsilon^{-i}t\leq R/2]+h(x_0,R/2)[\varepsilon^{-i}t>R/2]\right),
\end{aligned}
\end{equation}
where the functions $g$ and $h$ are defined in \eqref{def:gh}.

Using the same iteration technique as in Lemma \ref{lem:iter1}, we can obtain from \eqref{ineq:cor} that
\begin{equation}\label{ineq:b}
\begin{aligned}
    \psi(x,\rho)&\leq  C\left(\frac{\rho}{r}\right)^{\alpha_2} \psi(x,r)+C\Hat{h}_1(x,2\rho)
    \\&\quad +C\Hat{g}_1(x,2\rho)\left(\|\nabla u\|_{L^\infty(\Omega_{2r}(x))}+s\right)^{2-p}\\
    &\quad+C\Hat{\omega}_1(\rho)\left(\|\nabla u\|_{L^\infty(\Omega_{2r}(x))}+s\right)
\end{aligned}
\end{equation}
holds for any $x\in \partial \Omega$, $B_{2r}(x)\subset B_{R/2}(x_0)$, and $0<\rho\leq r$.

Similarly, from \eqref{eq7.15} and the fact that $\omega\leq \omega_1$, we have
\begin{equation}\label{ineq:i}
\begin{aligned}
    \phi(x,\rho)&\leq C\left(\frac{\rho}{r}\right)^{\alpha_2} \phi(x,r)+C\Hat{h}_1(x,2\rho)\\
    &\quad +C\Hat{g}_1(x,2\rho)\left(\|\nabla u\|_{L^\infty(\Omega_{2r}(x))}+s\right)^{2-p}\\
    &\quad +C\Hat{\omega}_1(\rho)\left(\|\nabla u\|_{L^\infty(\Omega_{2r}(x))}+s\right)
\end{aligned}
\end{equation}
for any $B_{2r}(x)\subset\subset \Omega$, $B_{2r}(x)\subset B_{R/2}(x_0)$, and $0<\rho\leq r$.

By combining \eqref{ineq:b} and \eqref{ineq:i}, we will show the following estimates.
\begin{lemma}\label{lem:iter}
Let $x\in \Bar{\Omega}$ and $B_{2r}(x)\subset B_{R/2}(x_0)$. There exists a constant $C$  depending only on $\varepsilon$, $n$, $p$, $\lambda$, $\gamma_0$, and $\alpha_1$ such that, for any $0<\rho\leq r\leq R_1$, we have
\begin{itemize}
    \item[(i)]
    \begin{equation}\label{ineq:global}
\begin{aligned}
    \phi(x,\rho)\leq & C\left(\frac{\rho}{r}\right)^{\alpha_2}r^{-n/{\gamma_0}} \|\nabla u\|_{L^{\gamma_0}(\Omega_{r}(x))} +C\Breve{h}_1(x,\rho)
    \\&+C\Breve{g}_1(x,\rho)\left(\|\nabla u\|_{L^\infty(\Omega_{2r}(x))}+s\right)^{2-p}\\
    &+C\Breve{\omega}_1(\rho)\left(\|\nabla u\|_{L^\infty(\Omega_{2r}(x))}+s\right),
\end{aligned}
\end{equation}
   \item[(ii)]
   \begin{equation}\label{sum4:phi}
\begin{aligned}
    \sum_{j=0}^\infty \phi(x,\varepsilon^j\rho)
    \leq& C\left(\frac{\rho}{r}\right)^{\alpha_2} r^{-n/{\gamma_0}} \|\nabla u\|_{L^{\gamma_0}(\Omega_{r}(x))}
    +C\int_0^{\rho}\frac{\Breve{h}_1(x,t)}{t}\,dt
    \\&+C\left(\|\nabla u\|_{L^\infty(\Omega_{2r}(x))}+s\right)^{2-p}
    \int_0^{\rho}\frac{\Breve{g}_1(x,t)}{t}\,dt\\&
    +C\left(\|\nabla u\|_{L^\infty(\Omega_{2r}(x))}+s\right)
    \int_0^{\rho}\frac{\Breve{\omega}_1(t)}{t}\,dt.
\end{aligned}
\end{equation}
\end{itemize}
\end{lemma}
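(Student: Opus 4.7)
The strategy mirrors that of Lemma \ref{lem:iter1}(i)--(ii), but one must interleave the interior decay \eqref{ineq:i} with the boundary decay \eqref{ineq:b} according to how $x$ sits relative to $\partial\Omega$. The key observation is that both \eqref{ineq:i} and \eqref{ineq:b} have exactly the same structure: a leading decay factor $(\rho/r)^{\alpha_2}$ and correction terms built from $\Hat{h}_1$, $\Hat{g}_1$, $\Hat{\omega}_1$. Since $\alpha_1<\alpha_2$ and $\varepsilon<1$, the pointwise inequalities $\Hat{h}_1\le\Breve{h}_1$, $\Hat{g}_1\le\Breve{g}_1$, and $\Hat{\omega}_1\le\Breve{\omega}_1$ hold termwise, so any estimate in terms of the hatted quantities immediately yields the asserted bound in terms of the breved quantities.

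For part (i), I pick $j\ge0$ with $\varepsilon^{j+1}r<\rho\le\varepsilon^{j}r$, set $r_k=\varepsilon^{-k}\rho$ for $k=0,\ldots,j$, and distinguish three cases according to $d:=\mathrm{dist}(x,\partial\Omega)$. If $x\in\partial\Omega$, I iterate \eqref{ineq:b} $j$ times along the scales $r_k$ and close the argument with the trivial bound $\psi(x,r)\le Cr^{-n/\gamma_0}\|\nabla u\|_{L^{\gamma_0}(\Omega_r(x))}$, exactly as in Lemma \ref{lem:iter1}(i). If $x\in\Omega$ with $2\rho\le d$, then the entire iteration stays interior, so \eqref{ineq:global} follows directly from iterating \eqref{ineq:i}. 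The remaining case is $x\in\Omega$ with $d<2\rho$: choose $y_0\in\partial\Omega$ realizing the distance $d$, so that $\Omega_{r_k}(x)\subset\Omega_{3r_k}(y_0)$ and $\Omega_{r_k}(y_0)\subset\Omega_{3r_k}(x)$. Combined with \eqref{volume}, testing the infimum defining $\phi(x,r_k)$ with the same trial vector used for $\phi(y_0,3r_k)$ yields $\phi(x,r_k)\le C\psi(y_0,3r_k)$, and conversely the $L^{\gamma_0}$-averages of $\nabla u$ and the correction quantities $g_1$, $h_1$, $\omega_1^*$ based at $y_0$ are controlled by their counterparts based at $x$ with a slightly larger radius. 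Feeding this into the boundary iteration at $y_0$ already established in the first case and transferring the output back to $x$ produces \eqref{ineq:global}.

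Part (ii) is then a direct consequence of (i): apply (i) with $\varepsilon^{j}\rho$ in place of $\rho$ for each $j\ge0$ and sum in $j$. The geometric series $\sum_j(\varepsilon^{j}\rho/r)^{\alpha_2}$ sums to a constant multiple of $(\rho/r)^{\alpha_2}$, while the three correction sums $\sum_j\Breve{h}_1(x,\varepsilon^{j}\rho)$, $\sum_j\Breve{g}_1(x,\varepsilon^{j}\rho)$, and $\sum_j\Breve{\omega}_1(\varepsilon^{j}\rho)$ are converted to the integrals appearing in \eqref{sum4:phi} by a quasi-doubling argument entirely analogous to Lemma \ref{lem:t} applied to $\Breve{h}_1$, $\Breve{g}_1$, $\Breve{\omega}_1$; the required two-sided comparisons $c_1 f(t)\le f(s)\le c_2 f(t)$ for $\varepsilon t\le s\le t$ hold for each of the three functions by the same elementary manipulation of the defining series. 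The main technical obstacle I anticipate is the transfer step in the third case of (i): the change of base point from $x$ to $y_0\in\partial\Omega$ must be absorbed into the correction terms (both the $L^{\gamma_0}$-average of $\nabla u$ and the measure-theoretic potentials $g_1$, $h_1$) without spoiling the dependence on $\|\nabla u\|_{L^\infty(\Omega_{2r}(x))}+s$ in \eqref{ineq:global} and without enlarging $r$ by more than a fixed geometric constant, a bookkeeping task similar to the one carried out in the proof of \eqref{sum3}.
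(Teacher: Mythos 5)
Your case split has a genuine gap. The estimate \eqref{ineq:i} requires $B_{2r}(x)\subset\subset\Omega$ at the \emph{larger} scale $r$, not merely at $\rho$, so the hypothesis $2\rho\le d:=\mathrm{dist}(x,\partial\Omega)$ in your Case B does \emph{not} guarantee that ``the entire iteration stays interior'': whenever $2\rho\le d$ but $d<r/4$, the interior inequality \eqref{ineq:i} can only be applied up to the intermediate scale $r_1\sim d$, and the passage from $\sim d$ up to $r$ must go through the boundary estimate \eqref{ineq:b}. The paper therefore splits into three cases according to where $d$ falls relative to \emph{both} $\rho$ and $r$, namely $r/4\le d$ (purely interior), $d\le 4\rho$ (direct transfer to a boundary point $y_0$), and the intermediate band $4\rho<d<r/4$. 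Your Case B absorbs the paper's first and third cases and incorrectly treats both as purely interior, so the asserted bound is not established in the range $4\rho<d<r/4$.

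Handling the intermediate case is not merely bookkeeping. One applies \eqref{ineq:i} with the reduced radius $r_1=d/4$ to get $\phi(x,\rho)\le C(\rho/r_1)^{\alpha_2}\phi(x,r_1)+\dots$, then transfers $\phi(x,r_1)\le C\psi(y_0,5r_1)$ to $y_0\in\partial\Omega$ and applies \eqref{ineq:b} up to radius $r/2$, picking up a factor $(r_1/r)^{\alpha_2}$ plus hatted correction terms at scale $\sim r_1$. The decay exponents compose, $(\rho/r_1)^{\alpha_2}(r_1/r)^{\alpha_2}=(\rho/r)^{\alpha_2}$, but the hatted corrections at scale $\sim r_1$, multiplied by $(\rho/r_1)^{\alpha_2}$, must still be pushed down into $\Breve g_1(x,\rho)$, $\Breve h_1(x,\rho)$, $\Breve\omega_1(\rho)$. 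This requires the quantitative comparison $(\rho/r_1)^{\alpha_2}\Hat g_1(x,r_1)\le C\Breve g_1(x,\rho)$ of Lemma \ref{tech2}(iii), which uses the strict gap $\alpha_1<\alpha_2$ via $k\varepsilon^{\alpha_2 k}\le C\varepsilon^{\alpha_1 k}$ and is genuinely stronger than the termwise bound $\Hat g_1\le\Breve g_1$ you invoke. Your derivation of part (ii) from part (i), summing over $\varepsilon^j\rho$ and converting sums to integrals by quasi-doubling, is correct and matches the paper once part (i) is repaired.
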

To prove Lemma \ref{lem:iter}, we also need the following technical lemma.

\begin{lemma}\label{tech2}
Let $x, y\in \Bar{\Omega}$ and $p\in (1,2)$.
Then for $g_1,h_1,\Hat{g}_1,\Hat{h}_1,\Breve{g}_1,\Breve{h}_1,\Hat{\omega}_1,\Breve{\omega}_1$ defined as above, we have the following:
\begin{itemize}
    \item[(i)] There exist constants $C_1,C_2>0$ depending on $\varepsilon$, $n$, $p$, $\alpha$ and $\alpha_1$ such that for any fixed $x\in\Bar{\Omega}$, and any $f\in\big\{\Hat{g}_1(x,\cdot), \Hat{h}_1(x,\cdot),\Hat{\omega}_1, \Breve{g}_1(x,\cdot), \Breve{h}_1(x,\cdot),\Breve{\omega}_1 \big\}$, we have
    $$
    C_1 f(t)\leq f(s)\leq C_2 f(t),\quad \text{whenever} \quad 0<\epsilon t\leq s\leq t.
    $$
    \item[(ii)] There exists a constant $C>0$ depending on $\varepsilon$, $n$, $p$, $\alpha$ and $\alpha_1$ such that for any $0<\epsilon r\leq \rho\leq r$ with $\Omega_{\rho}(x)\subset\Omega_{r}(y)$, and any  $F\in\{\Hat{g}_1,\Hat{h}_1,\Breve{g}_1,\Breve{h}_1\}$, we have $$F(x,\rho)\leq C F(y,r).$$
    \item[(iii)] For any $0<\rho\leq r$, there exists a constant $C>0$ depending on $\varepsilon$, $n$, $p$, $\alpha$ and $\alpha_1$ such that the following hold
    \begin{align*}
    \left(\frac{\rho}{r}\right)^{\alpha_2} \Hat{\omega}_1(r)&\leq C \Breve{\omega}_1(\rho),\\
    \left(\frac{\rho}{r}\right)^{\alpha_2} \Hat{g}_1(x,r)&\leq C \Breve{g}_1(x,\rho),\\
    \left(\frac{\rho}{r}\right)^{\alpha_2} \Hat{h}_1(x,r)&\leq C \Breve{h}_1(x,\rho).
    \end{align*}
\end{itemize}
\end{lemma}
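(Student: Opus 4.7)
My plan is to adapt the proof of Lemma \ref{lem:t}, exploiting the quasi-monotonicity in the radial variable of the three base functions $g_1(x,\cdot)$, $h_1(x,\cdot)$, and $\omega_1^*$, together with an index shift in the geometric series defining the six combined functions $\Hat{g}_1, \Hat{h}_1, \Hat{\omega}_1, \Breve{g}_1, \Breve{h}_1, \Breve{\omega}_1$.

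For assertion (i), the key observation is that each base function is quasi-monotone: $\omega_1^*$ is nondecreasing, and since $s \mapsto |\mu|(B_s(x) \cap B_{R/2}(x_0))$ is nondecreasing, we have $g_1(x,s) \le (t/s)^{n-1} g_1(x,t)$ for $0 < s \le t$, with $h_1 = g_1^{1/(p-1)}$ inheriting the analogous bound. Hence for $\varepsilon t \le s \le t$, every $f$ in the list satisfies $f(s) \le C f(t)$ termwise in its defining series. For the reverse inequality I perform the index shift $f(t) = \sum_{i \ge 1} \varepsilon^{\alpha_\star i} G(\varepsilon^{-i} t) = \varepsilon^{-\alpha_\star} \sum_{j \ge 2} \varepsilon^{\alpha_\star j} G(\varepsilon^{-j}\varepsilon t) \le \varepsilon^{-\alpha_\star} f(\varepsilon t)$ (with $\alpha_\star \in \{\alpha_1, \alpha_2\}$ and $G$ the appropriate base function), then bound $f(\varepsilon t)$ above by $C f(s)$ using the upper bound just established, applied to the chain $\varepsilon s \le \varepsilon t \le s$.

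For assertion (ii), the inclusion $\Omega_\rho(x) \subset \Omega_r(y)$ together with $|\mu|(\mathbb{R}^n \setminus \Omega) = 0$ implies $|x - y| \le r$ (by taking a sequence in $\Omega \cap B_\rho(x)$ approximating $x$). Consequently, for every $i \ge 1$, $B_{\varepsilon^{-i}\rho}(x) \subset B_{\varepsilon^{-i}\rho + r}(y) \subset B_{\varepsilon^{-(i+1)}r}(y)$, using $r \le \varepsilon^{-1}\rho$ and $\varepsilon \le 1/4$. Intersecting with $B_{R/2}(x_0)$ and taking $|\mu|$-measure gives $g_1(x, \varepsilon^{-i}\rho) \le \varepsilon^{-2(n-1)} g_1(y, \varepsilon^{-(i+1)} r)$; summing in $i$ and reindexing $j = i + 1$ yields $\Hat{g}_1(x, \rho) \le \varepsilon^{-2(n-1) - \alpha_2} \Hat{g}_1(y, r)$. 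The arguments for $\Breve{g}_1, \Hat{h}_1, \Breve{h}_1$ are identical, modulo the exponent $1/(p-1)$ in the $h_1$ cases.

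For assertion (iii), choose the integer $k \ge 0$ with $\varepsilon^{k+1} r < \rho \le \varepsilon^k r$, so that $(\rho/r)^{\alpha_2} \le \varepsilon^{k\alpha_2}$ and $\varepsilon^{-(i+k+1)}\rho \ge \varepsilon^{-i} r$ for every $i \ge 1$. Monotonicity of $\omega_1^*$ gives $\omega_1^*(\varepsilon^{-i} r) \le \omega_1^*(\varepsilon^{-(i+k+1)}\rho)$; summing and reindexing $j = i + k + 1$ rewrites $\Hat{\omega}_1(r)$ as $\varepsilon^{-\alpha_2(k+1)} \sum_{j \ge k+2} \varepsilon^{\alpha_2 j} \omega_1^*(\varepsilon^{-j}\rho)$. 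Multiplying by $(\rho/r)^{\alpha_2}$ cancels the factor $\varepsilon^{-\alpha_2 k}$, and the elementary inequality $\varepsilon^{\alpha_2 j} \le \varepsilon^{\alpha_1 j}$ (valid because $\alpha_2 > \alpha_1$ and $\varepsilon < 1$) bounds the resulting tail by $C \Breve{\omega}_1(\rho)$. The $g_1$ and $h_1$ statements are handled identically, picking up the extra absolute factor $\varepsilon^{-(n-1)}$ (resp.\ $\varepsilon^{-(n-1)/(p-1)}$) from the radii normalizations. The only mild obstacle is keeping track of the index shifts and the accumulated powers of $\varepsilon$ so that the final constants depend only on the stated parameters; there is no substantive analytic difficulty beyond the template already established in Lemma \ref{lem:t}.
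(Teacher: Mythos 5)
Your proposal is correct and follows essentially the same template as the paper's proof: quasi-monotonicity of the base functions $g_1, h_1, \omega_1^*$ combined with index shifts in the defining geometric series. One point worth flagging: your treatment of assertion (ii) is in fact a bit more careful than the paper's. The paper simply asserts that $\Omega_\rho(x)\subset\Omega_r(y)$ propagates to $\Omega_{\varepsilon^{-i}\rho}(x)\subset\Omega_{\varepsilon^{-i}r}(y)$ for all $i\ge 0$, which does not follow from the hypothesis alone when the sets are genuine intersections with $\Omega$ rather than full balls; your detour through $|x-y|\le r$ (obtained from $x\in\bar\Omega$, not from $|\mu|(\R^n\setminus\Omega)=0$, which is a small misattribution) followed by a shift by one index in the dilation parameter is a clean fix at the cost only of an extra fixed power of $\varepsilon^{-1}$. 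Your version of (iii) also dispenses with the paper's device of bounding a single term by the double sum $\sum_{j\ge 0}\sum_{i\ge 1}$ and invoking $k\varepsilon^{\alpha_2 k}\le C\varepsilon^{\alpha_1 k}$; a direct reindexing plus the trivial pointwise comparison $\varepsilon^{\alpha_2 j}\le\varepsilon^{\alpha_1 j}$ suffices, which is a mild simplification. These are refinements rather than a genuinely different route, so the overall assessment is that the argument matches the paper's.
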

\begin{proof}
We will only show the proof for $g$ since the other cases are similar. Noting that $$
g_1(x,s)\leq \epsilon^{1-n} g_1(x,t),\quad g_1(x,\epsilon t)\leq \epsilon^{1-n} g_1(x,s)
$$
whenever $\epsilon t\leq s\leq t$ and
$\Hat{g}_1(x,t)\leq \varepsilon^{-{\alpha_2}} \Hat{g}_1(x,\epsilon t)$, assertion $(i)$ follows. Assertion $(ii)$ follows similarly by observing the fact that $\Omega_{\varepsilon^{-i}\rho}(x)\subset\Omega_{\varepsilon^{-i}r}(y)$ whenever $i\geq 0$ since $\Omega_{\rho}(x)\subset\Omega_{r}(y)$.
It remains to prove assertion $(iii)$. Since $0<\rho\leq r$, there exists an integer $j\geq 0$ such that $\varepsilon^{-j}\rho\leq r< \varepsilon^{-j-1}\rho$. Therefore, by part $(i)$,
\begin{align*}
    &\left(\frac{\rho}{r}\right)^{\alpha_2}\Hat{g}_1(x,r) \leq C \varepsilon^{{\alpha_2} j}\Hat{g}_1(x,\varepsilon^{-j}\rho)\\
    &\leq C\sum_{j=0}^\infty \sum_{i=1}^\infty \varepsilon^{{\alpha_2} (i+j)} g_1(x,\varepsilon^{-i-j}\rho)\\
    &=C \sum_{k=1}^\infty k\varepsilon^{{\alpha_2} k}g_1(x,\varepsilon^{-k}\rho)\leq C \Breve{g}_1(x,\rho),
\end{align*}
where we used the fact that $k\varepsilon^{{\alpha_2} k} \leq C \varepsilon^{{\alpha_1} k}$ in the last inequality, since ${\alpha_1}<{\alpha_2}$.
\end{proof}

Now we are ready to prove Lemma \ref{lem:iter}.

\begin{proof}[Proof of Lemma \ref{lem:iter}]
Without loss of generality, we may assume $x=0$. Note that if $r/16\leq\rho\leq r$, then \eqref{ineq:global} follows from the definition of $\phi$. Hence we only need to consider the case when $0<\rho<r/16$. We consider the following three cases:
$$r/4\leq \text{dist}(0,\partial \Omega),\quad \text{dist}(0, \partial \Omega)\leq 4\rho,\quad 4\rho< \text{dist}(0,\partial \Omega)<r/4.$$

{\em Case 1: $r/4\leq \text{dist}(0,\partial \Omega)$.} Set $r_1=r/16$. Since $B_{4r_1}\subset \Omega$, from \eqref{ineq:i} we have
\begin{align*}
\phi(0,\rho)&\leq C\left(\frac{\rho}{r_1}\right)^{\alpha_2} \phi(0,r_1)+C\Hat{h}_1(0,2\rho)\\
    &\quad+C\Hat{g}_1(0,2\rho)\left(\|\nabla u\|_{L^\infty(\Omega_{2r_1})}+s\right)^{2-p}+C\Hat{\omega}_1(\rho)\left(\|\nabla u\|_{L^\infty(\Omega_{2r_1})}+s\right).
\end{align*}
Thus we can easily get \eqref{ineq:global} from Lemma \ref{tech2} and the fact that
\begin{align*}
    \phi(0,r_1)\leq Cr^{-n/{\gamma_0}} \|\nabla u\|_{L^{\gamma_0}(\Omega_{r})}.
\end{align*}

{\em Case 2: $\text{dist}(0, \partial \Omega)\leq 4\rho$.} Choose $y_0\in \partial \Omega$ such that $\text{dist}(0,\partial \Omega)=|y_0|$. Then $B_{10 \rho}(y_0)\subset B_{14\rho}\subset B_{r}$ and from \eqref{ineq:b}, we have
\begin{align*}
    &\phi(0,\rho)\leq  C \psi(y_0,5\rho)\\
    &\leq C\left(\frac{\rho}{r}\right)^{\alpha_2} \psi(y_0,r/2)+C\Hat{h}_1(y_0,10\rho)
    \\&\quad +C\Hat{g}_1(y_0,10\rho)\left(\|\nabla u\|_{L^\infty(\Omega_{r}(y_0))}+s\right)^{2-p}+C\Hat{\omega}_1(5\rho)\left(\|\nabla u\|_{L^\infty(\Omega_{r}(y_0))}+s\right).
\end{align*}
Thus from the fact that $\Omega_r(y_0)\subset \Omega_{2r}$, $\Omega_{r/2}(y_0)\subset \Omega_r$, and $\Omega_{10 \rho}(y_0)\subset \Omega_{14\rho}$, we get \eqref{ineq:global} by using Lemma \ref{tech2}.

{\em Case 3: $4 \rho< \text{dist}(0,\partial \Omega)<r/4$.} Set $r_1=\text{dist}(0,\partial \Omega)/4>\rho$. Using \eqref{ineq:i}, we obtain
\begin{align*}
\phi(0,\rho)&\leq C\left(\frac{\rho}{r_1}\right)^{\alpha_2} \phi(0,r_1)+C\Hat{h}_1(0,2\rho)\\
    &\quad+C\Hat{g}_1(0,2\rho)\left(\|\nabla u\|_{L^\infty(\Omega_{2r_1})}+s\right)^{2-p}+C\Hat{\omega}_1(\rho)\left(\|\nabla u\|_{L^\infty(\Omega_{2r_1})}+s\right).
\end{align*}
On the other hand, choose $y_0\in \partial \Omega$ such that
$\text{dist}(0,\partial \Omega)=|y_0|$. Therefore, $B_{r_1}\subset B_{5r_1}(y_0), $ $B_{r}(y_0)\subset B_{2r}$ and from \eqref{ineq:b} we have
\begin{align*}
    &\phi(0,r_1)\leq  C \psi(y_0,5r_1)\\
    &\leq  C\left(\frac{r_1}{r}\right)^{\alpha_2} \psi(y_0,r/2)+C\Hat{h}_1(y_0,10r_1)
    \\&\quad +C\Hat{g}_1(y_0,10r_1)\left(\|\nabla u\|_{L^\infty(\Omega_{r}(y_0))}+s\right)^{2-p}+C\Hat{\omega}_1(5r_1)\left(\|\nabla u\|_{L^\infty(\Omega_{r}(y_0))}+s\right).
\end{align*}
Noting that $\Omega_r(y_0)\subset \Omega_{2r}$, $\Omega_{r/2}(y_0)\subset \Omega_r$, and $\Omega_{10 r_1}(y_0)\subset \Omega_{14r_1}$, we get \eqref{ineq:global} by combining the last two estimates and applying Lemma \ref{tech2}.

Finally, replacing $\rho$ with $\varepsilon^j \rho$ and summing in $j$, we get \eqref{sum4:phi} by using Lemma \ref{tech2} and the comparison principle of Riemann integrals. The lemma is proved.
\end{proof}

\begin{remark}\label{local}
We emphasize that Lemma \ref{lem:iter} has a local nature. Indeed, it can be seen from the proof that we only need Dirichlet boundary condition $u=0$ on $\partial\Omega\cap B_{R/2}(x_0)$ and $C^{1,\text{Dini}}$ regularity of $\partial\Omega\cap B_{R/2}(x_0)$ for these estimates to hold. Therefore, our Lipschitz estimates and modulus of continuity estimates, which will be deduced from
Lemma \ref{lem:iter}, also have a local nature.
\end{remark}

Recall the definition of $\mathbf{q}_{x,r}$ from \eqref{def:qxr} and keep \eqref{volume} in mind. By following almost the same proof of \eqref{ineq:diff}, we obtain that for any Lebesgue point $x\in\Omega$ of the vector-valued function $\nabla u$ and $\rho\in (0,R_1]$,
\begin{equation}\label{ineq:diff2}
     |\nabla u(x)-\mathbf{q}_{x,\rho}|\leq C\sum_{j=0}^\infty \phi(x,\varepsilon^j\rho),
\end{equation}
where $C$ is a constant depending only on $n$ and $\gamma_0$.

\begin{proof}[Proof of Theorem \ref{thm:bdry:lip}]

We will prove a boundary Lipschitz estimate
\begin{equation}\label{ineq:bdry:lip2}
\|\nabla u\|_{L^\infty(\Omega_{R/16}(x_0))}
         \leq C\big\|\mathbf{I}_1^R(|\mu|)\big\|^\frac{1}{p-1}_{L^\infty(\Omega_{R}(x_0))}+ CR^{-\frac{n}{2-p}} \||\nabla u|+s\|_{L^{2-p}(\Omega_{R}(x_0))}
  \end{equation}
for any $x_0\in \partial\Omega$ and $R\leq R_1$, assuming that
\begin{equation}\label{finite Riesz bdry}
    \big\|\mathbf{I}_1^R(|\mu|)\big\|_{L^\infty(B_{R}(x_0))}<\infty.
\end{equation}
Then \eqref{ineq:bdry:lip} follows by a standard covering argument using \eqref{ineq:int:lip} and \eqref{ineq:bdry:lip2}.
The proof of \eqref{ineq:bdry:lip2} is similar to that of Theorem \ref{thm:int:lip} so we will only focus on the differences.

{\em Step 1: The case when $u\in C^1(\overline{\Omega_{R/2}(x_0)})$.}

With \eqref{sum4:phi} in place of \eqref{sum1:phi}, using the same iteration technique as in the proof of Theorem \ref{thm:int:lip}, we get the following estimate:
\begin{equation}\label{infty-norm-bdry}
    \begin{aligned}
         &\|\nabla u\|_{L^\infty(\Omega_{R/4}(x_0))}+s\leq C R^{-n/{\gamma_0}} \|\nabla u\|_{L^{\gamma_0}(\Omega_{R/2}(x_0))}
   \\ &\qquad+C   \sup_{x\in \Omega_{R/2}(x_0)}\int_0^{\frac{R}{2}}\frac{\Breve{h}_1(x,t)}{t}\,dt
    +C \sup_{x\in \Omega_{R/2}(x_0)} \left(\int_0^{\frac{R}{2}}\frac{\Breve{g}_1(x,t)}{t}\,dt\right)^{\frac{1}{p-1}}+Cs.
    \end{aligned}
\end{equation}
Using \eqref{ineq:breve} and direct computations, we have
\begin{align*}
     &\int_0^{{R/2}}\frac{\Breve{g}_1(x,t)}{t}\,dt\leq C\,\mathbf{I}_1^{{R}}(|\mu|)(x)+C\,\frac{|\mu|(B_{R/2}(x_0))}{R^{n-1}},\\
     &\int_0^{R/2}\frac{\Tilde{h}_1(x,t)}{t}\,dt\leq C \left(\mathbf{I}_1^{{R}}(|\mu|)(x)\right)^\frac{1}{p-1}+C\left(\frac{|\mu|(B_{R/2}(x_0))}{R^{n-1}}\right)^\frac{1}{p-1}.
\end{align*}
Therefore, from \eqref{infty-norm-bdry} and the fact that $\gamma_0\leq 2-p$ (cf. Lemma \ref{lem:u-w2}), we obtain
\begin{equation}\label{infty-norm-bdry2}
        \|\nabla u\|_{L^\infty(\Omega_{R/4}(x_0))}
         \leq C\big\|\mathbf{I}_1^R(|\mu|)\big\|^\frac{1}{p-1}_{L^\infty(\Omega_{R}(x_0))}+ CR^{-\frac{n}{2-p}} \||\nabla u|+s\|_{L^{2-p}(\Omega_{R}(x_0))}.
\end{equation}
{\em Step 2: The general case.}

We use an approximation argument with the aid of the regularized distance introduced by Lieberman \cite{Lieb85}. Here we refer to a modified version in \cite{dong2020on}.
Let $d(\cdot)$ be the regularized distance defined in \cite[Lemma 5.1]{dong2020on} ($\psi(\cdot)$ in that paper) and  $\Omega^{k}=\{x\in \Omega: d(x)>1/k\}$. Then from \cite[Lemma 5.1]{dong2020on}, we know that $\Omega^k$ has a smooth boundary and the $C^{1,\text{Dini}}$-properties of $\partial\Omega_k$ are the same as those of $\partial\Omega$ up to some constant independent of $k$.
We take a sequence of standard mollifiers $\{\varphi_k\}$ and mollify $\mu$ and $a$ by setting
$$\mu_k(x)=(\mu*\varphi_k)(x),\quad x\in \Omega; \quad a^k(x)=(a*\varphi_k)(x), \quad x\in \Omega^k.$$
We know that $\mu\in W^{-1,p'}(\Omega_R(x_0))$ and therefore
\begin{equation*}
 \|\mu_k-\mu\|_{W^{-1,p'}(\Omega_R(x_0))}\to 0.
\end{equation*}
Recalling the fact that we have a $C^{1,\text{Dini}}$ coordinate in $\Omega_R(x_0)$ since $R\leq R_1$, we can take a sequence of cut-off functions $\zeta_k\in C^\infty(\R^n)$ satisfying
$\zeta_k=1$ in $\Omega^{k/4}\cap B_R(x_0)$, $\zeta_k=0$ in $(\Omega\backslash\Omega^{k/2})\cap B_R(x_0)$, and $\|\nabla \zeta_k\|_{L^{\infty}}\leq 16k$.

Next we let $u_k\in u\zeta_k+W^{1,p}_0(\Omega^k\cap B_R(x_0))$ be the unique solution to
\begin{equation}\label{eq:uk2}
    \left\{
\begin{aligned}
     -\div\left(a^k(x)(|\nabla u_k|^2+s^2)^\frac{p-2}{2}\nabla u_k\right) =&  \mu_k\quad \text{in} \quad \Omega^k\cap B_R(x_0), &\\
     u_k =&  u\zeta_k \quad \text{on}  \quad \partial (\Omega^k\cap B_R(x_0)).  &\\
\end{aligned}
\right.
\end{equation}
Since $u_k=u\zeta_k=0$ on $B_R(x_0)\,\cap\,\partial \Omega^k$, we can always assume $u_k\in u\zeta_k+ W^{1,p}_0(\Omega_R(x_0))$ by taking the zero extension of $u_k$ in $(\Omega\backslash \Omega^k)\cap B_R(x_0)$.
Since $u=0$ on $B_R(x_0)\,\cap\,\partial \Omega$, by Hardy's inequality, we have
\begin{align*}
    &\|u\nabla\zeta_k\|_{L^p\big(\Omega_R(x_0)\big)}\leq 16 \|ku\|_{L^p\big((\Omega\backslash\Omega^{k/4})\cap B_R(x_0)\big)}\\
    &\leq C\|{u(x)}/{d(x)}\|_{L^p\big((\Omega\backslash\Omega^{k/4})\cap B_R(x_0)\big)}
    \leq C\|\nabla u\|_{L^p\big((\Omega\backslash\Omega^{k/4})\cap B_R(x_0)\big)}\to 0
\end{align*}
as $k\to \infty$.
Therefore, we know that
\begin{equation}\label{cutconv}
    \|u-u\zeta_k\|_{W^{1,p}(\Omega_R(x_0))}\to 0.
\end{equation}
Thus by choosing $u_k-u\zeta_k$ as a test function in \eqref{eq:uk2}, following the proof of \eqref{bound1}, and using \eqref{cutconv}, we can show that
$\|\nabla u_k\|_{L^{p}(\Omega_R(x_0))}$ is uniformly bounded in $k$. Also, choosing $(u_k-u\zeta_k)1_{\Omega_R(x_0)}$ as a test function in \eqref{eq:p} and \eqref{eq:uk2}, similarly we obtain
$$
\int_{\Omega_R(x_0)}|V(\nabla u_k)-V(\nabla u)|^2\,dx\to 0\quad \text{as}\,k\to\infty,
$$
which again implies
$$\nabla u_k\to \nabla u \quad \text{strongly in } L^{p}(\Omega_R(x_0)).$$
By the classical boundary regularity theory (see, for instance, \cite{MR969499}), we have $u_k\in C^1(\overline{\Omega^k\cap B_{R/2}(x_0)})$.
Note that for sufficiently large $k$, there exists $x_k\in B_R(x_0)\,\cap\,\partial \Omega^k$, such that $|x_k-x_0|\leq R/16$. Thus $B_{R/16}(x_0)\subset B_{R/8}(x_k)$, $B_{R/4}(x_k)\subset B_{R/2}(x_0)$ and $B_{R/2}(x_k)\subset B_{3R/4}(x_0)$.
Therefore, using \eqref{infty-norm-bdry2} in Step 1 and Remark \ref{local}, we get
\begin{align*}
   &\|\nabla u_k\|_{L^\infty\big(\Omega^k\cap B_{R/16}(x_0)\big)}\leq \|\nabla u_k\|_{L^\infty\big(\Omega^k\cap B_{R/8}(x_k)\big)}\\
         &\leq C\big\|\mathbf{I}_1^{R/2}(|\mu_k|)\big\|^\frac{1}{p-1}_{L^\infty\big(\Omega^k\cap B_{R/2}(x_k)\big)}+ CR^{-\frac{n}{2-p}} \||\nabla u_k|+s\|_{L^{2-p}\big(\Omega^k\cap B_{R/2}(x_k)\big)}\\
  &\leq C\big\|\mathbf{I}_1^{R/2}(|\mu_k|)\big\|^\frac{1}{p-1}_{L^\infty\big(\Omega^k\cap B_{3R/4}(x_0)\big)}+ CR^{-\frac{n}{2-p}} \||\nabla u_k|+s\|_{L^{2-p}\big(\Omega^k\cap B_{3R/4}(x_0)\big)}.
\end{align*}
By extracting a subsequence and taking the limit as $k\to \infty$, we obtain \eqref{ineq:bdry:lip2}.
\end{proof}

\begin{proof}[Proof of Corollary \ref{thm:glo:lip}]
By testing \eqref{eq:p} with $u$, following the proof of \eqref{bound1}, we obtain
$$
\|\nabla u\|_{L^p(\Omega)}\leq C\|\mu\|^\frac{1}{p-1}_{W^{-1,p'}(\R^n)}+Cs,
$$
where $p'=p/(p-1)$.
From \cite[Theorem 1]{hedberg1983thin}, we also have
$$
\|\mu\|_{W^{-1,p'}(\R^n)}\leq C\Big(\int_{\R^n} \mathbf{W}_{1,p}^1(|\mu|)d|\mu|\Big)^{\frac{p-1}{p}}\leq  C\big\|\mathbf{I}^{1}_1(|\mu|)\big\|_{L^{\infty}(\Omega)}.
$$
Therefore, Corollary \ref{thm:glo:lip} follows by combining \eqref{ineq:bdry:lip}, H\"older's inequality, and the last two inequalities.
\end{proof}

Now we turn to global modulus of continuity estimates of the gradient.
Recall that we fixed an $\varepsilon\in (0,1/4)$ sufficiently small such that
$$
C\varepsilon^{\alpha-{\alpha_2}}<1\quad \text{and}\quad
\varepsilon^{\alpha_2}<1/4
$$
for both constants $C$ in \eqref{ineq:prop} and \eqref{ineq:cor}, where $\alpha\in(0,1)$ is the same constant as in Theorem \ref{thm:osc}, ${\alpha_1}\in (0,\alpha)$, and $\alpha_2=(\alpha_1+\alpha)/2$. We also took $R\in(0, R_1]$ and defined
\begin{equation}\label{def:tilde2}
\begin{aligned}
 &\Tilde{\omega}_1(t)=\sum_{i=1}^\infty \varepsilon^{{\alpha_1} i}\left(\omega_1(\varepsilon^{-i}t)[\epsilon^{-i}t\leq R/2]+\omega_1(R/2)[\varepsilon^{-i}t>R/2]\right),\\
 &\Tilde{\mathbf{I}}_1^\rho(|\mu|)(x)=\sum_{i=1}^\infty \varepsilon^{{\alpha_1} i}\left(\mathbf{I}_1^{\varepsilon^{-i}\rho}(|\mu|)(x)[\varepsilon^{-i}\rho\leq R/2]+\mathbf{I}_1^{R/2}(|\mu|)(x)[\varepsilon^{-i}\rho>R/2]\right),\\
&\Tilde{\mathbf{W}}_{1/p,p}^\rho(|\mu|)(x)\\
&=\sum_{i=1}^\infty \varepsilon^{{\alpha_1} i}\left(\mathbf{W}_{1/p,p}^{\varepsilon^{-i}\rho}(|\mu|)(x)[\varepsilon^{-i}\rho\leq R/2]+\mathbf{W}_{1/p,p}^{R/2}(|\mu|)(x)[\varepsilon^{-i}\rho>R/2]\right),
\end{aligned}
\end{equation}
where we used the Iverson bracket notation, i.e., $[P] = 1$ if $P$ is true and $[P] = 0$ otherwise, and $\mathbf{I}_1$ and $\mathbf{W}_{1/p,p}$ are the Riesz and Wolff potentials defined in \eqref{def:riesz} and \eqref{def:wolff}, respectively.

Our global modulus of continuity estimate of the gradient is stated as follows.
\begin{theorem}\label{thm:bdry2}
Assume the conditions of Theorem \ref{thm:bdry:lip} and ${\alpha_1}\in (0,\alpha)$, where $\alpha$ is the constant in Theorem \ref{thm:osc}. Then there exist constants $R_1=R_1(R_0,\omega_0)\in(0,R_0)$
and $C=C(n,p,\lambda,\alpha_1, \omega,R_0,\omega_0)$, such that for any $x_0\in \Bar{\Omega}$, $R\in (0,R_1]$, and $x,y\in \Omega_{R/4}(x_0)$ being Lebesgue points of the vector-valued function $\nabla u$, it holds that
\begin{equation}\label{cont:glob}
\begin{aligned}
    &|\nabla u(x)-\nabla u(y)|\\
    &\leq C\,\mathbf{M}_1\Big[\left(\frac{\rho}{R}\right)^{\alpha_1}+\int_0^{\rho}\frac{\Tilde{\omega}_1(t)}{t}\,dt\Big]
    +C\,\big\|\Tilde{\mathbf{W}}_{1/p,p}^{\rho}(|\mu|)\big\|_{L^\infty(\Omega_{R/4}(x_0))}\\
    &\quad +C\,\mathbf{M}_1^{2-p}
    \big\|\Tilde{\mathbf{I}}_1^{\rho}(|\mu|)\big\|_{L^\infty(\Omega_{R/4}(x_0))}
\end{aligned}
\end{equation}
where $\rho=|x-y|$, $\omega_1=\omega+\omega_0$, $\Tilde{\omega}_1$, $\Tilde{\mathbf{W}}_{1/p,p}$, and $\Tilde{\mathbf{I}}_1$ are defined in \eqref{def:tilde2}, and
$$
\mathbf{M}_1:=R^{-\frac{n}{2-p}}\||\nabla u|+s\|_{L^{{2-p}}(\Omega_{R}(x_0))}+\big\|\mathbf{I}_1^{R}(|\mu|)\big\|^\frac{1}{p-1}_{L^\infty(\Omega_{R}(x_0))}.
$$
\end{theorem}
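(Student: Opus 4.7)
The plan is to follow the structure of the proof of Theorem \ref{thm4.1} while swapping every interior tool for its boundary counterpart: the decay sum \eqref{sum4:phi} replaces \eqref{sum1:phi}, the Lipschitz bound \eqref{ineq:bdry:lip} replaces \eqref{ineq:int:lip}, and the reconstruction formula \eqref{ineq:diff2} replaces \eqref{ineq:diff}. Also, Remark \ref{local} is what allows me to apply these local-in-nature ingredients uniformly over $z\in\Omega_{R/4}(x_0)$.

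Fix $x_0\in\bar\Omega$, $R\in(0,R_1]$, Lebesgue points $x,y\in\Omega_{R/4}(x_0)$ of $\nabla u$, and set $\rho=|x-y|$. The case $\rho>R/16$ is trivial: $|\nabla u(x)-\nabla u(y)|\leq 2\|\nabla u\|_{L^\infty(\Omega_{R/2}(x_0))}\leq C\mathbf{M}_1\leq C\mathbf{M}_1(\rho/R)^{\alpha_1}$ via \eqref{ineq:bdry:lip}. For $\rho\leq R/16$ I start from the triangle inequality on $|\nabla u(x)-\nabla u(y)|^{\gamma_0}$, bound the cross term $|\mathbf{q}_{x,\rho}-\mathbf{q}_{y,\rho}|^{\gamma_0}$ by introducing an auxiliary variable $z$, and average. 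In the interior proof the natural averaging domain is $B_\rho(x)\cap B_\rho(y)$, but near the boundary this intersection can have small measure, so instead I average over $\Omega_\rho(x)\subset\Omega_{2\rho}(y)$, use the doubling comparison $|\Omega_{2\rho}(y)|\leq C|\Omega_\rho(x)|$ from \eqref{volume}, and split $|\nabla u(z)-\mathbf{q}_{y,\rho}|^{\gamma_0}\leq 2^{\gamma_0}|\nabla u(z)-\mathbf{q}_{y,2\rho}|^{\gamma_0}+2^{\gamma_0}|\mathbf{q}_{y,2\rho}-\mathbf{q}_{y,\rho}|^{\gamma_0}$ to obtain $|\mathbf{q}_{x,\rho}-\mathbf{q}_{y,\rho}|\leq C(\phi(x,\rho)+\phi(y,2\rho))$. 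Combining this with \eqref{ineq:diff2} applied at $x$ and $y$ gives
\[
|\nabla u(x)-\nabla u(y)|\leq C\sup_{z\in\Omega_{R/4}(x_0)}\sum_{j=0}^{\infty}\phi(z,\varepsilon^{j}\rho),
\]
after absorbing the harmless factor of $2$ into the summation via Lemma \ref{tech2}(i).

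For each $z\in\Omega_{R/4}(x_0)$, applying \eqref{sum4:phi} with $r=R/8$ is legitimate because $B_{2r}(z)\subset B_{R/2}(x_0)\subset B_R(x_0)$. The norm $\|\nabla u\|_{L^\infty(\Omega_{2r}(z))}+s$ on the right-hand side is bounded by $C\mathbf{M}_1$ through \eqref{ineq:bdry:lip} on $\Omega_{R/2}(x_0)$, and the initial $\phi(z,r)$-term is absorbed similarly via Hölder's inequality. Using $\alpha_1<\alpha_2$ lets me replace $(\rho/R)^{\alpha_2}$ by $(\rho/R)^{\alpha_1}$. What remains is to identify the quantities $\Breve\omega_1,\Breve g_1,\Breve h_1$ with the truncated potentials of \eqref{def:tilde2}: the identity $\Breve\omega_1=\Tilde\omega_1$ is already part of \eqref{ineq:breve}, and for $\Breve g_1,\Breve h_1$ I mimic the interior computation leading to \eqref{g}-\eqref{h}, splitting the index sum according to whether $\varepsilon^{-i}t\leq R/2$, interchanging summation and integration by Fubini, and changing variables $s=\varepsilon^{-i}t$. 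The bulk range of the split delivers exactly $\Tilde{\mathbf{I}}_1^\rho(|\mu|)(z)$ and $\Tilde{\mathbf{W}}_{1/p,p}^\rho(|\mu|)(z)$, while the tail sums telescope into a geometric series bounded by $C(\rho/R)^{\alpha_1}\mathbf{M}_1$, absorbable into the leading term.

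The only genuine subtlety is the volume-comparison trick in the second paragraph, which bypasses the need for a precise measure estimate on $\Omega_\rho(x)\cap\Omega_\rho(y)$; the rest is systematic transplantation of the interior argument using the boundary machinery developed in Section \ref{sec5}. Taking the supremum in $z\in\Omega_{R/4}(x_0)$ of each potential contribution yields the desired bound \eqref{cont:glob}.
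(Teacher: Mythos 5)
Your proposal is correct and follows essentially the same route as the paper: boundary analogues of \eqref{sum1:phi}, \eqref{ineq:int:lip}, \eqref{ineq:diff} replaced by \eqref{sum4:phi}, \eqref{ineq:bdry:lip}, \eqref{ineq:diff2}, with the inclusion $\Omega_\rho(x)\subset\Omega_{2\rho}(y)$ and \eqref{volume} used to average over the asymmetric domain $\Omega_\rho(x)$. The paper streamlines your cross-term computation by writing $\mathbf{q}_{y,2\rho}$ directly in the initial triangle inequality rather than passing through $\mathbf{q}_{y,\rho}$, but this is a cosmetic simplification, not a different argument.
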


\begin{proof}
For any $x,y\in \Omega_{R/4}(x_0)$ being Lebesgue points of $\nabla u$ and $\rho>0$,
\begin{align*}
    &|\nabla u(x)-\nabla u(y)|^{\gamma_0}\\
    &\leq |\nabla u(x)-\mathbf{q}_{x,\rho}|^{\gamma_0}+|\nabla u(y)-\mathbf{q}_{y,2\rho}|^{\gamma_0}+|\mathbf{q}_{x,\rho}-\mathbf{q}_{y,2\rho}|^{\gamma_0}\\
    &\leq |\nabla u(x)-\mathbf{q}_{x,\rho}|^{\gamma_0}+ |\nabla u(y)-\mathbf{q}_{y,2\rho}|^{\gamma_0}+|\nabla u(z)-\mathbf{q}_{x,\rho}|^{\gamma_0}+|\nabla u(z)-\mathbf{q}_{y,2\rho}|^{\gamma_0}.
\end{align*}
We set $\rho=|x-y|$, take the average over $z\in \Omega_\rho(x)$, and then take the $\gamma_0$-th root to get
\begin{align*}
    &|\nabla u(x)-\nabla u(y)|\\
    &\leq C\; |\nabla u(x)-\mathbf{q}_{x,\rho}|^{\gamma_0}+C\;|\nabla u(y)-\mathbf{q}_{y,2\rho}|^{\gamma_0}+C\phi(x,\rho)+C\phi(y,2\rho)\\
    &\leq C \sum_{j=0}^\infty \phi(x,\varepsilon^j\rho)+C \sum_{j=0}^\infty \phi(y,2\varepsilon^j \rho)+ C\phi(x,\rho)+C\phi(y,2\rho)\\
    &\leq  C \sup_{y_0\in \Omega_{R/4}(x_0)}\sum_{j=0}^\infty \phi(y_0,2\varepsilon^j\rho),
\end{align*}
where we used the fact that $\Omega_\rho(x)\subset \Omega_{2\rho}(y)$ in the first inequality and (\ref{ineq:diff2}) in the second inequality.

If $\rho< R/16$, by using (\ref{sum4:phi}) with $R/8$ in place of $r$ and the fact that
\begin{align*}
    \Omega_{R/4}(y_0)\subset \Omega_{R/2}(x_0)\quad \forall y_0\,\in \Omega_{R/4}(x_0),
\end{align*}
we obtain
\begin{equation}\label{eq:diff2}
\begin{aligned}
     &|\nabla u(x)-\nabla u(y)|\\
     &\leq  C \left(\frac{\rho}{R}\right)^{\alpha_2} \|\nabla u\|_{L^\infty(\Omega_{R/2}(x_0))}+C \sup_{y_0\in \Omega_{R/4}(x_0)}\int_0^{\rho}\frac{\Breve{h}_1(y_0,t)}{t}\,dt
    \\&\quad+C\left(\|\nabla u\|_{L^\infty(\Omega_{R/2}(x_0))}+s\right)^{2-p}\sup_{y_0\in \Omega_{R/4}(x_0)}\int_0^{\rho}\frac{\Breve{g}_1(y_0,t)}{t}\,dt
    \\&\quad
    +C\left(\|\nabla u\|_{L^\infty(\Omega_{R/2}(x_0))}+s\right)\int_0^{\rho}\frac{\Breve{\omega}_1(t)}{t}\,dt.
\end{aligned}
\end{equation}
Clearly, (\ref{eq:diff2}) still holds when $\rho\geq R/16$.
Using \eqref{ineq:breve} and similar calculations as in the proof of Theorem \ref{thm4.1}, for any $y_0\in \Omega_{R/4}(x_0)$ and $\rho\in (0,R/2)$, we have
\begin{align}
                    \label{g2}
    &\int_0^{\rho}\frac{\Breve{g}_1(y_0,t)}{t}\,dt\leq \Tilde{\mathbf{I}}_1^{\rho}(|\mu|)(y_0)+C\left(\frac{\rho}{R}\right)^{\alpha_1}\frac{|\mu|(B_{R/2}(x_0))}{R^{n-1}},\\
\label{h2}
    &\int_0^\rho\frac{\Breve{h}_1(y_0,t)}{t}\,dt\leq \Tilde{\mathbf{W}}_{1/p,p}^{\rho}(|\mu|)(y_0)+C\left(\frac{\rho}{R}\right)^{\alpha_1}\left(\frac{|\mu|(B_{R/2}(x_0))}{R^{n-1}}\right)^\frac{1}{p-1}.
\end{align}
Using \eqref{ineq:bdry:lip}, \eqref{g2}, and \eqref{h2}, \eqref{eq:diff2} implies \eqref{cont:glob}. The theorem is proved.
\end{proof}

\section*{Acknowledgements}
The authors would like to thank Quoc-Hung Nguyen for helpful comments and pointing out a gap in an earlier version of the manuscript.

\bibliographystyle{plain}

\end{document}